\renewcommand{\thefootnote}{} 
   \def\MR#1{}
\theoremstyle{plain} 
\newtheorem{theorem}{\indent\sc Theorem}[section]
\newtheorem{lemma}[theorem]{\indent\sc Lemma}
\newtheorem{corollary}[theorem]{\indent\sc Corollary}
\newtheorem{proposition}[theorem]{\indent\sc Proposition}
\newtheorem{theoremIntro}{\indent\sc Theorem}
\newtheorem{corollaryIntro}[theoremIntro]{\indent\sc Corollary}
\theoremstyle{definition} 
\newtheorem{definition}[theorem]{\indent\sc Definition}
\newtheorem{remark}[theorem]{\indent\sc Remark}
\newtheorem{example}[theorem]{\indent\sc Example}
\crefname{theorem}{Theorem}{Theorems}
\crefname{lemma}{Lemma}{Lemmata}
\crefname{corollary}{Corollary}{Corollaries}
\crefname{proposition}{Proposition}{Propositions}
\crefname{remark}{Remark}{Remarks}
\crefname{example}{Example}{Examples}
\crefname{section}{Section}{Sections}
\crefname{definition}{Definition}{Definition}
\crefname{equation}{}{}
\crefname{claim}{Claim}{Claims}
\crefname{theoremIntro}{Theorem}{Theorems}
\crefname{corollaryIntro}{Corollary}{Corollaries}
\newcommand{\C}{\mathbb{C}}
\newcommand{\R}{\mathbb{R}}
\newcommand{\Z}{\mathbb{Z}}
\newcommand{\N}{\mathbb{N}}
\newcommand{\Hyp}{\mathbb{H}}
\def\dim{\mathop{\mathrm{dim}}\nolimits}
\newcommand{\abs}[1]{\lvert#1\rvert}
\newcommand{\Horo}{\mathcal{H}}
\newcommand{\EG}{\underline{E}G}
\newcommand{\EP}{\underline{E}P}
\newcommand{\Xaug}{X(G,\mathbb{P},d_G)}
\newcommand{\Vaug}{V(G,\mathbb{P},d_G)}
\newcommand{\VaugIR}[1]{V(G,\mathbb{P},d_G;#1^R)}
\newcommand{\VaugIr}[2]{V(G,\mathbb{P},d_G;#1^{#2})}
\newcommand{\barEG}{\overline{\EG}}
\newcommand{\famP}{\mathbb{P}}
\newcommand{\dGP}{\partial (G,\famP)}
\newcommand{\grad}{\mathbf{d}}
\newcommand{\dX}{\partial X}
\newcommand{\dG}{\partial^{\mathrm{bl}} G}
\newcommand{\ciso}{\rho}
\newcommand{\XbarW}{\overline{X}^W}
\newcommand{\cd}[2]{\mathop{\mathrm{cd}}_{#1}(#2)}
\newcommand{\distG}{d_G}
\newcommand{\BM}{$\mathbf{BM}$}
\newcommand{\fBM}{$\mathbf{f}$-$\mathbf{BM}$}
\newcommand{\EN}{EN}
\newcommand{\RDc}{\Rip^{\mathrm{co}}}
\newcommand{\RDp}{\Rip^{\mathrm{pa}}}
\newcommand{\FD}{\mathcal{F}}
\newcommand{\Rip}{R_{D}}
\newcommand{\ERX}{E_{\mathcal{H}}X}
\newcommand{\barERX}{\overline{\ERX}}
\newcommand{\Aut}{\mathrm{Aut}}
\newcommand{\calZ}{$\mathcal{Z}$}
\newcounter{thmenumi}
\def\address#1#2{\begingroup
\noindent\parbox[t]{7.8cm}{%
\small{\scshape\ignorespaces#1}\par\vskip1ex
\noindent\small{\itshape E-mail address}%
\/: #2\par\vskip4ex}\hfill%
\endgroup}%
\title{\uppercase{Blown-up corona of relatively hyperbolic groups}}
\author{
%
%
\textsc{Tomohiro Fukaya} 
}
\date{} 
\begin{document}


\thanks{T.Fukaya was supported by JSPS KAKENHI Grant Number JP19K03471}
\renewcommand{\thefootnote}{\fnsymbol{footnote}} 


\begin{abstract}
We show that under appropriate assumptions, a blown-up corona of a
relatively hyperbolic group is equivariant and the compactification of
the universal space for proper action by the blown-up corona is
contractible. As a corollary, we establish the formula to determine the
covering dimension of the blown-up corona by the cohomological dimension
of the group.
We also show
that the blown-up corona of a hyperbolic group with respect to an almost
malnormal family of quasiconvex subgroups is homeomorphic to the Gromov
boundary of the group.
\end{abstract}

\maketitle

\section{Introduction}
\label{sec:introduction}
Let $G$ be a hyperbolic group. There exists a $G$-finite simplicial
complex $\EG$ which is a universal space for proper action \cite[Theorem
4.12.]{MR1086648}. Let $\barEG$ be the Gromov compactification of
$\EG$. Bestvina and Mess~\cite{MR1096169} showed that $\overline{\EG}$ is
contractible. 

\begin{theorem}$($\cite[Theorem 1.2.]{MR1096169}$)$
\label{thm:BM-hypgr} Let $G$ be a hyperbolic group, and let $\EG$ be a
$G$-finite simplicial complex which is a universal space for proper
action. Then the Gromov compactification $\barEG$ is absolute retract (AR),
and the boundary $\partial G = \barEG\setminus \EG$ is a
$\mathcal{Z}$-set. Especially, $\barEG$ is contractible.
\end{theorem}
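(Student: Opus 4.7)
The plan is to establish $\barEG$ as a \calZ-compactification of $\EG$, from which both the AR property and the \calZ-set claim for $\partial G$ follow as structural corollaries. To set the stage, I fix a basepoint $x_0 \in \EG$ and consider the filtration by closed metric balls $B_R = \{x \in \EG : d(x,x_0) \leq R\}$. Using the Rips complex model of $\EG$, the space is finite-dimensional, locally finite, and uniformly locally contractible; the Gromov boundary $\partial G$ also has finite covering dimension, so $\barEG$ is a finite-dimensional compact metrizable space. This reduces the problem to showing that $\barEG$ is a contractible ANR and that $\partial G$ is a \calZ-set.

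The central technical step, and the main obstacle, is a uniform connectivity estimate driven by $\delta$-hyperbolicity: for every integer $n$ there exists $N = N(n) \geq 0$ such that for every $R \geq 0$, every continuous map $S^k \to \EG \setminus B_{R+N}$ with $k \leq n$ extends to a map $D^{k+1} \to \EG \setminus B_R$. The idea is inductive on $k$: the base case uses that two points far from $x_0$ can be joined by a path staying far from $x_0$, which follows from the thin triangle condition applied to geodesics back to $x_0$; the inductive step cones a $k$-sphere against a suitably chosen deep point and applies the inductive hypothesis simplex by simplex to tame the cone. Threading the additive constants through the dimension of $\EG$ and ensuring the estimate is uniform in $R$ is the delicate part of the argument.

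This estimate translates, via the Gromov topology on $\barEG$, into the statement that every boundary point admits arbitrarily small neighborhoods that are $n$-connected inside slightly larger neighborhoods, uniformly in the point and in $n$. Combined with finite dimensionality and the fact that $\EG$ is itself an ANR (being a locally finite simplicial complex), Hanner's criterion identifies $\barEG$ as an ANR. The same connectivity estimate produces, for every $\epsilon > 0$, a continuous self-map of $\barEG$ into $\EG$ that is $\epsilon$-close to the identity, obtained by ``radially'' pushing boundary points inward along geodesic rays; this is precisely the \calZ-set condition on $\partial G$, and it also lets the contraction of $\EG$ (which exists since $\EG$ is a universal space for proper actions) extend continuously across the boundary. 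A contractible finite-dimensional compact ANR is an AR, completing the proof.
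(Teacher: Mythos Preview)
Your outline is correct and is essentially the original Bestvina--Mess argument. Note, however, that the paper does not supply its own proof of this statement: Theorem~\ref{thm:BM-hypgr} is quoted directly from \cite[Theorem~1.2]{MR1096169}, and the paper's only engagement with its proof is to abstract the key connectivity estimate into the ``condition \BM{}'' of Definition~\ref{def:BMconditions} and to invoke \cite[Proposition~2.1]{MR1096169} as the black box behind Proposition~\ref{prop:Z-set}. Your uniform $n$-connectedness estimate for complements of balls is exactly what the paper calls local-\BM{} at each boundary point, and your passage from that estimate to ANR $+$ $\mathcal{Z}$-set is the content the paper records as Proposition~\ref{prop:Z-set} and Corollary~\ref{cor:contractible}. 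One minor wording issue: you do not need to ``extend the contraction of $\EG$ across the boundary''; once $\partial G$ is a $\mathcal{Z}$-set the inclusion $\EG\hookrightarrow\barEG$ is already a homotopy equivalence, so contractibility of $\barEG$ is immediate, and a contractible ANR is an AR.
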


Here we recall that a closed subset $Z$ of an ANR space $\bar{X}$ is
called a {\itshape $\mathcal{Z}$-set} if for every
open set $U\subset \bar{X}$, the inclusion $U\setminus Z \subset U$ is a
homotopy equivalence.



In this paper, we show an analogue result of Theorem~\ref{thm:BM-hypgr}
for relatively hyperbolic groups and blown-up corona constructed in
\cite{boundary}.

First, on the existence of the classifying space, the following result
is known.

\begin{theorem}$($\cite[Appendix A.]{relhypgrp}$)$
 Let $G$ be a finitely generated group which is
 hyperbolic relative to a finite family of infinite subgroups
 $\famP=\{P_1,\dots,P_k\}$.  We suppose that each subgroup $P_i$ admits
 a finite $P_i$-simplicial complex $\EP_i$ which is a universal space
 for proper actions. Then $G$ admits a finite $G$-simplicial complex
 $\EG$ which is a universal space for proper actions.
\end{theorem}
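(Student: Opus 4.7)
The plan is to construct $\EG$ by first producing a $G$-finite contractible simplicial complex $Y$ on which $G$ acts with finite stabilizers away from a controlled family of peripheral subcomplexes, and then surgically replacing each peripheral piece by an equivariant copy of the appropriate $\EP_i$.

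For the first step I would exploit the relative hyperbolicity of $(G,\famP)$ to produce a $\delta$-hyperbolic proper space on which $G$ acts isometrically, cocompactly modulo the horoballs associated to the cosets $gP_i$: concretely, either the Groves--Manning cusped space or Bowditch's fine hyperbolic complex. Taking a sufficiently large Rips complex of this space yields a $G$-finite contractible simplicial complex $Y$ whose cell stabilizers are finite except along $P_i$-invariant subcomplexes $Y_{g,i}\subset Y$, one for each coset $gP_i$, with stabilizer the conjugate $gP_ig^{-1}$.

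Next I would choose, equivariantly over $G$-orbits of cosets, a $P_i$-equivariant simplicial map $\EP_i \to Y_{e,i}$ (which exists because $Y_{e,i}$ is nonempty and $\EP_i$ is the universal proper $P_i$-space), form the mapping cylinder, and glue all $G$-translates of these cylinders into $Y$ to obtain $\EG$. Finiteness of the $G$-quotient is preserved because there are only finitely many $G$-orbits of cosets $gP_i$ and each $\EP_i$ is $P_i$-finite by hypothesis; finiteness of cell stabilizers now holds everywhere, since on the glued cylinders it follows from the proper-action property of $\EP_i$.

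The main obstacle is verifying the universal property for proper actions, which requires contractibility of the fixed-point set $\EG^H$ for every finite subgroup $H\leq G$, not merely contractibility of $\EG$ itself. Here I would invoke the structure theory of relatively hyperbolic groups: any finite subgroup $H$ is either conjugate into some $P_i$, in which case its fixed set sits essentially inside the corresponding $\EP_i$ (which is contractible by the universal property assumed for $P_i$), or it fixes a cell in $Y$ after subdivision, so $Y^H$ is contractible by the standard Rips-complex argument for hyperbolic actions. A Mayer--Vietoris / nerve-of-cover argument applied to the decomposition $\EG = Y \cup \bigcup_{g,i}(\EP_i \times [0,1])$ then reduces contractibility of $\EG^H$ to contractibility of $Y^H$ and of the relevant $\EP_i^H$, both of which are available. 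This last step, checking that the cover is suitably acyclic on $H$-fixed sets and that the combinatorics of the attachment does not obstruct the gluing, is where the real work lies.
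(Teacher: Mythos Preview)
Your strategy---Rips complex of a hyperbolic model plus mapping-cylinder surgery with the $\EP_i$---is precisely the construction the paper recalls (from the cited reference) in Sections~\ref{sec:rips-compl-augm}--\ref{sec:projective-limit}, but two details are off.

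First, the universal property of $\EP_i$ makes it \emph{terminal} among proper $P_i$-complexes: one obtains $P_i$-maps \emph{into} $\EP_i$, not out of it, so ``$Y_{e,i}$ nonempty and $\EP_i$ universal'' does not by itself produce a map $\EP_i\to Y_{e,i}$. The paper runs the map the other way, using a $P_i$-homotopy equivalence $f_P\colon \Rip^{(n)}(V(P,\{P\},d_P))_r \to \EP$ (see~\eqref{eqdef:f_P}) and gluing its mapping cylinder along the Rips-complex end.

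Second, the Rips complex of the full Groves--Manning cusped space is not $G$-finite: the horoballs extend infinitely in depth and $G$ does not act cocompactly there. (In that model all cell stabilizers are already finite---it is cocompactness that fails, not properness, contrary to your description of $Y$.) The paper truncates each horoball at depth $R$ to obtain the $G$-finite complex $\Rip(\Vaug)^R$ before gluing. With these two fixes your outline coincides with the paper's construction, and the fixed-point-set verification you sketch is indeed what the cited reference carries out.
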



Bestvina \cite{Bestv-local-homo-bdry} introduced the \calZ-structures on groups.
The following definition is due to Dranishnikov\cite{Dra-BM-formula}.
\begin{definition}
 Let $G$ be a group. A \calZ-structure on $G$ is a pair $(\bar{X},Z)$
 of compact spaces satisfying the following axioms.
 \begin{enumerate}[label=(\arabic*)]
  \item $\bar{X}$ is AR.
  \item $Z$ is a Z-set in $\bar{X}$.
  \item $X=\bar{X}\setminus Z$ admits a metric that admits a proper cocompact action
        of $G$ by isometries.
  \item The compactification $\bar{X}$ of $X$ is a coarse compactification.
 \end{enumerate}
\end{definition}

We show that, under appropriate assumptions, relatively hyperbolic groups
admits a \calZ-structure.

\begin{theoremIntro}
\label{thmIntro:main-BM} Let $G$ be a finitely generated group that is
 hyperbolic relative to a finite family of infinite subgroups
 $\famP=\{P_1,\dots,P_k\}$.  We suppose that each subgroup $P_i$ admits
 a finite $P_i$-simplicial complex $\EP_i$ which is a universal space
 for proper actions. Let  $\EG$ be a finite $G$-simplicial complex
 which is a universal space for proper actions.

For $i=1,\dots,k$, let $\overline{\EP_i}$ be a coarse compactification
 of $\EP_i$.  Set $W_i\coloneqq\overline{\EP_i} \setminus \EP_i$. Let $\dG$ be
 the blown-up corona of $(G,\famP, \{W_1,\dots, W_k\})$ and let
 $\barEG\coloneqq\EG \cup \dG$ be a coarse compactification.  Then we have,
\begin{enumerate}
 \item \label{item:BM} if each $(\overline{\EP_i},W_i)$ is a \calZ-structure
       of $P_i$, for all $i= 1,\dots,k$, then the pair
       $(\barEG,\dG)$ is a \calZ-structure of $G$, and thus $\dG$ is a
       $\mathcal{Z}$-set in $\barEG$. Especially, $\barEG$ is
       contractible;
 \item \label{item:equiv} if each $\overline{\EP_i}$ is a
       $P_i$-equivariant compactification, for all $i= 1,\dots,k$, then
       $\barEG$ is a $G$-equivariant compactification.
       \setcounter{thmenumi}{\value{enumi}}
\end{enumerate}
\end{theoremIntro}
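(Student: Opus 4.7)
The plan is to exploit the explicit description of $\dG$ from \cite{boundary}: it is obtained from a Bowditch-type boundary by replacing each parabolic fixed point $\xi$ (with stabilizer conjugate to some $P_i$) by the corresponding compact set $W_i$, and in a neighborhood of this replaced point $\barEG$ restricts to the compactification $\overline{\EP_i}$ of a horoball at $\xi$. Thus the problem splits naturally into a ``hyperbolic part'' (handled by a Bestvina--Mess style argument for relatively hyperbolic groups) and ``horoball parts'' (handled using the given $\mathcal{Z}$-structures on $(\overline{\EP_i},W_i)$).

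For part \eqref{item:equiv}, the $G$-action on $\EG$ extends naturally to the Bowditch portion of $\dG$ from the relatively hyperbolic structure, and at each parabolic point $g\xi_i$ its stabilizer $gP_ig^{-1}$ acts on the attached $W_i$ via the given $P_i$-equivariant structure on $\overline{\EP_i}$, transported by $g$. The only real content is continuity of the global action at boundary points, which follows from the coarse compactification property: each $g\in G$ acts on $\EG$ by an isometry, hence extends continuously to any coarse compactification, and continuity at the ``blown-up'' parts reduces to continuity of the $P_i$-action on $\overline{\EP_i}$.

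For part \eqref{item:BM}, I would verify the three remaining axioms of a \calZ-structure. Metrizability and finite dimensionality for $\barEG$ are preserved under the blow-up and follow from the assumptions on $\overline{\EP_i}$ together with the coarse compactification property established in \cite{boundary}. To show $\barEG$ is AR, I would establish local contractibility and contractibility separately. Local contractibility at points of $\EG$ is automatic; at conical-type boundary points one can adapt the Bestvina--Mess argument using relative hyperbolicity; at points of an attached $W_i$ one uses local contractibility of $\overline{\EP_i}$ combined with the local model identification in the cusp region. Global contractibility is obtained by coning off toward a basepoint in $\EG$, using contractibility of $\EG$ and of each $\overline{\EP_i}$ to handle the respective regions. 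The \calZ-set property requires constructing, for every $\varepsilon>0$, a homotopy $h_t\colon \barEG\to \barEG$ with $h_0=\mathrm{id}$, $h_1(\barEG)\subset \EG$, and small tracks; I would piece this together by using the \calZ-set homotopies of the $\overline{\EP_i}$ deep inside each cusp, a Bestvina--Mess style push in the complement of the cusps, and a partition-of-unity interpolation in a collar region between them.

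The main obstacle is the last step: synchronizing the various local \calZ-set homotopies of the $\overline{\EP_i}$ with a global push in the relatively hyperbolic part, while keeping the tracks uniformly short in the topology of $\barEG$. This forces one to choose horoball depths as a function of $\varepsilon$, and to use uniform control of the coarse compactification from \cite{boundary} to guarantee that the interpolation annulus is contained in the $\varepsilon$-neighborhood of each of its points. Once this metric bookkeeping is in place, the conclusion that $\dG$ is a \calZ-set, and hence that $\barEG$ is contractible, follows from the standard characterization of \calZ-sets in ANRs.
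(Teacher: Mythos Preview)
Your outline for part~(\ref{item:equiv}) is essentially correct and matches the paper's strategy: define the action on the Bowditch part by the natural $G$-action on $\dGP$, and on each blown-up piece $g_iW_{(i)}$ by transporting the given $P_{(i)}$-action. However, your continuity argument is too quick. An isometry of $\EG$ extends continuously to the Higson compactification $h\EG$, but $\barEG$ is a \emph{quotient} of $h\EG$, and there is no automatic reason the extension descends. The paper handles this by showing that the canonical surjection $\xi\colon \nu G\to \dG$ is $G$-equivariant (Lemma~\ref{lem:commute}) and then invoking the elementary fact (Lemma~\ref{lem:hX-Y}) that a map out of a compact Hausdorff quotient is continuous once its lift is.

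For part~(\ref{item:BM}) there is a genuine gap. You correctly identify the main obstacle---synchronizing the $\mathcal{Z}$-set homotopies across infinitely many horoballs with a Bestvina--Mess push in the thick part---but you do not resolve it; ``once this metric bookkeeping is in place'' is exactly the content of the theorem. The paper avoids this difficulty altogether by a different route. Rather than building a global $\mathcal{Z}$-set homotopy, it verifies the local condition \BM{} of Definition~\ref{def:BMconditions} (every boundary point has arbitrarily small weakly contractible neighbourhoods), and then invokes the Bestvina--Mess criterion (Proposition~\ref{prop:Z-set}) that \fBM{} implies the $\mathcal{Z}$-set property. This localization decouples the horoballs from one another: no synchronization or partition-of-unity patching is needed.

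The verification of \BM{} is itself organized as a projective limit. First one proves \BM{} for a \emph{single} blow-up $\barERX[i]$ (Proposition~\ref{prop:BM-single}); here the key geometric input is a geodesic-contraction lemma (Lemma~\ref{lem:geodesic-contraction}) in the hyperbolic augmented space, which homotopes any finite complex near $W_i$ into the mapping-cylinder region where the $\EP_{(i)}$-hypothesis applies. Then one shows \BM{} for the space $\barERX_n$ with $n$ points blown up, and finally passes to the projective limit $\barEG=\varprojlim \barERX_n$ (Proposition~\ref{prop:EG-BM}), using that the bonding maps $\theta_n$ and their homotopy inverses $\tau_n$ are close to the identity in the required sense. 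Finite dimensionality of $\barEG$ is checked separately (Lemma~\ref{lem:blown-up-finite-dim}). Your direct approach might be salvageable, but as written it stops precisely where the work begins.
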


\begin{remark}
 Dahmani~\cite{MR1974394} proved Theorem~\ref{thmIntro:main-BM}
 for the case where $k=1$, that is, $G$ is a group hyperbolic relative to
 a single subgroup $P$.
\end{remark}

\begin{remark}
 Recently, Takatsu~\cite{takatsu2023blown} and Kikuta independently 
 proved that for K3 surface $X$, 
 the group of automorphisms $\Aut(X)$ is geometrically finite up to finite kernel.
 Thus $\Aut(X)$ admits a \calZ-structure by \cref{thmIntro:main-BM}.
 Takatsu~\cite{takatsu2023blown} also directly
 constructed a \calZ-structure
 for $\Aut(X)$ using the action on the hyperbolic space $\Hyp^n$.
 His argument is only applicable to the groups
 acting on the hyperbolic space. However, it is much simpler than
 the proof of~\cref{thmIntro:main-BM} given in this paper.
\end{remark}

\subsection{Cohomological dimension}
As an application of Theorem~\ref{thm:BM-hypgr}, Bestvina and Mess
established a formula that determines the covering dimension of the
boundary of a torsion free hyperbolic group by the cohomological
dimension of the group. Theorem~\ref{thmIntro:main-BM} generalizes this formula
for relatively hyperbolic groups.
\begin{corollaryIntro}
\label{thm:main-cohomdim} Let $G$ be a finitely generated
 group that is hyperbolic relative to a finite family of infinite
 subgroups $\famP=\{P_1,\dots, P_k\}$.  We suppose that each subgroup
 $P_i$ admits a finite $P_i$-simplicial complex $\EP_i$ which is a
 universal space for proper actions.  For $i=1,\dots,k$, let
 $\overline{\EP_i}$ be a coarse compactification of $\EP_i$.  Set
 $W_i\coloneqq\overline{\EP_i} \setminus \EP_i$.  Let $\dG$ be the
 blown-up corona of $(G,\famP, \{W_1,\dots, W_k\})$. 
Then we have
\begin{align}
\label{eqIntro:cohom-cong}
 H^p (G;\Z G) \cong \widetilde{H}^{p -1}(\dG;\Z)\quad  \text{for all} \quad p\in\N.
\end{align}
Here $\widetilde{H}^\bullet(-)$ denotes the reduced Alexander-Spanier
cohomology.

 We also suppose that 
 $(\overline{\EP_i},W_i)$ is a \calZ-structure of $P_i$
 for all $1\leq i \leq k$. Then we have
\begin{align}
\label{eqIntro:cohomdim}
 \dim(\dG) = \max\{p:H^p(G;\Z G)\neq 0\} -1.
\end{align}
If $G$ is torsion free, then 
\begin{align*}
 \cd{}{G} = \dim(\dG) +1
\end{align*}
where $\cd{}{G}$ notes the cohomological dimension of $G$.
\end{corollaryIntro}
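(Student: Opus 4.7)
The plan is to reduce~\cref{thm:main-cohomdim} to~\cref{thmIntro:main-BM} and then follow the Bestvina--Mess strategy from~\cite{MR1096169}. Applying that theorem, the pair $(\barEG,\dG)$ is a \calZ-structure on $G$: so $\barEG$ is a compact AR, the corona $\dG$ is a $\mathcal{Z}$-set in $\barEG$, and $G$ acts properly cocompactly on the finite simplicial complex $\EG$. This is the only geometric input I would need; the rest is a combination of homological algebra and standard dimension theory for compacta.

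To prove~\eqref{eqIntro:cohom-cong}, I would proceed in three steps. First, since $\EG$ is a finite cocompact $G$-complex which is contractible, the usual identification yields
\begin{align*}
H^p(G;\Z G)\cong H^p_c(\EG;\Z),
\end{align*}
where $H^*_c$ denotes cohomology with compact supports. Second, since $\dG$ is a $\mathcal{Z}$-set in the compact AR $\barEG$, the inclusion $\EG\hookrightarrow \barEG$ behaves cohomologically like a good compactification and one obtains $H^p_c(\EG;\Z)\cong H^p(\barEG,\dG;\Z)$, where the cohomology of the compact pair is taken in the sense of Alexander--Spanier (which coincides with \v{C}ech cohomology for compact metric pairs). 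Third, since $\barEG$ is AR and hence contractible, the long exact sequence of the pair $(\barEG,\dG)$ collapses to give $H^p(\barEG,\dG;\Z)\cong \widetilde{H}^{p-1}(\dG;\Z)$ for every $p\in\N$, the use of reduced cohomology handling the case $p=1$.

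For~\eqref{eqIntro:cohomdim}, the key external input is the Bestvina--Mess dimension theorem, which asserts that a $\mathcal{Z}$-set $Z$ in a compact AR satisfies
\begin{align*}
\dim Z = \max\{q:\widetilde{H}^q(Z;\Z)\neq 0\}.
\end{align*}
Applying this with $Z=\dG$ and inserting the isomorphism~\eqref{eqIntro:cohom-cong} gives the dimension formula. When $G$ is torsion free, properness of the $G$-action on $\EG$ forces freeness, so $\EG/G$ is a finite aspherical $K(G,1)$; in particular $\cd{}{G}<\infty$, and Brown's formula then gives $\cd{}{G}=\max\{p:H^p(G;\Z G)\neq 0\}$. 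Combining with the dimension formula yields $\cd{}{G}=\dim(\dG)+1$.

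The main obstacle is the second step of the cohomology identification, together with the Bestvina--Mess dimension identity: both are non-trivial statements about compact metric pairs and rely essentially on the $\mathcal{Z}$-set hypothesis that~\cref{thmIntro:main-BM} supplies. Once these facts are quoted from~\cite{MR1096169}, the remaining work is a routine assembly of long exact sequences and Brown's theorem, with no new input specific to relative hyperbolicity beyond what Theorem~A already encodes.
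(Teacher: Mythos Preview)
Your argument for the dimension formula~\eqref{eqIntro:cohomdim} and the torsion-free identity is essentially the same as the paper's: invoke Theorem~\ref{thmIntro:main-BM} to get a \calZ-structure, apply the Dranishnikov/Bestvina--Mess dimension identity (the paper records this as Theorem~\ref{thm:BM-implies-cohomdim}, citing Dranishnikov), and then use Brown's formula for groups of type FL.

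There is, however, a genuine gap in your derivation of the isomorphism~\eqref{eqIntro:cohom-cong}. The corollary asserts that isomorphism under the \emph{weaker} hypotheses of the first paragraph --- only that each $\overline{\EP_i}$ is a coarse compactification --- and introduces the \calZ-structure assumption on the peripherals only afterwards, for~\eqref{eqIntro:cohomdim}. Your three-step argument, by contrast, already needs Theorem~\ref{thmIntro:main-BM}(\ref{item:BM}) at step two (to know $\dG$ is a \calZ-set) and at step three (to know $\barEG$ is AR, hence contractible). So you have only proved~\eqref{eqIntro:cohom-cong} under the stronger hypotheses, not as stated.

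The paper avoids this by taking a different route for~\eqref{eqIntro:cohom-cong}: it passes through coarse cohomology, using $H^p(G;\Z G)\cong HX^p(G;\Z)$ from Roe's book and the isomorphism $HX^p(G;\Z)\cong \widetilde{H}^{p-1}(\dG;\Z)$ from \cite[Theorem~1.2]{boundary}, which needs only that $\dG$ is a corona, not a \calZ-set. This is what lets the isomorphism stand free of the \calZ-structure hypothesis. Your Bestvina--Mess style argument is a perfectly good alternative proof of~\eqref{eqIntro:cohom-cong} once the \calZ-structure is in hand, but it does not recover the corollary in the generality claimed.
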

In the case when $\famP$ is empty, the formula~(\ref{eqIntro:cohomdim})
is the results of Bestvina-Mess~\cite{MR1096169}.

\subsection{Blown-up corona of a hyperbolic group with an almost malnormal family of quasiconvex subgroups}

Let $G$ be a group. A finite family $\famP\coloneqq\{P_1,\dots,P_k\}$ of
subgroups of $G$ is said to be {\itshape almost malnormal} if every
infinite intersection of the form $H_i\cap g^{-1}H_jg$ satisfies both
$i=j$ and $g\in H_i$. Bowditch proved the following.
\begin{theorem}
\label{th:hyp-is-relhyp}
 \cite[Theorem 7.11]{MR2922380} Let $G$ be a non-elementary hyperbolic
 group, and let $\famP=\{P_1,\dots, P_k\}$ be an almost malnormal family of
 proper, quasiconvex subgroups of $G$. Then $G$ is hyperbolic relative to
 $\famP$.
\end{theorem}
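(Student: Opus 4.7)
The plan is to verify Bowditch's dynamical characterization of relative hyperbolicity: $G$ is hyperbolic relative to $\famP$ if and only if $G$ admits a convergence action on a non-empty perfect compact metrizable space $M$ such that every point of $M$ is either a conical limit point or a bounded parabolic point, and the maximal parabolic subgroups are precisely the conjugates of the $P_i$. I would construct $M$ as a quotient of the Gromov boundary $\partial G$.

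First, since each $P_i$ is infinite and quasiconvex in the hyperbolic group $G$, it has a well-defined limit set $\Lambda P_i \subset \partial G$, on which $P_i$ acts as a discrete convergence group with $\Lambda P_i$ as its full limit set. The key algebraic input is almost malnormality: for any two cosets with $(i,gP_i)\neq (j,g'P_j)$, the intersection $gP_ig^{-1}\cap g'P_jg'^{-1}$ is finite, so using the identity $\Lambda(H\cap K)=\Lambda H\cap \Lambda K$ for quasiconvex subgroups of a hyperbolic group (a theorem of Gitik--Mitra--Rips--Sageev), the limit sets $\Lambda(gP_ig^{-1})$ and $\Lambda(g'P_jg'^{-1})$ are disjoint. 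Now define $M$ by collapsing each translate $\Lambda(gP_ig^{-1})$ to a single point; disjointness guarantees that the equivalence classes are precisely the singletons of non-limit-set points together with the translated limit sets. Compactness is automatic, while Hausdorffness and metrizability of $M$ reduce to showing that the family of translated limit sets is \emph{null} with respect to a visual metric on $\partial G$---only finitely many have diameter above any given threshold---which follows from the proper cocompact action of $G$ on its Cayley graph together with the finite index bounds provided by almost malnormality.

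The technical heart, which I expect to be the main obstacle, is verifying the two dynamical conditions on the induced action $G\curvearrowright M$. First, each collapsed point $[\Lambda(gP_ig^{-1})]$ must be identified as a bounded parabolic point with maximal parabolic stabilizer exactly $gP_ig^{-1}$: the stabilizer identification uses almost malnormality, while boundedness of the parabolic point comes from the cocompact action of $P_i$ on $\Lambda P_i$ minus a point, itself a consequence of quasiconvexity. Second, every uncollapsed point of $M$ must be shown to be a conical limit point; for $\xi\in \partial G$ not lying in any translated limit set, this amounts to producing a geodesic ray in the Cayley graph tending to $\xi$ which remains a bounded distance from the $G$-orbit of the identity while avoiding larger and larger neighborhoods of the cosets $gP_i$, using quasiconvexity to prevent accumulation towards the parabolic limit sets. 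Once the convergence property (which descends from $\partial G$ via nullness of the collapsed family) and these two conditions are in place, Bowditch's dynamical criterion yields relative hyperbolicity of $G$ with respect to $\famP$.
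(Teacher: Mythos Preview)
The paper does not prove this theorem; it is quoted verbatim from Bowditch \cite[Theorem 7.11]{MR2922380} as background, and the paper explicitly notes that Manning~\cite{Manning-Bowditch-boundary} gave an alternative proof via the dynamical characterization. Your proposal is precisely Manning's argument: collapse the translated limit sets in $\partial G$ to obtain the Bowditch boundary as a decomposition space, verify nullness, and check that the resulting action is a geometrically finite convergence action. So there is nothing to compare against in this paper---your outline is correct and coincides with the alternative proof the paper cites but does not reproduce.

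One small comment on your sketch: the conical limit point verification for uncollapsed $\xi$ does not require the geodesic to ``avoid larger and larger neighborhoods of the cosets $gP_i$''; rather, conicality of $\xi$ in $\partial G$ passes to the quotient $M$ because the collapsing map is injective on a neighbourhood of $\xi$ (by nullness and the fact that $\xi$ lies in no $\Lambda(gP_ig^{-1})$). The avoidance picture you describe is closer to what one would need to show $\xi$ remains conical in the \emph{coned-off} Cayley graph, which is a different (equivalent) formulation.
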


Manning~\cite{Manning-Bowditch-boundary} gave an alternative proof of
Theorem~\ref{th:hyp-is-relhyp} in terms of the dynamical
characterization of relative hyperbolicity. The main ingredient of his
proof is the explicit description of its Bowditch boundary.
%
%

\begin{theorem}[\cite{Manning-Bowditch-boundary}]
\label{th:G-to-B-full}
 Let $G$ be a hyperbolic group, and let $\famP$ be an almost malnormal
 family of infinite quasiconvex proper subgroups of $G$. Let
 $\mathcal{L}$ be the set of $G$-translates of limit set of elements of
 $\famP$. The Bowditch boundary $\dGP$ is obtained from the Gromov
 boundary $\partial G$ as a decomposition space $\partial G/\mathcal{L}$.
\end{theorem}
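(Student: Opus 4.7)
My plan is to use the dynamical characterization of relative hyperbolicity. Bowditch showed that a compactum on which $G$ acts as a geometrically finite convergence group with peripheral stabilizers equal to $\famP$ is unique up to $G$-equivariant homeomorphism, and equals $\dGP$. Therefore it suffices to equip the decomposition space $M \coloneqq \partial G/\mathcal{L}$ with a geometrically finite convergence action of $G$ whose maximal parabolic subgroups are exactly the conjugates of the $P_i$. The input from \cref{th:hyp-is-relhyp} is that $\dGP$ exists at all; the content of the theorem is the explicit identification.

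First I would verify that $\sim$ (the relation identifying the points of each $L\in\mathcal{L}$) is a closed equivalence relation, so that $M$ is a compact metrizable Hausdorff space. Because each $P_i$ is quasiconvex in the hyperbolic group $G$, the limit set $\Lambda(P_i) \subset \partial G$ is $P_i$-equivariantly homeomorphic to $\partial P_i$, and $\Lambda(gP_ig^{-1}) = g\Lambda(P_i)$. The almost malnormality hypothesis forces $\Lambda(P_i) \cap g\Lambda(P_j)$ to be finite whenever $gP_jg^{-1}\neq P_i$; combined with the quasiconvex intersection lemma one deduces that any two distinct members of $\mathcal{L}$ meet in at most one or two points, which must themselves lie in infinitely many members (hence one checks transitivity forces equality). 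This also yields the crucial \textbf{null condition}: for every $\varepsilon>0$, only finitely many $L\in\mathcal{L}$ have diameter $\geq \varepsilon$ with respect to a visual metric on $\partial G$. Upper semi-continuity of the decomposition, and hence Hausdorffness of $M$, then follows by a standard argument.

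Next I would promote the $G$-action on $\partial G$ to a $G$-action on $M$, which is automatic since $\mathcal{L}$ is $G$-invariant. The convergence property is inherited from $\partial G$ together with the null condition: given a sequence $g_n\to \infty$ in $G$, pass to a subsequence so that on $\partial G$ the $g_n$ converge uniformly on compacta of $\partial G\setminus\{a\}$ to a constant $b$; the images $[a],[b]$ in $M$ then realize the convergence property, since any ``large'' piece escaping to $b$ either lies in a single $L\in\mathcal{L}$ or, by the null condition, shrinks to a point. Geometric finiteness is verified by identifying two kinds of limit points in $M$: the conical limit points, coming from conical limit points of $\partial G$ that are not identified with any point of any $L\in\mathcal{L}$, and the bounded parabolic points, which are precisely the classes $[L]$ for $L = g\Lambda(P_i)\in\mathcal{L}$; the stabilizer of $[\Lambda(P_i)]$ is $P_i$ itself by almost malnormality, and acts cocompactly on $M\setminus\{[\Lambda(P_i)]\}$ because $P_i$ acts cocompactly on $\partial P_i\setminus\{\text{a point}\}$ after suitable quotienting.

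The main obstacle I expect is the null condition and the attendant verification that the quotient is Hausdorff; this is where the interaction of quasiconvexity with almost malnormality must be exploited most carefully, and where one must rule out accumulation of translates $g\Lambda(P_i)$ of large diameter. Everything else is a matter of carefully transporting the convergence dynamics from $\partial G$ through the quotient map, and then invoking Bowditch's uniqueness theorem to conclude $M \cong \dGP$ $G$-equivariantly.
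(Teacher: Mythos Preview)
The paper does not give its own proof of this statement: it is quoted verbatim from Manning's paper \cite{Manning-Bowditch-boundary} and used as a black box in the proof of \cref{th:B-to-G}. So there is no ``paper's proof'' to compare against beyond Manning's original argument, and your proposal is in fact a sketch of that argument --- verifying the Yaman/Bowditch dynamical characterization on the decomposition space and invoking uniqueness of the Bowditch boundary.

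Your outline is essentially correct, but two points deserve tightening. First, your claim that distinct members of $\mathcal{L}$ ``meet in at most one or two points'' undersells the situation: for quasiconvex subgroups $H,K$ of a hyperbolic group one has $\Lambda(H)\cap\Lambda(K)=\Lambda(H\cap K)$, so almost malnormality forces distinct elements of $\mathcal{L}$ to be \emph{disjoint}; there is no transitivity issue to chase. Second, the bounded-parabolic check is misstated: what you need is that $P_i$ acts cocompactly on $\partial G\setminus\Lambda(P_i)$ (a standard fact for quasiconvex subgroups), which then descends to a cocompact action on $M\setminus\{[\Lambda(P_i)]\}$; the phrase ``$P_i$ acts cocompactly on $\partial P_i\setminus\{\text{a point}\}$'' is not the right statement. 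With these corrections the argument goes through and matches Manning's.
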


Using Manning's description, we obtain the following.
\begin{theoremIntro}
\label{th:B-to-G} Let $G$ be a non-elementary hyperbolic group, and
 $\famP=\{P_1,\dots, P_k\}$ be an almost malnormal family of proper,
 infinite quasiconvex subgroups of $G$. Let $\partial G$ be the Gromov
 boundary of G and let $\partial P_i$ be the limit set of $P_i$ for $i =
 1,\dots,k$.  Let $\dG$ be the blown-up corona of $(G,\famP,\{\partial
 P_1,\dots,\partial P_k\})$. Then $\dG$ is homeomorphic to $\partial G$.
\end{theoremIntro}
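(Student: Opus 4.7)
The plan is to construct an explicit map $\phi: \partial G \to \dG$ using Manning's description of the Bowditch boundary in \cref{th:G-to-B-full}, and then verify that it is a continuous bijection between compact Hausdorff spaces, hence a homeomorphism.

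By construction of the blown-up corona, there is a natural surjection $q: \dG \to \dGP$ that collapses the copy of $\partial P_i$ attached over a parabolic point $gP_i$ back to that point, and by \cref{th:G-to-B-full} the Bowditch boundary equals the decomposition $\partial G / \mathcal{L}$ via the quotient map $\pi: \partial G \to \dGP$. I would define $\phi$ in two pieces: first, on each translate $g\partial P_i \in \mathcal{L}$, send $g\cdot\eta$ (for $\eta\in\partial P_i$) to the point of $\dG$ corresponding to $\eta$ in the blown-up fiber $W_i$ attached over $gP_i$; second, on the complement $\partial G \setminus \bigcup \mathcal{L}$, send each conical limit point $\xi$ to $\pi(\xi)$, viewed as a non-parabolic point of $\dG$. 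Almost malnormality of $\famP$ forces the translates $g\partial P_i$ to be pairwise disjoint whenever the cosets $gP_i$ differ, so $\phi$ is well-defined and bijective.

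The substantive step is continuity of $\phi$. At a conical limit point, continuity is immediate because $\pi$ is continuous and the blown-up topology on $\dG$ agrees with the quotient topology of $\dGP$ off the parabolic fibers. At an interior point of a blown-up fiber over $gP_i$, continuity along sequences staying inside $g\partial P_i$ reduces to continuity of translation by $g$ together with the canonical identification $W_i = \partial P_i$. The delicate case is a sequence $\{\xi_n\}\subset\partial G$ with $\xi_n\notin g\partial P_i$ but $\xi_n\to\xi\in g\partial P_i$: here I would use that geodesic rays representing $\xi_n$ must, by quasi-convexity of $P_i$, travel deeply inside a fixed neighborhood of the coset $g\cdot P_i$ before diverging in a direction converging to $g\cdot\xi$, and this is precisely the horoball-based approach condition defining convergence to $\phi(\xi)$ in the topology of $\dG$.

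Since $\partial G$ is compact and $\dG$ is a Hausdorff coarse corona of a proper metric space, the continuous bijection $\phi$ is automatically a homeomorphism. The main obstacle is the dictionary step: the topology of $\dG$ is formulated in terms of the augmented-space geometry of $(G,\famP)$, whereas convergence inside $\partial G$ is controlled by the ordinary hyperbolic geometry of $G$. Making this comparison precise — translating "a geodesic ray in the Cayley graph of $G$ penetrating a neighborhood of a quasi-convex coset" into "approach to a horoball top in the augmented space" — is the nontrivial technical content that must be extracted from the construction of $\dG$ in \cite{boundary}.
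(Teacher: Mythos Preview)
Your bijection $\phi$ is the right map and the overall shape is sound, but you have correctly identified and then not resolved the only real difficulty. The ``delicate case'' --- sequences in $\partial G$ lying outside a translate $g\partial P_i$ but converging to a point inside it --- is exactly where the work lies, and your paragraph there is an assertion rather than an argument: you would need to unpack the neighbourhood basis of $\dG$ from \cite{boundary} and verify that the quasi-convexity penetration picture matches it, which is a genuine translation between the Cayley-graph geometry of $G$ and the augmented-space geometry of $(G,\famP)$. As written, this is a gap.

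The paper sidesteps this entirely by routing through the Higson compactification. Both $G\cup\partial G$ and $G\cup\dG$ are coarse compactifications of $G$, so each receives a continuous surjection from $hG$ extending the identity on $G$; call these $\eta$ and $\xi_\infty$. The set-theoretic bijection $\varphi\colon G\cup\dG\to G\cup\partial G$ (your $\phi^{-1}$ extended by the identity on $G$) satisfies $\eta=\varphi\circ\xi_\infty$ directly from the explicit description of $\xi_\infty$ in \cite[Section~7.2]{boundary}. Then the elementary \cref{lem:hX-Y} --- a continuous surjection from a compact space onto a Hausdorff space is a quotient map --- gives continuity of $\varphi$ for free, since $\eta$ is already continuous. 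No sequence-chasing, no geometric comparison of the two boundaries.

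What your approach would buy, if the delicate case were carried out, is a self-contained geometric argument independent of the Higson machinery; what the paper's approach buys is that the universal property of the Higson compactification absorbs all continuity checking into a single commutative triangle, reducing the theorem to the one-line verification $\eta=\varphi\circ\xi_\infty$.
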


Theorem~\ref{th:G-to-B-full} says that if $G$ is a hyperbolic group and
$\famP$ is an almost malnormal family of quasiconvex subgroups, then the
Bowditch boundary $\dGP$ is obtained from the Gromov boundary $\partial
G$. Conversely, Theorem~\ref{th:B-to-G} says that the Gromov boundary is
restored from the Bowditch boundary by blowing up all parabolic points.

\subsection{Organisation of the paper}
In Section~\ref{sec:higs-comp}, we review the definition of the Higson
compactification and the coarse compactification.  In
Section~\ref{sec:suff-cond-contr}, we review the coarse structure on
simplicial complexes, and {\itshape \calZ-structure} of groups,
introduced by Bestvina~\cite{Bestv-local-homo-bdry}.

In Section~\ref{sec:relat-hyperb-groups}, we review the definition of
relatively hyperbolic groups in terms of the combinatorial horoballs and
the augmented space introduced by Groves-Manning~\cite{MR2448064}.

In Section~\ref{sec:contractibitiy}, we give a proof of

Theorem~\ref{thmIntro:main-BM}(\ref{item:BM}). In
Section~\ref{sec:cohom-dimens}, we discuss the cohomological dimension
and give a proof of Corollary~\ref{thm:main-cohomdim}. We also give two
examples of explicit calculations of the covering dimension of the
blown-up coronae of relatively hyperbolic groups.

In Section~\ref{sec:hyp-malnormal}, we review Manning's description
of the Bowditch boundary of a hyperbolic group with an 
almost malnormal family of quasiconvex subgroups, and we give a proof of
Theorem~\ref{th:B-to-G}.

In Section~\ref{sec:act-on-corona}, we construct a group action on the
blown-up corona and give a proof of
Theorem~\ref{thmIntro:main-BM}~ (\ref{item:equiv}).

In \cref{sec:table-symbols}, we list symbols used in this paper.

\subsection*{Acknowledgment}
The author would like to thank Shin-ichi Oguni for his carefully reading
the manuscript and several useful comments.

\section{Coarse space and \calZ-structure}
\label{sec:coarse-space-mathc}
\subsection{Coarse compactification}
\label{sec:higs-comp}
A proper coarse space is a locally compact Hausdorff space 
equipped with a proper coarse structure. See \cite[Definition 2.22]{MR2007488}
for proper coarse structure.
Here we recall the definitions of the Higson compactification and coarse
compactification. For details, see \cite{MR2007488} and
\cite{MR1147350}.
\begin{definition}
\label{def:Higson_function}
 Let $X$ be a proper coarse space. Let  $f\colon X\to \C$ be a
 bounded continuous function. We denote by $\grad f$ the function
\[
 \grad f(x,y) = f(y) - f(x) \colon X\times X \to \C.
\]
We say that $f$ is a {\itshape Higson function}, or, of {\itshape
vanishing variation}, if for each controlled set $E$, the restriction of
$\grad f$ to $E$ vanishes at infinity, that is, for any $\epsilon>0$,
there exists a bounded subset $B$ such that for any $(x,y)\in E
\setminus B\times B$, we have $\abs{\grad f(x,y)}<\epsilon$.
\end{definition}

\begin{remark}
 In the case that $X$ is a proper metric space with a bounded coarse structure,
 the bounded continuous function $f\colon X\to \C$ is a Higson function,
 if and only if, for any $\epsilon> 0$ and $R>0$, there exists a compact
 set $K\subset X$ such that for all $(x,y)\in X\times X\setminus K\times
 K$ with $d(x,y)< R$, we have $\abs{\grad f(x,y)}<\epsilon$.
\end{remark}

The bounded continuous complex valued Higson functions on a proper coarse
space $X$ form an unital
$C^*$-subalgebra of bounded continuous functions on $X$, which we
denote $C_h(X)$. By the Gelfand-Naimark theory, $C_h(X)$ is isomorphic
to a $C^*$-algebra of continuous functions on a compact Hausdorff space.

\begin{definition}
\label{def:Higson-compactification}
 The compactification $hX$ of $X$ characterized by the property 
$C(hX) = C_h(X)$ is called the {\itshape Higson compactification}. Its boundary 
$hX\setminus X$ is denoted $\nu X$, and is called the {\itshape Higson corona} 
of $X$.
\end{definition}

\begin{definition}
 Let $X$ be a proper coarse space, and let $\overline{X}$ be a
 compactification of $X$. We say that $\overline{X}$ is a 
 \emph{coarse compactification} if the identity map $X\to X$ extends to
 a continuous surjection $hX\to \overline{X}$. 
 We call $\overline{X}\setminus X$ a \emph{corona} of $X$.
\end{definition}

The one point compactification of a proper coarse space is a coarse
compactification. If $X$ is a proper geodesic Gromov hyperbolic space,
then the Gromov compactification $\overline{X}$ is a coarse
compactification.

\subsection{Coarse metric simplicial complex}
\label{sec:suff-cond-contr}
In this section, we construct a coarse structure for a simplicial complex.
For details of coarse structure, see \cite{MR2007488}. 

\begin{definition}
\label{def:cm-cpx-0-map}
A {\itshape coarse metric simplicial complex} is a simplicial complex $X$ equipped with the coarse structure defined as follows.

We denote by $X^{(0)}$ the set of vertices
 of $X$. We also denote by $X^{(1)}$ the 1-skeleton of $X$. We equip
 $X^{(0)}$ with a proper metric $d_{X^{(0)}}$ induced by the graph structure of $X^{(1)}$,
 where we consider that each edge has length 1.
We choose a map $V\colon X\to X^{(0)}$ such that
\begin{itemize}
 \item for $p\in X$, the image $\varphi_V(p)$ is a vertex of a
       simplex of $X$ which contains $p$;
 \item the restriction to $X^{(0)}$ is the identity.
\end{itemize} 
We say that $V$ is a {\itshape 0-skeleton map}.
We remark that $V$ is pseudocontinuous. 
We equip $X^{(0)}$ with the bounded coarse structure induced by $d_{X^{(0)}}$.
Then we equip $X$ with the pullback
coarse structure induced by $V$. Then $X$ is a proper coarse space.
\end{definition}
See \cite[Section 2]{boundary} for the pseudocontinuous map and the
pullback coarse structure.


\begin{example}
\label{ex:c-mtrc-s-cpx} Let $G$ be a finitely generated group and let
$d_G$ be a word metric on $G$. We equip $G$ with a bounded coarse
structure induced by $d_G$. We suppose that there exists a finite
$G$-simplicial complex $\EG$ which is a universal space for proper
action. Then $\EG$ is a coarse simplicial complex which is coarsely
equivalent to $G$.
\end{example}

\subsection{\calZ-structure and the condition \BM{}}

\begin{definition}
\label{def:BMconditions}
 Let $X$ be a coarse metric simplicial complex and let $\bar{X}$ be a
 coarse compactification of $X$. Set $\partial X\coloneqq \bar{X}\setminus X$. 
\begin{enumerate}
 \item Let
 $x\in \partial X$ be a point and let $U$ be a neighbourhood of $x$ in
 $\bar{X}$. We say that a neighbourhood $V$ of $x$ in $\bar{X}$ is
 weakly contractible in $U$ if every finite subcomplex $K$ in
 $V\cap X$ is contractible in $U\cap X$.
 \item Let $x\in \partial X$ be a point. We say that the triplet
$(\bar{X}, X, x)$ satisfies local-\BM{}, if for every neighbourhood $U$
       of $x$ in $\bar{X}$, there exists a 
       neighbourhood $V$ of $x$ which is weakly contractible in $U$.
 \item \label{defIntro:BM}
       We say that the pair $(\bar{X},X)$ satisfies the condition \BM{} if
       for every point $x\in \partial X$, the triplet $(\bar{X},X,x)$ satisfies 
       local-\BM{}.
 \item We say that the pair
       $(\bar{X},X)$ satisfies the condition \fBM{} if $(\bar{X},X)$ satisfies
       \BM{} and $\bar{X}$ has a finite covering dimension.
\end{enumerate}
\end{definition}



\begin{lemma}
\label{lem:map-local-BM}
 Let $X$ be a coarse metric simplicial complex and let $\bar{X}$ be a
 coarse compactification of $X$. Set $\partial X\coloneqq \bar{X}\setminus X$. 
 Let $x\in \partial X$ be a point. Suppose
 that there exist
\begin{itemize}
 \item a neighbourhood $U_x$ of $x$ in $\bar{X}$;
 \item a coarse metric simplicial complex $Y$;
 \item a compactification $\bar{Y}$ of $Y$; 
 \item a map $f \colon U_x \to \bar{Y}$, such that $f(U_x\setminus
       X)\subset \bar{Y}\setminus Y$ and $f$ is a homeomorphism onto the
       image.
\end{itemize}
Then the triplet $(\bar{X},X,x)$ satisfies local-\BM{} if and only if so does
the triplet $(\bar{Y},Y,f(x))$.
\end{lemma}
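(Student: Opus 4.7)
The plan is to exploit that local-\BM{} at a point depends only on arbitrarily small neighbourhoods of that point together with the local structure of the pair $(\bar{X},X)$, so the hypothesised homeomorphism $f$ transports the condition directly. I would prove only the ``only if'' direction in detail; the converse would follow by the symmetric argument applied to $f^{-1}\colon f(U_x)\to U_x$. Combining $f(U_x\setminus X)\subset \bar{Y}\setminus Y$ with the fact that $f$ is a homeomorphism onto its image, one sees that $f$ restricts to a homeomorphism $U_x\cap X\to f(U_x)\cap Y$, so the hypotheses are essentially symmetric in $(\bar{X},X,x)$ and $(\bar{Y},Y,f(x))$.

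First I would observe that in verifying local-\BM{} at a point, it suffices to consider arbitrarily small neighbourhoods: if a neighbourhood $V$ is weakly contractible in $U_0$ and $U_0\subset U$, then any finite subcomplex of $V\cap X$ null-homotopic in $U_0\cap X$ is \emph{a fortiori} null-homotopic in $U\cap X$, so $V$ is also weakly contractible in $U$. Consequently, given an arbitrary neighbourhood $U'$ of $f(x)$ in $\bar{Y}$, I would replace $U'$ by $U'\cap f(U_x)$ and so assume $U'\subset f(U_x)$; then $U\coloneqq f^{-1}(U')$ is a neighbourhood of $x$ contained in $U_x$.

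Next I would apply the local-\BM{} hypothesis for $(\bar{X},X,x)$ to $U$ to obtain a neighbourhood $V\subset U_x$ of $x$ which is weakly contractible in $U$, and set $V'\coloneqq f(V)$, a neighbourhood of $f(x)$ in $\bar{Y}$. To verify that $V'$ is weakly contractible in $U'$, I would take a finite subcomplex $K\subset V'\cap Y$; since $f$ respects the interior--corona decomposition, $f^{-1}(K)$ is a compact subset of $V\cap X$. Covering $f^{-1}(K)$ by a finite subcomplex $K_0$ of $X$ with $K_0\subset V\cap X$, the inclusion $K_0\hookrightarrow U\cap X$ is null-homotopic by weak contractibility of $V$ in $U$, and transporting this null-homotopy via the homeomorphism $f$ shows that $f(K_0)\hookrightarrow U'\cap Y$ is null-homotopic; since $K\subset f(K_0)$, the same holds for $K$.

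The hard part will be the step producing a finite subcomplex $K_0$ of $X$ with $f^{-1}(K)\subset K_0\subset V\cap X$, because $f$ is merely a topological homeomorphism and does not respect the simplicial structures on $X$ and $Y$. I would handle this either by passing to a sufficiently fine subdivision of $X$---which does not change the underlying coarse metric simplicial complex up to coarse equivalence---or, more cleanly, by first reformulating local-\BM{} as a statement about null-homotopy of compact subsets of $V\cap X$ in $U\cap X$, a property that is manifestly invariant under homeomorphisms of the ambient pairs and which is equivalent to the subcomplex version because compact subsets of $V\cap X$ can always be enlarged to finite subcomplexes contained in $V\cap X$ after subdivision.
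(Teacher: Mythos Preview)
The paper states this lemma without proof, evidently regarding it as immediate, so there is no argument of the author's to compare against. Your outline is correct and is the natural way to proceed.

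Two remarks. First, your symmetry claim that $f$ restricts to a homeomorphism $U_x\cap X\to f(U_x)\cap Y$ does not follow from the stated hypotheses alone: $f(U_x\setminus X)\subset\bar Y\setminus Y$ together with injectivity does not force $f(U_x\cap X)\subset Y$ (a point of $U_x\cap X$ could map to a point of $\partial Y$ not in $f(U_x\setminus X)$). In the paper's applications $f$ visibly preserves both interior and boundary, so this is a harmless omission in the lemma's statement rather than in your argument; just add the hypothesis $f(U_x\cap X)\subset Y$ explicitly before invoking symmetry.

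Second, the ``hard part'' you isolate disappears once you read Definition~\ref{def:BMconditions} as the paper itself does. In the proofs of Lemma~\ref{lem:hmtpymap-local-BM}, Lemma~\ref{lem:geodesic-contraction}, and Proposition~\ref{prop:one-pt-pa-BM} the author writes ``let $K$ be a finite simplicial complex and let $\eta\colon K\to X$ be a simplicial map with $\eta(K)\subset V$'', so ``finite subcomplex $K$ in $V\cap X$'' is meant as a map from an abstract finite complex into $V\cap X$, not as a subcomplex of the fixed triangulation of $X$. Under that reading the lemma is a tautology: given $\eta\colon K\to V'\cap Y$, the composite $f^{-1}\circ\eta\colon K\to V\cap X$ is null-homotopic in $U\cap X$ by hypothesis, and pushing the null-homotopy forward through $f$ gives one in $U'\cap Y$. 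No enlargement of $f^{-1}(\eta(K))$ to a subcomplex of $X$ is needed, and your proposed reformulation via compact subsets, while correct, is more work than required.
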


\begin{lemma}
\label{lem:hmtpymap-local-BM}
 Let $X$ be a coarse metric simplicial complex and let $\bar{X}$ be a
 coarse compactification of $X$. Set $\partial X\coloneqq \bar{X}\setminus X$. 
 Let $x\in \partial X$ be a point. Suppose
 that there exist
\begin{itemize}
 \item a neighbourhood $U_x$ of $x$ in $\bar{X}$;
 \item a coarse metric simplicial complex $Y$;
 \item a compactification $\bar{Y}$ of $Y$; 
 \item a map $f \colon U_x \to \bar{Y}$ such that
       $f(U_x\setminus X)\subset \bar{Y}\setminus Y$,
       and $f(U_x)$ is a neighbourhood of $f(x)$;
 \item a map $g\colon f(U_x) \to \bar{X}$;
 \item a homotopy $h\colon U_x\cap X \times [0,1] \to X$ such that
       $h(x,0) = x,\, h(x,1) = g\circ f(x)$ for all $x\in U_x\cap X$
       and $h$ is close to the projection $U_x\cap
       X\times [0,1]\ni (x,t) \mapsto x$.
\end{itemize}
Then the triplet $(\bar{X},X,x)$ satisfies local-\BM{} if so does the
triplet $(\bar{Y},Y,f(x))$.
\end{lemma}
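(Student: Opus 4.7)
The plan is to transfer local-\BM{} from $(\bar Y, Y, f(x))$ to $(\bar X, X, x)$ by pushing a finite subcomplex of $X$ over to $Y$ via $f$, contracting it there via the hypothesis, pulling the contraction back via $g$, and closing the loop with the homotopy $h$. Given a neighbourhood $U$ of $x$ in $\bar X$, I first shrink $U$ on both sides. Let $E$ be a symmetric controlled entourage witnessing the closeness of $h$ to the projection, and set $F := \bar X \setminus U$. The standard coarse-compactification identity $\overline{E[F \cap X]}\cap \partial X = \overline{F\cap X}\cap\partial X \subset F$ together with $x\notin F$ yields a neighbourhood $U' \subset U$ of $x$ disjoint from $E[F \cap X]$, so that $h(U' \cap X \times [0,1]) \subset U \cap X$. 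On the $Y$-side, I observe $g(f(x)) = x$: for $p_n \to x$ with $p_n \in U_x \cap X$, the point $g(f(p_n)) = h(p_n,1)$ is $E$-close to $p_n$, hence converges to $x$ in $\bar X$ by the same principle, while by continuity of $g\circ f$ it also converges to $g(f(x))$. Thus $g^{-1}(U)$ is an open neighbourhood of $f(x)$ inside $f(U_x)$, and since $f(U_x)$ is itself a neighbourhood of $f(x)$, I can choose an open neighbourhood $U_Y$ of $f(x)$ in $\bar Y$ with $U_Y \subset f(U_x)$ and $g(U_Y) \subset U$. The hypothesis $f(U_x\setminus X) \subset \bar Y \setminus Y$ gives $U_Y \cap Y \subset f(U_x) \cap Y \subset f(U_x \cap X)$, and then $g(U_Y \cap Y) \subset g(f(U_x\cap X)) \subset X$, so in fact $g(U_Y \cap Y) \subset U \cap X$.

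Next, apply local-\BM{} for $(\bar Y, Y, f(x))$ to the neighbourhood $U_Y$ to obtain $V_Y^0 \subset U_Y$ of $f(x)$ weakly contractible in $U_Y$. The key subtlety is that for a finite subcomplex $K \subset X$, the image $f(K) \subset Y$ is generally not a subcomplex of $Y$, so one cannot feed it directly into weak contractibility. I replace $f(K)$ by its simplicial carrier $L := \mathrm{carr}(f(K))$, the minimal subcomplex of $Y$ containing $f(K)$. For a coarse metric simplicial complex, the carrier of any set $A\subset Y$ is contained in a fixed controlled entourage of $A$: a point of $\mathrm{carr}(A)$ shares a closed simplex with some $a \in A$, and the graph distance between the $0$-skeleton maps of two such points is uniformly bounded. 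Applying the same coarse-compactification identity with $\mathrm{carr}$ in place of $E[\cdot]$ yields a smaller neighbourhood $V_Y \subset V_Y^0$ of $f(x)$ such that $\mathrm{carr}(V_Y \cap Y) \subset V_Y^0 \cap Y$. I then set $V := f^{-1}(V_Y) \cap U'$, which is the desired neighbourhood of $x$ in $\bar X$.

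For the final assembly, take a finite subcomplex $K \subset V \cap X$. Then $f(K) \subset V_Y \cap Y$, so $L = \mathrm{carr}(f(K))$ is a finite subcomplex of $V_Y^0 \cap Y$; by weak contractibility of $V_Y^0$ in $U_Y$ the subcomplex $L$ contracts inside $U_Y \cap Y$. Applying $g$ gives a contraction of $g(L) \supset g(f(K))$ inside $g(U_Y\cap Y) \subset U\cap X$. Concatenating with the homotopy $h$, which by the choice of $U'$ deforms $K$ to $g(f(K))$ entirely within $U \cap X$, produces a contraction of $K$ inside $U\cap X$, completing the verification of local-\BM{} at $x$. The main obstacle, and the place where the coarse geometry of simplicial complexes is genuinely used, is this middle passage from the non-simplicial image $f(K)$ to a finite subcomplex of $Y$ still sitting in a controlled neighbourhood of $f(x)$.
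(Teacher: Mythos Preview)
Your argument is correct and follows the same overall strategy as the paper: shrink a given neighbourhood of $x$ so that the homotopy $h$ stays inside it, transfer to $\bar Y$ via $f$, invoke local-\BM{} at $f(x)$, pull the resulting contraction back via $g$, and concatenate with $h$. The organisation of the neighbourhood-shrinking is slightly different (you prove $g(f(x))=x$ and then use continuity of $g$ to obtain $U_Y$ with $g(U_Y\cap Y)\subset U\cap X$, whereas the paper obtains the analogous inclusion directly from the condition $h(f^{-1}(U'')\cap X\times[0,1])\subset U$ evaluated at $t=1$), but the content is the same.

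The one point where your proof is genuinely more careful is the passage through the simplicial carrier. The paper's definition of weak contractibility is stated for finite \emph{subcomplexes} of $V\cap Y$, yet in its proof it applies the conclusion directly to the map $f\circ\eta\colon K\to V''$, whose image need not be a subcomplex of $Y$. You close this gap by replacing $f(K)$ with $\mathrm{carr}(f(K))$ and using the coarse-compactification property once more to ensure the carrier stays inside the weakly contractible neighbourhood. This is a legitimate refinement rather than a different route.
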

\begin{proof}
 Let $U$ be a neighbourhood of $x$ in $\bar{X}$. Set $U'\coloneqq U\cap U_x$. 
 Since $h$ is close to the projection onto the first coordinate, we
 can find a smaller neighbourhood $U''\subset f(U')$ of $f(x)$ such that
 $h(f^{-1}(U'')\cap X\times [0,1]) \subset U$. Especially, we have
 $g(U''\cap Y)\subset U$.
 
Suppose that the triplet $(\bar{Y},Y,f(x))$ satisfies
 local-\BM{}. Then there exists a neighbourhood $V''$ of $f(x)$ which is
 weakly contractible in $U''$. 

 Set $V\coloneqq f^{-1}(V'')$. Let $K$ be a simplicial complex and let
 $\eta\colon K\to X$ be a simplicial map such that $\eta(K) \subset
 V$. Then there exists a homotopy $h'\colon K\times[1,2]\to U''$ such
 that $h'(q,1) = f \circ \eta(q),$ and $h'(-,2)$ is a constant map.

 We define a homotopy $H\colon K\times [0,2] \to U$ as follows. For
 $0\leq t\leq 1$, set $H(q,t)\coloneqq h(\eta(q),t)$. For $1\leq t\leq 2$, set
 $H(q,t)\coloneqq g\circ h'(q,t)$. Then we have that 
 $H(q,0)= \eta(q)$, and  $H(-,2)$ is a constant map.
\end{proof}

The following proposition follows from the result by
Bestvina-Mess~\cite[Proposition 2.1.]{MR1096169}.
\begin{proposition}
\label{prop:Z-set} Let $X$ be a coarse metric simplicial complex and let
 $\bar{X}$ be a coarse compactification of $X$.
 Set $\partial X\coloneqq \bar{X}\setminus X$. Suppose $(\bar{X},X)$
satisfies \fBM. 
Then $\bar{X}$ is ANR and $\partial X\subset \bar{X}$ is a $\mathcal{Z}$-set.
\end{proposition}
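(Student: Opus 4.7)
The plan is to reduce the statement directly to \cite[Proposition 2.1]{MR1096169} by checking that our formulation of \BM{} matches (or implies) the hypothesis used there. In the Bestvina--Mess setting, the hypothesis on a compactification $\bar{X}$ of a locally finite simplicial complex $X$ is that every point $z\in\partial X$ admits arbitrarily small neighbourhoods $V\subset\bar{X}$ such that each finite subcomplex of $V\cap X$ is contractible in a prescribed larger neighbourhood $U\cap X$. This is identical to our local-\BM{} condition from \cref{def:BMconditions}, so under \BM{} the nullity assumption of Bestvina--Mess is satisfied at every boundary point; under \fBM{} we additionally have finite covering dimension of $\bar{X}$.

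First I would argue that $\bar{X}$ is locally contractible at every $z\in\partial X$. The strategy is the usual inductive skeletal extension: given a neighbourhood $U$ of $z$, choose a descending chain of neighbourhoods $U=U_0\supset U_1\supset\dots\supset U_{n+1}$, where $n=\dim(\bar{X})$, each $U_{k+1}$ chosen via \BM{} so that any finite subcomplex of $U_{k+1}\cap X$ is contractible in $U_k\cap X$. One then builds a nullhomotopy of the inclusion $U_{n+1}\cap X\hookrightarrow U\cap X$ one skeleton at a time, using the contractibility at each stage to extend across $(k+1)$-simplices; this terminates because $n$ is finite, hence the need for \fBM{} rather than merely \BM{}. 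Local contractibility at points of $X$ itself is automatic since $X$ is a simplicial complex.

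Next I would conclude $\bar{X}$ is an ANR: $\bar{X}$ is compact, metrizable (coarse compactifications of proper coarse spaces are metrizable when $X$ is), finite-dimensional, and locally contractible everywhere, so Borsuk's characterization yields the ANR conclusion, and since $\bar{X}$ is contractible-looking (contractibility of $\bar{X}$ itself follows from local contractibility combined with \BM{} globalised to the whole $\partial X$), one obtains that $\bar{X}$ is actually AR. Finally, to show $\partial X$ is a $\mathcal{Z}$-set, one must construct, for any open $W\subset\bar{X}$ and any $\epsilon>0$, a map $W\cap X\hookrightarrow W\setminus \partial X$ that is $\epsilon$-close to the inclusion $W\cap X\hookrightarrow W$; by the standard homotopy extension in the ANR $\bar{X}$, this amounts to producing small homotopies pushing $\partial X\cap W$ into $X$, which is furnished by the local contractions built in the first step together with finite-dimensionality.

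The main obstacle is the inductive skeletal construction in the first step: one must carefully arrange the choice of neighbourhoods $U_{k+1}$ so that the contractions produced by \BM{} can be glued together across cells of the nerve of a sufficiently fine cover, without losing control of which neighbourhood of $z$ the image lands in. This is exactly the technical content of \cite[Proposition 2.1]{MR1096169}, and the \fBM{} hypothesis is tailored to make it work; once the local contractibility is in hand, the passages to ANR, to contractibility of $\bar{X}$, and to the $\mathcal{Z}$-set property are routine consequences.
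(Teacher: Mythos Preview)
Your approach is essentially the paper's: the paper gives no argument at all beyond citing \cite[Proposition 2.1]{MR1096169}, and your proposal is precisely a reduction to that result together with a sketch of its proof. One small overreach: you claim $\bar{X}$ is AR, but the proposition only asserts ANR, and contractibility of $\bar{X}$ genuinely requires the extra hypothesis that $X$ is contractible (this is exactly the content of \cref{cor:contractible}, not of the present proposition).
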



\begin{corollary}
\label{cor:contractible} Let $X$ be a coarse metric simplicial complex
and let $\bar{X}$ be a coarse compactification of $X$.  Suppose
$(\bar{X},X)$ satisfies \fBM{}. 
If $X$ is contractible, then so is $\bar{X}$.
\end{corollary}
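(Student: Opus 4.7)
The plan is to invoke \cref{prop:Z-set} directly: under the assumption that $(\bar{X},X)$ satisfies \fBM{}, that proposition already supplies the two ingredients needed, namely that $\bar{X}$ is an ANR and that $\partial X = \bar{X}\setminus X$ is a $\mathcal{Z}$-set in $\bar{X}$. Thus essentially all of the real analytic content has already been absorbed into \cref{prop:Z-set}, and what remains is a soft deduction at the level of homotopy equivalences.

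Recall the definition of a $\mathcal{Z}$-set adopted in this paper: for every open subset $U\subset \bar{X}$, the inclusion $U\setminus\partial X\hookrightarrow U$ is a homotopy equivalence. I would specialise this property to the single open set $U=\bar{X}$, which yields at once that the inclusion $X \hookrightarrow \bar{X}$ is a homotopy equivalence. Combined with the hypothesis that $X$ is contractible, this forces $\bar{X}$ to have the homotopy type of a point, hence to be contractible.

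Since the core work (verifying the ANR property of $\bar{X}$ and the $\mathcal{Z}$-set property of $\partial X$ via the Bestvina--Mess style approximation arguments) has already been done inside \cref{prop:Z-set}, there is essentially no obstacle in the proof of the corollary itself; the only subtlety worth flagging is to remember to apply the $\mathcal{Z}$-set definition with the ambient open set taken to be the whole of $\bar{X}$, which is permitted since $\bar{X}$ is certainly open in itself.
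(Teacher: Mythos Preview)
Your proposal is correct and matches the paper's approach: the paper does not give an explicit proof of \cref{cor:contractible}, but precisely this argument appears verbatim in the proof of Theorem~\ref{thmIntro:main-BM}, where the authors write ``Thus by Proposition~\ref{prop:Z-set}, $\dG$ is a $\mathcal{Z}$-set in $\barEG$. So $\barEG$ is homotopic to $\EG$. Since $\EG$ is contractible, so is $\barEG$.''
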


\begin{corollary}
 Let $G$ be a group and let $\EG$ be a universal space of proper action.
 Let $\barEG$ be a coarse compactification of $\EG$. Set $\partial G\coloneqq \barEG\setminus \EG$.
 If the pair $(\barEG,\EG)$ satisfies \fBM, then $(\barEG,\partial G)$ 
 is a \calZ-structure of $G$.
\end{corollary}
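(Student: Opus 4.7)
The plan is to verify the four axioms of a \calZ-structure one by one, using the results already collected in the section.

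First, since $\EG$ is a universal space for proper $G$-action, it is contractible, and by hypothesis $(\barEG,\EG)$ satisfies \fBM{}. Applying \cref{prop:Z-set} immediately gives that $\barEG$ is ANR and that $\partial G$ is a $\mathcal{Z}$-set in $\barEG$; this takes care of axiom (2) and half of axiom (1). Applying \cref{cor:contractible} to the contractible space $\EG$ shows that $\barEG$ is contractible. An ANR that is contractible is AR, which completes axiom (1).

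For axiom (3), I would invoke the setup of \cref{ex:c-mtrc-s-cpx}: since we may (and do) take $\EG$ to be a $G$-finite simplicial complex serving as a universal space for proper action, the piecewise linear path metric coming from the standard simplicial metric (each simplex is isometric to the standard Euclidean one) is $G$-invariant, proper, and induces the coarse structure of \cref{def:cm-cpx-0-map}; the $G$-action is by isometries and is proper and cocompact by construction. Axiom (4) is exactly the assumption that $\barEG$ is a coarse compactification of $\EG$, so there is nothing to prove.

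There is no genuine obstacle here: the corollary is just the packaging of \cref{prop:Z-set} and \cref{cor:contractible} with the standard metric structure on a $G$-finite simplicial complex. The only point requiring a moment of care is ensuring that the chosen metric on $\EG$ both supports a proper cocompact isometric $G$-action and is compatible with the coarse structure used in the \fBM{} hypothesis, which is handled by taking the simplicial path metric as in \cref{ex:c-mtrc-s-cpx}.
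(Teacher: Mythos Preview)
Your proposal is correct and matches the paper's intent: the paper states this corollary without proof, placing it immediately after \cref{prop:Z-set} and \cref{cor:contractible} precisely because it is meant to follow by combining those two results (for axioms (1) and (2)) with the assumed coarse compactification (axiom (4)) and the standard metric on a $G$-finite simplicial complex as in \cref{ex:c-mtrc-s-cpx} (axiom (3)). Your write-up simply makes explicit the verification the paper leaves to the reader.
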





The following follows from {\cite[Theorem 1]{Dra-BM-formula}}.
\begin{theorem}[Dranishnikov] 
 \label{thm:BM-implies-cohomdim}
 Let $G$ be a group and let $(\bar{X},Z)$ be a \calZ-structure of $G$. 
 Then
 \begin{align*}
  \dim Z = \max\{p:H^p(Z;\Z)\neq 0\}
 \end{align*}
\end{theorem}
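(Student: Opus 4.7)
The plan is to combine Alexandroff's dimension theorem with the classical Bestvina--Mess argument exploiting the \calZ-set structure. The inequality $\max\{p:H^p(Z;\Z)\neq 0\}\leq \dim Z$ is immediate, since for a finite-dimensional compact metric space the Alexander--Spanier cohomology vanishes above the covering dimension, and $Z$ is finite-dimensional because it is closed in the finite-dimensional AR $\bar{X}$. The content of the theorem is thus the reverse inequality $\dim Z \leq \max\{p:H^p(Z;\Z)\neq 0\}$.

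By Alexandroff's theorem, $\dim Z = \cd{\Z}{Z}$ for the finite-dimensional compact metric space $Z$, so it suffices to show $\cd{\Z}{Z}\leq \max\{p:H^p(Z;\Z)\neq 0\}$. Setting $X\coloneqq \bar{X}\setminus Z$, the \calZ-set property makes the inclusion $X\hookrightarrow \bar{X}$ a homotopy equivalence, and since $\bar{X}$ is AR, hence contractible, $X$ is contractible as well. The long exact sequence of the pair $(\bar{X},Z)$ in Alexander--Spanier cohomology together with the contractibility of $\bar{X}$ yields an isomorphism
\[
\widetilde{H}^{p-1}(Z;\Z)\;\xrightarrow{\;\cong\;}\; H^p(\bar{X},Z;\Z),
\]
and Alexander--Spanier excision identifies the right-hand side with $H^p_c(X;\Z)$.

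The last input is the Bestvina--Mess cohomological characterisation of covering dimension for a \calZ-set in a finite-dimensional compact AR, which asserts $\dim Z + 1 = \max\{p:H^p_c(X;\Z)\neq 0\}$. Combined with the isomorphism above, this immediately yields $\dim Z = \max\{p:\widetilde{H}^p(Z;\Z)\neq 0\} = \max\{p:H^p(Z;\Z)\neq 0\}$. The main obstacle is precisely this Bestvina--Mess input: for a generic compactum the naive analogue of Alexandroff's formula relating covering dimension to compactly supported cohomology of the complement can fail, and one needs both the \calZ-set property (to move closed subsets of $Z$ into ``thin'' open neighbourhoods in $\bar{X}$) and the AR ambient structure to carry the argument through.
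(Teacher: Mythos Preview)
The paper does not prove this statement: it is quoted as Dranishnikov's theorem, attributed to \cite[Theorem~1]{Dra-BM-formula}, with no argument given. So there is no proof in the paper to compare your attempt against.

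Your attempt has a genuine gap. The ``Bestvina--Mess input'' you invoke---that $\dim Z + 1 = \max\{p:H^p_c(X;\Z)\neq 0\}$ holds for an arbitrary \calZ-set $Z$ in a finite-dimensional compact AR $\bar{X}$---is not a theorem of Bestvina--Mess and is in fact false at that level of generality. For a counterexample, take $\bar{X}=D^{n}$ with $n$ large and let $Z\subset S^{n-1}=\partial D^{n}$ be any positive-dimensional compactum with $\widetilde{H}^{*}(Z;\Z)=0$ (such acyclic compacta exist in every dimension $\geq 2$). Then $Z$ is a \calZ-set in the AR $D^{n}$, yet by your own isomorphism $H^p_c(X;\Z)\cong\widetilde{H}^{p-1}(Z;\Z)=0$ for all $p\geq 1$, while $\dim Z>0$. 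Bestvina--Mess established the formula only for Gromov boundaries of hyperbolic groups; its extension to arbitrary \calZ-structures is exactly the theorem you are trying to prove.

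What your outline omits is the one ingredient of a \calZ-structure you never use: the proper cocompact $G$-action on $X$. Dranishnikov's argument exploits this action, together with the coarse-compactification axiom, to pass from the local statement $\dim_{\Z}Z=\dim Z$---which guarantees only that $\check{H}^{\dim Z}$ is nonzero on \emph{some} closed subset of $Z$---to the global statement $\check{H}^{\dim Z}(Z;\Z)\neq 0$. Once that global nonvanishing is known, your isomorphism $\widetilde{H}^{p-1}(Z;\Z)\cong H^{p}_c(X;\Z)$ indeed finishes the job; but establishing it is the whole content of the theorem, and invoking it as a black box makes the argument circular.
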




For details of the cohomological dimension of groups, see \cite[Chapter VIII]{Brown-cohom-gr}.




\section{Relatively hyperbolic groups}
\label{sec:relat-hyperb-groups}
\subsection{Combinatorial horoball}
Let $G$ be a finitely generated group. Relative hyperbolicity of $G$
is characterized by a properly discontinuous action on a hyperbolic
space in the sense of Gromov. There exist several constructions of such
a space.  Here we review the {\itshape augmented space} defined by
Groves and Manning~\cite[Section 3.2.]{MR2448064}. An advantage of
their construction is that the augmented space is a proper metric space,
so its coarse cohomology is well-defined.

\begin{definition}
\label{def:Horoball}
Let $(P,d)$ be a proper metric space.
{\itshape The combinatorial horoball} based on $P$, denoted by
 $\Horo(P)$, is the graph defined as follows:
\begin{enumerate}
 \item $\Horo(P)^{(0)} = P \times (\N\cup \{0\})$. 
 \item $\Horo(P)^{(1)}$ contains the following two type of edges:
       \begin{enumerate}[label=(\roman*)] 
	\item For each $l\in \N\cup\{0\}$ and $p,q\in P$, 
	      if $0< d(p,q)\leq 2^{l}$
	      then there exists a 
	      {\itshape horizontal edge} connecting $(p,l)$ and $(q,l)$.
	\item For each $l\in \N\cup\{0\}$ and $p\in P$, there exists a 
	      {\itshape vertical edge} connecting $(p,l)$ and $(p,l+1)$.
       \end{enumerate}
\end{enumerate}
For a vertex $(p,l)\in P\times (\N\cup \{0\})$, we define that
the \emph{depth} of $(p,l)$ is $l$.

We endow $\Horo(P)$ with the graph metric. 
\end{definition}

When $P$ is a discrete proper metric space, 
$\Horo(P)$ is a proper geodesic space
 that is hyperbolic in the sense of Gromov. 
See \cite[Theorem 3.8]{MR2448064}. 

Groves and Manning describe the shapes of geodesic segments 
in combinatorial horoballs.

\begin{lemma}[{\cite[Lemma 3.10]{MR2448064}}]
 \label{lem:HoroGeodeGM}
 Let $\Horo(P)$ be a combinatorial horoball.  Suppose that 
 $x,y\in \Horo(P)$ are distinct vertices. 
 Then there is a geodesic $x$ and $y$
 which consists of at most two vertical segments and a single horizontal
 segment of length at most 3. 
 Moreover, any other geodesic between $x$
 and $y$ is Hausdorff distance at most 4 from this geodesic.
\end{lemma}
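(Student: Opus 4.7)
The plan is to use two local swap moves to bring every edge-path into a normal form consisting of an ascent, a single horizontal segment, and a descent, then to apply a length-reducing swap that forces the horizontal segment to have length at most $3$, and finally to derive the Hausdorff distance bound from the rigidity of this normal form.

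First I would set up the swap moves. The key observation is that a horizontal edge at level $l$ between $(p,l)$ and $(q,l)$ is also realised at any higher level because $2^l\le 2^{l+1}$. Hence $(p,l)\to(q,l)\to(q,l+1)$ can be replaced by $(p,l)\to(p,l+1)\to(q,l+1)$, swapping horizontal-then-ascent into ascent-then-horizontal (with the horizontal edge rising by one level); symmetrically, $(p,l+1)\to(p,l)\to(q,l)$ can be replaced by $(p,l+1)\to(q,l+1)\to(q,l)$, swapping descent-then-horizontal into horizontal-then-descent. Both moves preserve length and, applied repeatedly, push all ascents to the left and all descents to the right. Every edge-path therefore has the same length as one of the form (ascent, horizontal segment at a single top level $l$, descent).

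Second I would shorten the horizontal segment. Suppose it visits vertices $p_0,p_1,\dots,p_h$ in $P$ at level $l$ with $h\ge 4$. By the triangle inequality $d_P(p_{2i},p_{2i+2})\le 2^{l+1}$, so the segment can be replaced by one ascent, a horizontal segment $(p_0,l+1)\to(p_2,l+1)\to\cdots$ of length $\lceil h/2\rceil$ through the even-indexed $p_{2i}$ (together with one trailing edge ending at $p_h$ when $h$ is odd), and one descent back to level $l$ at the endpoint. The total length changes by $2-\lfloor h/2\rfloor\le 0$, and iterating produces a geodesic whose horizontal segment has length at most $3$.

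For the Hausdorff bound, any two geodesics between $x$ and $y$ are in the above normal form at respective top levels $l_1\le l_2$ and horizontal lengths $h_1,h_2\le 3$. Equating total lengths yields $2(l_2-l_1)=h_1-h_2\le 3$, so $l_2-l_1\in\{0,1\}$. Since both ascending (resp.\ descending) vertical segments lie in the column over the horizontal projection of $x$ (resp.\ $y$), the two geodesics differ only by at most one extra vertical step and by the choice of intermediate vertex in a horizontal segment of length at most $3$; a direct inspection then yields Hausdorff distance at most $4$. The main obstacle is the parity-dependent bookkeeping in the length-reducing swap and the need to check that every intermediate vertex used at level $l+1$ already lies in $P$, which is immediate because those vertices form a subset of $p_0,p_1,\dots,p_h$ from the original path.
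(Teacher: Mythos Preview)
The paper does not give its own proof of this lemma: it is quoted verbatim from Groves--Manning \cite[Lemma~3.10]{MR2448064} and used as a black box. So there is no in-paper argument to compare against; I will assess your sketch on its own merits.

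Your existence argument is the standard one and is essentially correct. The two swap moves are length-preserving because a horizontal edge at level $l$ persists at level $l+1$, and together with cancellation of adjacent up--down pairs they bring any geodesic into the form (ascent in the column of $x$) $\ast$ (horizontal at a single top level) $\ast$ (descent in the column of $y$). Your halving move then replaces a horizontal run of length $h\ge 4$ at level $l$ by one of length $\lceil h/2\rceil$ at level $l+1$ at cost $2-\lfloor h/2\rfloor\le 0$, so iterating yields a geodesic with $h\le 3$. This part is fine.

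The gap is in the Hausdorff estimate. You write ``any two geodesics between $x$ and $y$ are in the above normal form'', but this is not what you proved: you showed that any geodesic can be \emph{transformed} into normal form by swaps, not that every geodesic already \emph{is} in that form. A concrete counterexample: if $d_P(p,q)=1$ then $(p,0)\to(q,0)\to(q,1)$ is a geodesic from $(p,0)$ to $(q,1)$ that is horizontal-then-ascent, not ascent--horizontal--descent. Consequently your length equation $2(l_2-l_1)=h_1-h_2$ does not apply to an arbitrary second geodesic, and the ``direct inspection'' at the end is comparing the wrong objects. To repair the argument you must either (i) prove that each swap (and each halving move) changes the path by Hausdorff distance at most a fixed constant and control the accumulation, or (ii) argue directly that an arbitrary geodesic has unimodal depth with top level within $1$ of the normal-form top level and bounded total horizontal length, and then compare columns. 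Either route works, but neither is the one-line inspection you describe.
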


\begin{lemma}[{\cite[Lemma 3.11]{MR2448064}}]
\label{lem:parabolic-pt-of-horoball} Let $P$ be a proper metric space. We
suppose that $P$ is discrete. Then the Gromov compactification of the
combinatorial horoball $\Horo(P)$ is a one-point compactification of
$P$. Thus the Gromov boundary of $\Horo(P)$ consists of one point,
 called the parabolic point of $\Horo(P)$.
\end{lemma}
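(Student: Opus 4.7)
The plan is to compute the Gromov boundary directly from the explicit geodesic structure provided by \cref{lem:HoroGeodeGM}, and to read off the induced topology on the depth-zero slice $P \times \{0\}$. Fix a basepoint $x_0 = (p_0, 0) \in \Horo(P)$. The goal is to prove that every sequence $(x_n)$ in $\Horo(P)$ with $d(x_0, x_n) \to \infty$ defines the same point at infinity; since $\Horo(P)$ is hyperbolic, this will force the Gromov boundary to consist of a single point, and the topology the resulting compactification induces on $P \times \{0\}$ will then tautologically coincide with the one-point compactification of $P$.

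The key observation I would extract from \cref{lem:HoroGeodeGM} is that for any vertices $x = (p, l)$ and $y = (q, m)$, a geodesic between them travels vertically up to a distinguished peak level $L(x, y)$, crosses a horizontal segment of length at most $3$, and then travels vertically down; moreover, up to a uniform additive constant,
\begin{equation*}
L(x, y) \approx \max\bigl(l,\, m,\, \lceil \log_2 d_P(p, q) \rceil\bigr).
\end{equation*}
In particular, if $d_{\Horo(P)}(x_0, x_n) \to \infty$ then either the depth of $x_n$ or $d_P(p_0, p_n)$ tends to infinity, so $L(x_0, x_n) \to \infty$ in either case. Given two such unbounded sequences $(x_n), (y_n)$, I would then expand each of $d(x_0, x_n)$, $d(x_0, y_n)$, $d(x_n, y_n)$ in terms of the corresponding peaks and endpoint depths, and combine the three expressions to obtain an estimate of the form
\begin{equation*}
(x_n \mid y_n)_{x_0} \geq \min\bigl(L(x_0, x_n),\, L(x_0, y_n)\bigr) - C,
\end{equation*}
where $C$ absorbs both the length-$\leq 3$ horizontal contributions and the Hausdorff constant $4$ of \cref{lem:HoroGeodeGM}. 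Both peaks diverging, so does the Gromov product, and $(x_n), (y_n)$ are equivalent at infinity.

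For a sequence $(p_n, 0)$ in $P \times \{0\}$, the above computation specializes to give $d_{\Horo(P)}((p_0,0),(p_n,0)) \to \infty$ iff $d_P(p_0, p_n) \to \infty$, which is precisely the convergence criterion for the point at infinity in the one-point compactification of the discrete proper metric space $P$. The main obstacle I expect is the bookkeeping of additive constants in the peak estimate: both endpoint depths and the logarithmic horizontal growth enter, and the condition that horizontal edges at level $l$ exist only between points within $P$-distance $2^l$ must be tracked uniformly across the various regimes of $(l, m, d_P(p, q))$. Once this is pinned down, everything else reduces to a standard Gromov-product argument in a proper hyperbolic space.
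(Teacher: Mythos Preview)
The paper does not supply its own proof of this lemma: it is quoted verbatim as \cite[Lemma 3.11]{MR2448064} and used as a black box, so there is no in-paper argument to compare against. Your proposal is therefore not competing with anything the author wrote; it is competing with the Groves--Manning original.

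That said, your outline is sound and is essentially the standard way to extract the one-point boundary from \cref{lem:HoroGeodeGM}. The peak-level heuristic $L(x,y)\approx\max\bigl(l,m,\lceil\log_2 d_P(p,q)\rceil\bigr)$ is correct, and the Gromov-product computation you sketch indeed yields, after cancelling the endpoint depths,
\[
(x_n\mid y_n)_{x_0}\;\approx\;L(x_0,x_n)+L(x_0,y_n)-L(x_n,y_n)\;\geq\;\min\bigl(L(x_0,x_n),L(x_0,y_n)\bigr)-C,
\]
the last inequality coming from $d_P(p_n,q_n)\leq d_P(p_0,p_n)+d_P(p_0,q_n)$ pushed through the $\log_2$. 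Your identification of the bookkeeping of additive constants as the only real obstacle is accurate; nothing conceptual is missing. One small point worth tightening: the lemma as stated says the Gromov compactification of $\Horo(P)$ is the one-point compactification \emph{of $P$}, which literally cannot be right since $\Horo(P)\neq P$; what is meant (and what you correctly prove) is that the boundary is a single point and that the induced compactification of the slice $P\times\{0\}$ is its one-point compactification.
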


\begin{lemma}
\label{lem:HoroGeodeO-A-B}
 Let $P$ be a discrete metric space and let $\Horo(P)$ be the
 combinatorial horoball on $P$. 
 Let $d_{\Horo}$ denote the graph metric on $\Horo(P)$.
 Let $O\coloneqq (o,R),\, A\coloneqq (a,s),\, B\coloneqq (b,t)$
 be vertices of $\Horo(P)$.
 Let $D>0$. We suppose that $R>s+D+2$, $t>s$ and 
 $d_{\Horo}(O,B)>d_{\Horo}(O,A)$.
 Set $A'\coloneqq (a,s+D)$.
 Then we have $d_{\Horo}(A',B)\leq d_{\Horo}(A,B)$.
\end{lemma}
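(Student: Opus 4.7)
The plan is to split on the depth $l_0$ of the horizontal segment in a geodesic from $A$ to $B$ (guaranteed by \cref{lem:HoroGeodeGM}) and combine this with the hypothesis on $O$ via the triangle inequality. Two preliminary observations set things up. First, the case $a = b$ is ruled out by the hypothesis: then the $O$-geodesics to $A$ and $B$ differ only in their final vertical segment, so $d_\Horo(O, B) - d_\Horo(O, A) = s - t < 0$, contradicting the hypothesis. Second, since $R > s + D$, the minimizing depth of an $O$-to-$A$ geodesic is at least $R$, so the vertical segment descending from $(a, l)$ to $(a, s)$ passes through $A' = (a, s + D)$, yielding the identity $d_\Horo(O, A') = d_\Horo(O, A) - D$.

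Assume $a \neq b$. Apply \cref{lem:HoroGeodeGM} to obtain a geodesic from $A$ to $B$ consisting of a vertical segment from $(a,s)$ up to $(a, l_0)$, a horizontal segment of length $h_0 \leq 3$ from $(a, l_0)$ to $(b, l_0)$, and a vertical segment down to $(b,t)$, with $l_0 \geq t$ and $d_\Horo(A,B) = 2 l_0 - s - t + h_0$. If $l_0 \geq s + D$, then $A'$ lies on this geodesic and truncating yields $d_\Horo(A', B) \leq d_\Horo(A, B) - D$. If $l_0 < s + D$, consider instead the path from $A'$ that descends to $(a, l_0)$ and then follows the same horizontal and final vertical segments; its length is $(s + D - l_0) + h_0 + (l_0 - t) = (s + D - t) + h_0$, which is at most $d_\Horo(A, B)$ iff $l_0 \geq s + D/2$. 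This handles the intermediate subcase $s + D/2 \leq l_0 < s + D$.

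It remains to derive a contradiction in the subcase $l_0 < s + D/2$. The triangle inequality $d_\Horo(O, B) \leq d_\Horo(O, A') + d_\Horo(A', B)$, together with the identity $d_\Horo(O, A') = d_\Horo(O, A) - D$ and the hypothesis $d_\Horo(O, B) > d_\Horo(O, A)$, forces $d_\Horo(A', B) > D$. On the other hand, $h_0 \leq 3$ and the triangle inequality in $P$ give $d(a, b) \leq 3 \cdot 2^{l_0} < 2^{l_0 + 2}$, so the smallest integer $\mu$ with $d(a, b) \leq 2^\mu$ satisfies $\mu \leq l_0 + 2$. Using $l_0 \geq t \geq s + 1$, the subcase is empty when $D \leq 2$, and $l_0 \leq s + D - 2$ when $D \geq 3$, so $\mu \leq s + D$ in any case. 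Hence the horizontal edge $(a, s + D)$--$(b, s + D)$ exists in $\Horo(P)$, and the path from $A'$ along this edge and then vertically down to $B$ has length $1 + (s + D - t) \leq D$ (using $t \geq s + 1$), contradicting $d_\Horo(A', B) > D$. The main subtlety will be this last integer-arithmetic verification; the ``$+2$'' in the hypothesis $R > s + D + 2$ appears to provide a safety margin ensuring the identity $d_\Horo(O, A') = d_\Horo(O, A) - D$ remains robust in the presence of horizontal segments of length up to $3$ in the $O$-geodesics.
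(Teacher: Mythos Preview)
Your argument is correct but takes a longer route than the paper's. Both proofs start from the canonical $A$--$B$ geodesic of \cref{lem:HoroGeodeGM} with horizontal segment at some depth (your $l_0$, the paper's $u$). You then split into three subcases on $l_0$; the paper instead shows in one step that $u\geq R-2>s+D$, so only your Case~1 ever occurs and $A'$ already lies on the $A$--$B$ geodesic. The paper's contradiction is different from yours: it also takes the canonical $O$--$A$ geodesic with horizontal at depth $s'\geq R$ and horizontal length $r\leq 3$; if $u<R-2$ then $d(a,b)\leq 3\cdot 2^{u}<2^{u+2}\leq 2^{s'}$, so at depth $s'$ a single edge joins $a$ to $b$, producing an $O$--$B$ path of length $(s'-R)+(r+1)+(s'-t)\leq d_{\Horo}(O,A)$, contrary to hypothesis. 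This is where the ``$+2$'' in $R>s+D+2$ is actually used --- not as a safety margin for the identity $d_{\Horo}(O,A')=d_{\Horo}(O,A)-D$ as you speculate, but to convert $u\geq R-2$ into $u>s+D$. In fact your argument needs only $R>s+D$ for that identity, so your extra casework buys a marginally weaker hypothesis at the price of a longer proof.
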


\begin{proof}
 There exists $s',u\in \N$ with $s'\geq R$ such that:
 \begin{enumerate}[label=(\roman*)]
  \item The path $[(o,R),(o,s')]_{V}*[(o,s'),(a,s')]_{H}*[(a,s'),(a,s)]_{V}$
        is a geodesic from $O$ to $A$.
  \item The path $[(a,s),(a,u)]_{V}*[(a,u),(b,u)]_{H}*[(b,u),(b,t)]_{V}$
        is a geodesic from $A$ to $B$.
 \end{enumerate}
 Set $r\coloneqq d_{\Horo}((o,s'),(a,s'))$. 
 If $u<R-2$, then $d_{\Horo}((o,s'),(b,s'))\leq r+1$.
 By considering the path 
 $[(o,R),(o,s')]_{V}*[(o,s'),(b,s')]_{H}*[(b,s'),(b,t)]_{V}$, 
 we have
 \begin{align*}
  d_{\Horo}(O,B) \leq s'-R + r+ 1 + s'-t \leq s'-R + r+ s'-s\leq d_{\Horo}(O,A),
 \end{align*}
This contradicts the assumption. Thus we have $u\geq R-2\geq s+D$. 
 Then we have $d_{\Horo}(A',B)\leq d_{\Horo}(A,B)$.
\end{proof}

\subsection{Augmented space}

Let $G$ be a finitely generated group with a finite family of
infinite subgroups 
$\famP = \{P_1,\dots, P_k\}$. 
We take a finite generating set $\mathcal{S}$ for $G$. 
We assume that
$\mathcal{S}$ is symmetrized, so that $\mathcal{S} = \mathcal{S}^{-1}$.
We endow $G$ with the left-invariant word metric
$d_G$ with respect to $\mathcal{S}$. 

\begin{definition}
\label{def:order-of-cosets}
 Let $G$ and $\famP$ be as above. An order of the cosets of $(G,\famP)$
 is a sequence $\{g_n\}_{n\in \N}$ such that
 $g_i=e$ for $i\in \{1,\dots,k\}$, and 
for each $r \in
\{1,\dots,k\}$, the map 
$\N\to G/P_r: a \mapsto g_{ak+r}P_r$ is bijective. 
Thus the set of all cosets $\bigsqcup_{r=1}^k G/P_r$ is indexed by the map
$\N\ni i \mapsto g_iP_{(i)}$. Here $(i)$ denotes the remainder of $i$ divided by $k$.
\end{definition}

\begin{remark}
\label{rem:order-ij}
 Let $\{g_n\}_{n\in \N}$ be an order of the coset. 
 For $i,j\in \N$, if $g_iP_{(i)} \cap g_jP_{(j)}\neq \emptyset$, 
 then $i=j$.
\end{remark}

We fix an order $\{g_n\}_{n\in\N}$ of the cosets of $(G,\famP)$.
Each coset $g_iP_{(i)}$ has a proper metric $d_i$ which is the
restriction of $d_G$. 
Let $\Gamma$ be the Cayley graph of $(G,\mathcal{S})$.
We denote by $\Horo(g_iP_{(i)};\{0\})$ the full 
subgraph of $\Horo(g_iP_{(i)})$ spanned by
 $g_iP_{(i)}\times \{0\}$.
There exists a natural embedding 
$\psi_i\colon \Horo(g_iP_{(i)};\{0\}) \hookrightarrow \Gamma$ such that
$\psi_i(x,0) = x$ for all $x\in g_iP_{(i)}$. 
\begin{definition}
\label{notation:1}
{\itshape The augmented space } $\Xaug$ is obtained by pasting
 $\Horo(g_iP_{(i)})$
 to $\Gamma$ by $\psi_i$ for all $i\in \N$. 
Thus we can write it as follows:
\begin{align*}
 \Xaug &\coloneqq \Gamma\cup \bigcup_{i\in \N} \Horo(g_iP_{(i)}).
\end{align*}
\end{definition}

\begin{remark}
\label{rem:vertices}
The vertex set of $\Xaug$ can naturally be identified with 
the disjoint union of $G$ and the set of 
triplet $(i,p,l)$, where $i\in \N$, $p\in g_iP_{(i)}$, and $l\in \N$.
We sometimes denote $g\in g_iP_{(i)}$ by $(i,g,0)$ for simplicity.
\end{remark}

\begin{definition}
A group $G$ is hyperbolic relative to $\famP$ if the augmented space
 $\Xaug$ is hyperbolic in the sense of Gromov. A member of $\famP$ is
 called a \emph{peripheral subgroup}.
\end{definition}

In Section~\ref{sec:example-cohomdim}, we describe two examples of
relatively hyperbolic groups.

\section{\calZ-structure of relatively hyperbolic group}
\label{sec:contractibitiy}
\subsection{Setting}
\label{subsec:setting} Throughout this section, we fix the following
setting.  Let $G$ be a finitely generated group which is hyperbolic
relative to a finite family of infinite subgroups
$\famP=\{P_1,\dots, P_k\}$. We suppose that each $P\in \famP$ has an infinite
index in $G$. We choose an order of coset $\{g_n\}_{n\in\N}$ of
$(G,\famP)$ in the sense of Definition~\ref{def:order-of-cosets}. We
choose a finite generating set $\mathcal{S}$ of $G$.  Let $\distG$ be a
word metric for $\mathcal{S}$. Let $\Gamma$ be a Cayley
graph of $(G,\mathcal{S})$. Let $\Xaug$ be an augmented space and
let $\Vaug$ be its vertex set.  We denote by $\dGP$ the Gromov boundary of
$\Xaug$, which is called the {\itshape Bowditch boundary} for the pair $(G,\famP)$

Let $P \in \famP$ be a peripheral subgroup. We equip $P$ with a left
invariant proper metric $d_P\coloneqq d_G|_{P\times P}$.  We suppose that $P$
admits a finite $P$-simplicial complex $\EP$ which is a universal space
for proper actions. 

\begin{definition}
 
 We fix the following notation.
 \begin{itemize}
 \label{def:notationA}
  \item Denote by $V(\EP)$ the vertex set of $\EP$.
  \item Choose a fundamental domain $\Delta_P$ of $P$ on $\EP$ and a base point
        $e_P \in \Delta_P\cap V(\EP)$.
  \item Define a map $\iota_P\colon P\to V(\EP)$ as 
        $\iota_P(g) = g\cdot e_P$ for $g\in P$.
  \item Define a map $V_P\colon \EP \to P$ as
        $V_P(q)\coloneqq g$ for $q\in \EP$, where $g$ is an element of $P$ such that
        $q\in g\cdot\Delta_P$. 
  \item Equip $\EP$ with the pullback coarse structure
        induced by $V_P$, which is proper, and coarsely equivalent to $P$.
  \item For a subset $K^{0}\subset P$, define 
        $E_P(K^{0}) \coloneqq \cup_{g\in K^{0}} \, (g\cdot\Delta_P)\subset \EP$.
  \item Denote by $s_P$ the parabolic point of $\Horo(P)$ defined in
        Lemma~\ref{lem:parabolic-pt-of-horoball}.
 \end{itemize}
\end{definition}
We suppose that $P$ has a coarse compactification
$\overline{P}$. Set $W_P\coloneqq\overline{P} \setminus P$. Since $P$ and 
$\EP$ are coarsely equivalent, we have a coarse compactification
$\overline{\EP}: = \EP \cup W_P$.

\begin{definition}
\label{def:notationB}
 For each $i\in \N$, we fix the following notations.
 \begin{itemize}
  \item Define a map $\iota_i\colon g_iP_{(i)} \to g_iV(EP_{(i)})$ as 
        $\iota_i(h) = g_i(\iota_{P_{(i)}}(g_i^{-1}h))$ for $h\in g_iP_{i}$.
  \item Define a map $V_i\colon g_i\EP_{(i)} \to g_iP_{(i)}$ as
        $V_P(q)\coloneqq g_i(V_{P_{(i)}}(g_i^{-1}(q)))$ for $q\in g_i\EP_{(i)}$.
  \item For a subset $K^{0}\subset g_iP_{(i)}$, 
        define 
        $E_i(K^{0}) \coloneqq \cup_{g\in K^{0}} \, (g\cdot(\Delta_{P_{(i)}}))
        \subset g_i\EP_{(i)}$.
  \item Set $s_i\coloneqq g_is_{P_{(i)}}$ so that $s_i$ is the parabolic point of
        $\Horo(g_iP_{(i)})$.
 \end{itemize}
\end{definition}

We equip a coset $g_iP_{(i)}$ with 
a left invariant proper metric $d_i\coloneqq d_G|_{g_iP_{(i)}\times g_iP_{(i)}}$.
We set $W_i\coloneqq g_iW_{P_{(i)}}$.  Then we have coarse compactifications
$g_i\overline{P_{(i)}} = g_iP_{(i)} \cup W_i$ and 
$g_i\overline{\EP_{(i)}} = g_i\EP_{(i)} \cup W_i$.

\subsection{Rips complex of the augmented space and universal space}
\label{sec:rips-compl-augm}

We consider the Rips complex $\Rip(\Vaug)$ for positive integer $D$. 
For $r, R\in \N$ with $r+D< R$, we define:
\begin{align*}
 \Vaug_r &\coloneqq \bigsqcup_{i\in \N} (g_iP_{(i)}\times \{n\in \N: n\geq r\});\\
 \Vaug^R &\coloneqq G\sqcup \bigsqcup_{i\in \N} 
             (g_iP_{(i)}\times \{n\in \N: 1\leq n \leq R\});\\
 \Vaug_r^R \coloneqq \bigsqcup_{i\in \N} 
             (g_iP_{(i)}\times \{n\in \N: r\leq n \leq R\})
\end{align*}
We denote the full subcomplexes of $\Vaug_r$, $\Vaug^R$ and $\Vaug_r^R$
in $\Rip(\Vaug)$ by $\Rip(\Vaug)_r$, $\Rip(\Vaug)^R$ and 
$\Rip(\Vaug)_r^R$, respectively.

We consider a $P$-homotopy equivalence 

\begin{align}
\label{eqdef:f_P}
  f_P\colon \Rip^{(n)}(V(P,\{P\},d_P))_r \to \EP
\end{align}

which is defined in the proof of~\cite[Proposition A.3.]{relhypgrp}.
Here $\Rip^{(n)}(V(P,\{P\},d_P))_r$ is the $n$-th barycentric
subdivision of $\Rip(V(P,\{P\},d_P))_r$. For sufficiently large
$n$, we can assume that $f_P$ is a $P$-simplicial map. We can take $n$
independently of $P$ since $\famP$ is a finite family.

We consider a mapping cylinder
\begin{align*}
(R^{(n)}_D(V(P,\{P\},d_P))_r^R\times [0,1])&\cup_{f_P} \underline{E}P.
\end{align*}
Here we identify $(q,1) \in (R^{(n)}_D(V(P,\{P\},d_P))_r^R\times [0,1])$ with
$f_P(q)\in \EP$.
See the proof of~\cite[Proposition A.3.]{relhypgrp} for the detailed
description of this space.

Let $i$ be a positive integer. 
We define 
\begin{align}
\label{def:fi}
   f_i\coloneqq g_i\circ f_{P_{(i)}}\circ g_i^{-1}\colon \Rip^{(n)}(V(g_iP_{(i)},\{P_{(i)}\},d_i))_r \to g_i\EP_{(i)}
\end{align}
From now on, we abbreviate $V(g_iP_{(i)},\{g_iP_{(i)}\},d_i)$ by 
$V[i]$. We also abbreviate 
$\Rip^{(n)}(V[i])$ by $\Rip(V[i])$ to avoid messy notation.
Set 

\begin{align}
\label{eqdef:VaugIRi}
  \VaugIR{i}\coloneqq & G \sqcup \bigsqcup_{j\neq i}(g_jP_{(j)}\times \N) 
 \sqcup (g_iP_{(i)}\times \{n\in\N: 1\leq n\leq R\})\\
\notag
  = & \Vaug \setminus \{(i,g,l):g\in g_iP_{(i)}, l>R\}
\end{align}

Let $\Rip(\VaugIR{i})$ be the full subcomplex of $\VaugIR{i}$ in
$\Rip(\Vaug)$.
We define
\begin{align}
\label{def:ERX}
 \ERX[i]&\coloneqq \Rip(\VaugIR{i}) \cup 
 \left\{\Rip(V[i])_r^R\times[0,1]
 \cup_{f_i} g_i\EP_{(i)} \right\}.
\end{align}
In~(\ref{def:ERX}), we identify
$(q,1)\in \Rip(V[i])_r^R\times[0,1]$ with 
$f_i(q)\in g_i\EP_{(i)}$, and we also identify 
$(q,0) \in \Rip(V[i])_r^R\times[0,1]$
with $q\in \Rip(V[i])_r^R\subset \Rip(\VaugIR{i})$.

We equip $\ERX[i]$ with a coarse structure as follows.
Let 
\[
 V_G\colon \Rip(\Vaug) \to \Vaug
\]
be a 0-skeleton map in the sense of
Definition~\ref{def:cm-cpx-0-map}. We define a map 

\begin{align}
\label{eqdef:mapV_bib}
  V_{[i]}\colon \ERX[i]\to \VaugIR{i}
\end{align}

\begin{itemize}
 \item for $q\in \Rip(\VaugIR{i})$, set $V_{[i]}(q)\coloneqq V_G(q)$;
 \item for $(q,t) \in \Rip(V[i])_r^R\times[0,1)$,
       set $V_{[i]}(q,t)\coloneqq V_G(q)$;	%
 \item for $q\in g_i\EP_{(i)}$, set 
       $V_{[i]}(q)\coloneqq (V_i(q),1)\in g_iP_{(i)} \times \{1\}$,
       where $V_i$ is the map defined in \cref{def:notationB}.
\end{itemize}

We equip $\ERX[i]$ with the pullback coarse structure induced by $V_{[i]}$, which 
is proper, and coarsely equivalent to $\VaugIR{i}$.

Recall that we have a coarse compactification
$g_i\overline{P_{(i)}} = g_iP_{(i)} \cup W_i$.
Then we have a blown-up corona 

\begin{align}
\label{eqdef:dXbib}
  \partial X[i]\coloneqq\dGP \setminus \{s_i\} \sqcup
W_i,
\end{align}

whose topology is defined in \cite[Definition 7.13.]{boundary}. 
Then we have coarse compactifications
\begin{align*}
 \overline{X}[i]  &\coloneqq \VaugIR{i} \cup \partial X[i];\\
 \barERX[i]&\coloneqq \ERX[i] \cup \partial X[i].
\end{align*}

We give an explicit description of a neighbourhood basis of the point of
the corona of $\barERX[i]$.  Set $\hat{P}_i\coloneqq\dGP \setminus \{s_i\}$.
For $x\in \hat{P}_i$, let $U$ be a neighbourhood of $x$ in
$\overline{X}[i]$.  Set $\RDc(U)$ be a union of $U\cap \partial X[i]$
and the full subcomplex of $U\cap \VaugIR{i}$ in $\Rip(\VaugIR{i})$. Then
a neighbourhood basis of $x$ in $\barERX[i]$ consists of the collection
of those $\RDc(U)$.

For $y\in W_i$, let $U$ be a neighbourhood of $y$ in 
$g_i\overline{P_{(i)}} = g_iP_{(i)}\cup W_i$. 
We use the following two maps 
\begin{align*}
 &F_i\colon X^i\to \hat{P}_i; &L_i\colon \hat{P}_i \to g_iP_{(i)}
\end{align*}
which are defined in \cite[Section 7.1.]{boundary}.  Here $X^i\coloneqq
\Gamma\cup \bigcup_{j\neq i}\Horo(g_jP_{(j)})$.  Since $L_i$ is a
coarsely equivalent map, we extend $L_i$ to the map between coarse
compactifications
\[
 \bar{L}_i\colon \hat{P}_i\cup W_i \to g_iP_{(i)}\cup W_i.
\]
Set $U^\circ\coloneqq U\cap g_iP_{(i)}$.
Let $\RDp(U)$ be the union of 
the following subsets.
\begin{enumerate}
 \item \label{item:FinvFUo}
       The full subcomplex of $F_i^{-1}(F_i(U^\circ))$ in 
       $\Rip(\VaugIR{i})$;
 \item The full subcomplex of 
       $\{(x,n)\in U^\circ \times \N: 0\leq n\leq R\}$ in $\Rip(\VaugIR{i})$;
 \item The product $\Rip({U^\circ}_r^R) \times [0,1]$, where 
$\Rip({U^\circ}_r^R)$ is the full subcomplex of\\
       $U^\circ\times\{n\in \N:r\leq n \leq R\}$
       in $\Rip(V[i])_r^R$;
 \item The subcomplex $E_i(U^\circ) \subset g_i\EP_{(i)}$,
       defined in Definition~\ref{def:notationB} ;
 \item The subset $\bar{L}_i^{-1}(U)$.
\end{enumerate}
Then a
neighbourhood basis of $y$ in $\barERX[i]$ consist of the collection of those
$\RDp(U)$.

\subsection{One-point blow up}
\label{sec:proof-theor-refthm:b}
The purpose of this subsection is to prove the following.
\begin{proposition}
\label{prop:BM-single} Suppose $R$ and $D$ are large enough 
$(R> r+D\geq D\geq 4\delta+2)$. 
Let $i\in \N$.
If $(\overline\EP_{(i)}, \EP_{(i)})$
satisfies \BM{}, then so does $(\barERX[i],\ERX[i])$.
\end{proposition}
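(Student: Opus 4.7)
The plan is to verify local-\BM{} at every boundary point $p \in \partial X[i] = \hat{P}_i \sqcup W_i$ by splitting into two cases according to whether $p$ lies in $\hat{P}_i \coloneqq \dGP \setminus \{s_i\}$ or in the blown-up corona $W_i$. In the first case the conclusion will be a formal consequence of the $\delta$-hyperbolicity of $\Xaug$; in the second it will genuinely use the hypothesis that $(\overline{\EP}_{(i)}, \EP_{(i)})$ satisfies \BM{}.

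For $x \in \hat{P}_i$, the plan is to compare with the standard Gromov compactification $\Rip(\Vaug) \cup \dGP$ of the Rips complex of the augmented space. Because $x \neq s_i$, the Gromov product $(x \mid s_i)$ is bounded, so any sufficiently small neighbourhood $U_x$ of $x$ in $\Rip(\Vaug)\cup\dGP$ avoids the $i$-th horoball above some finite depth. Since $R$ is assumed large, such a $U_x$ is automatically contained in $\barERX[i]$ and the inclusion is a homeomorphism onto its image. Local-\BM{} at $x$ in the Gromov compactification of $\Rip(\Vaug)$ follows from the Bestvina-Mess argument (\cref{thm:BM-hypgr}) applied to the hyperbolic space $\Xaug$. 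Applying \cref{lem:map-local-BM} transports this back to $(\barERX[i], \ERX[i], x)$.

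For $y \in W_i$, I plan to apply \cref{lem:hmtpymap-local-BM} with $Y = g_i\EP_{(i)}$, $\bar{Y} = g_i\overline{\EP}_{(i)}$ and $f(y) = y$. Given a neighbourhood basis element $\RDp(U)$ of $y$, the map $f$ is defined piece by piece: on $E_i(U^\circ)$ as the inclusion; on the mapping cylinder $\Rip({U^\circ}_r^R)\times[0,1]$ by the cylinder projection $(q,t)\mapsto f_i(q)$; on the horoball vertices $(x,n) \in U^\circ \times \{0,\dots,R\}$ by $(x,n)\mapsto \iota_i(x)$; on the full subcomplex of $F_i^{-1}(F_i(U^\circ))$ by the composite $\iota_i\circ L_i\circ F_i$ extended simplicially; and on the corona $\hat{P}_i \cup W_i$ by $\bar{L}_i$. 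The return map $g$ is simply the inclusion $g_i\overline{\EP}_{(i)} \hookrightarrow \barERX[i]$. The homotopy $h$ from the identity to $g\circ f$ is built by (i) sliding every point in $\Horo(g_iP_{(i)})$ down along vertical edges to depth $0$; (ii) sending the bottom of the horoball through the mapping cylinder to $g_i\EP_{(i)}$; (iii) for vertices of $\Rip(\VaugIR{i}) \cap F_i^{-1}(F_i(U^\circ))$ lying outside the $i$-th horoball, first moving them along a geodesic in $\Xaug$ down to the bottom of the horoball before applying (i) and (ii).

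The main obstacle is verifying that this homotopy is \emph{close} to the projection onto the first coordinate, as required by \cref{lem:hmtpymap-local-BM}. This is precisely where the bound $R > r + D \geq D \geq 4\delta + 2$ enters: \cref{lem:HoroGeodeO-A-B} shows that vertical descent in the horoball does not increase $\Rip$-distances to other vertices of $\Vaug$ once the descent takes place below depth $R - D$, and the $\delta$-hyperbolicity of $\Xaug$ controls the $F_i$-projection of the geodesics used in step (iii) so that they lie within a bounded neighbourhood of $U^\circ$. Once these estimates are set up, $h$ is close to the projection, \cref{lem:hmtpymap-local-BM} applies, and local-\BM{} at $y \in (\barERX[i], \ERX[i])$ reduces to local-\BM{} at $y \in W_i \subset \overline{\EP}_{(i)}$, which holds by hypothesis.
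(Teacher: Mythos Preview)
Your treatment of the conical case $x\in\hat P_i$ is correct and coincides with the paper's argument (\cref{prop:one-pt-co-BM}): a small Gromov neighbourhood of $x$ avoids the deep part of the $i$-th horoball, so \cref{lem:map-local-BM} transfers local-\BM{} from $\overline{\Rip(\Vaug)}$.

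For $y\in W_i$, however, your plan to invoke \cref{lem:hmtpymap-local-BM} does not go through. The obstruction is in step~(iii) of your homotopy $h$. The piece~(\ref{item:FinvFUo}) of $\RDp(U)$, namely the full subcomplex on $F_i^{-1}(F_i(U^\circ))$, is an \emph{unbounded} subset of $\ERX[i]$: it contains, for every $j\neq i$, vertices arbitrarily deep in $\Horo(g_jP_{(j)})$ whose $F_i$-image happens to lie in $F_i(U^\circ)$. Moving such a vertex along a geodesic in $\Xaug$ to the level set $g_iP_{(i)}\times\{0\}$ is a displacement of order $d_X(v,g_iP_{(i)})$, which is not uniformly bounded over $v\in F_i^{-1}(F_i(U^\circ))$. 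Hence $h$ is \emph{not} close to the first-coordinate projection, and the hypothesis of \cref{lem:hmtpymap-local-BM} fails. Controlling the $F_i$-projection of these geodesics (which is what $\delta$-hyperbolicity gives you) is not the same as controlling their length in $\ERX[i]$, and it is the latter that ``close'' requires. There is also a secondary issue: your map $f$ is defined piecewise by different formulas on overlapping pieces (e.g.\ $\iota_i(x)$ versus $\iota_i\circ L_i\circ F_i$ on $U^\circ\times\{0\}$), and these agree only coarsely, not on the nose.

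The paper avoids this by \emph{not} attempting a global retraction. Instead it argues directly from the definition of local-\BM{}: given a single finite subcomplex $\eta\colon K\to\RDp(V)$, \cref{lem:geodesic-contraction} pushes $\eta(K)$ into the mapping-cylinder piece by an iterated Bestvina--Mess style contraction toward the basepoint $O_K=(i,g_i,R)$, moving one farthest vertex at a time by $[D/2]$ (this is where \cref{lem:HoroGeodeO-A-B} and the bound $D\geq 4\delta+2$ enter, to preserve adjacencies at the interface $K''\setminus K'$). The cylinder projection then lands $\eta$ in $g_i\EP_{(i)}$, and \cref{lem:contraction-in-P} keeps the image inside a controlled neighbourhood so that the \BM{} hypothesis on $(\overline{\EP}_{(i)},\EP_{(i)})$ finishes the contraction. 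The total displacement here is unbounded as $K$ varies, but that is harmless because one only needs to contract each finite $K$ separately.
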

Here we remark that the pair $(\barERX[i],\ERX[i])$ does not satisfies f-\BM{} 
since $\ERX[i]$ has an infinite dimension.


\begin{lemma}
\label{lem:geodesic-contraction}
 Suppose $D\geq 4\delta + 2$.
 Let $x\in W_i$ and let $V$ be a neighbourhood of $x$ in
 $g_i\overline{P_{(i)}}$.  Let $K$ be a finite simplicial
 complex and $\eta\colon K\to \ERX[i]$ be a simplicial map with
 $\eta(K) \subset \RDp(V)$. Then there exists a homotopy 
 $H\colon K\times [0,1] \to \RDp(V)\cap \ERX[i]$ such that $H(-,0) = \eta(-)$ and 
 $H(-,1)$ is a simplicial map 
\begin{align*}
  \eta'\colon K\to \RDp(V) \cap \left\{\Rip(V[i])_r^R\times[0,1]
 \cup_{f_i} g_i\EP_{(i)} \right\}
\end{align*}
 Moreover, for $p\in K$, if $\eta(p) \in \left\{\Rip(V[i])^R_{r+D})\times[0,1]
 \cup_{f_i} g_i\EP_{(i)} \right\}$, 
 then $H(p,t) = \eta(p)$ for all $t\in [0,1]$.
\end{lemma}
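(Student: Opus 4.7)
The plan is to construct $H$ by defining a pushing map $\pi$ on the vertices of $\ERX[i]\cap\RDp(V)$ that moves each vertex to depth $r+D$ inside the horoball $\Horo(g_iP_{(i)})$, and then interpolate between $\eta$ and $\pi\circ\eta$ along vertical geodesics in the horoball. The moreover clause will be automatic because $\pi$ fixes the ``deep part'' $\Rip(V[i])^R_{r+D}\times[0,1]\cup_{f_i} g_i\EP_{(i)}$ by construction.

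First I would define $\pi$ on vertices as follows: vertices of the deep part are fixed; for a vertex $v=(p,l)$ of $\Rip(\VaugIR{i})$ with $p\in g_iP_{(i)}$ and $0\leq l< r+D$, set $\pi(v)\coloneqq (p,r+D)$; for the remaining vertices of $\RDp(V)$ (those in $G\setminus g_iP_{(i)}$ or in $g_jP_{(j)}\times\N$ with $j\neq i$, which enter $\RDp(V)$ through items~(1) and~(5) of its definition), assign a base point $p_v\in g_iP_{(i)}\cap V^\circ$ via the composition $L_i\circ F_i$ and set $\pi(v)\coloneqq (p_v,r+D)$. Since $R>r+D$, the target depth is always admissible, and by construction $\pi(v)$ lies in $\Rip({V^\circ}_r^R)\times\{0\}$ whenever $v\in \RDp(V)$.

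Next I would verify that $\pi$ extends to a simplicial map. Let $v,w$ span an edge of $\Rip(\VaugIR{i})$, so that $d_{\Xaug}(v,w)\leq D$. Applying Lemma~\ref{lem:HoroGeodeO-A-B} with base point $O$ taken sufficiently deep in $\Horo(g_iP_{(i)})$, the two endpoints can be pushed vertically up to depth $r+D$ without increasing their distance, so that $d_{\Horo}(\pi(v),\pi(w))\leq D$ and the images still span a Rips edge. Here the condition $D\geq 4\delta+2$ is what permits the choice of $O$ satisfying the hypothesis $R>s+D+2$ of Lemma~\ref{lem:HoroGeodeO-A-B}, and is also used to translate between the $\Xaug$-distance and the intrinsic horoball distance via thin-triangle estimates inside the hyperbolic space $\Xaug$. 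I would then construct $H$ by sliding each $\eta(v)$ along a vertical geodesic in $\Horo(g_iP_{(i)})$ from $\eta(v)$ to $\pi(\eta(v))$ (with vertices outside the horoball first traversing a single Rips edge into it), parametrised linearly in $t$ and extended simplicially across $K\times[0,1]$ by standard prism subdivisions of each simplex.

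Finally I would check $H(K\times[0,1])\subset \RDp(V)$ by tracing the trajectories through the five pieces of $\RDp(V)$: vertical motion within $\Horo(g_iP_{(i)})$ stays in items~(2)--(3); entry from the Cayley graph or from other horoballs is absorbed by items~(1) and~(5); and the terminal positions lie in $\Rip({V^\circ}_r^R)\times[0,1]\cup E_i(V^\circ)$ via items~(3)--(4). The main obstacle is establishing the simpliciality of $\pi$, which requires a careful case analysis of Lemma~\ref{lem:HoroGeodeO-A-B} depending on where the two endpoints of each Rips edge sit inside $\ERX[i]$; once this is in place, the rest of the verification is routine.
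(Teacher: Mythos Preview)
Your proposal differs substantially from the paper's argument and has a genuine gap. The paper follows the iterative Bestvina--Mess contraction: fix the deep base point $O_K=(i,g_i,R)$, pick the vertex $p$ of $K'=\eta^{-1}(\Rip(\VaugIr{i}{r}))$ whose image is farthest from $O_K$, and replace $\eta(p)$ by the point $\hat p$ on the geodesic $[O_K,\eta(p)]$ at distance $[D/2]$ from $\eta(p)$. Contiguity of the resulting map with $\eta$ is checked neighbour by neighbour: the $\delta$-hyperbolicity of $\Xaug$ handles neighbours $q$ with $d_X(O_K,\eta(q))\le d_X(O_K,\eta(p))$, while Lemma~\ref{lem:HoroGeodeO-A-B} handles the remaining neighbours, which by maximality of $p$ must lie strictly deeper in $\Horo(g_iP_{(i)})$. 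Finitely many iterations then drive all of $\eta(K)$ into the mapping-cylinder piece.

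The gap in your one-shot push is that $\RDp(V)$ contains, through item~(a) of its description, vertices lying arbitrarily deep in horoballs $\Horo(g_jP_{(j)})$ with $j\neq i$. For such a vertex $v$ one has $d_X\bigl(v,\Horo(g_iP_{(i)})\bigr)$ as large as one likes, so there is no ``single Rips edge into'' the $i$-th horoball and your homotopy cannot even begin there. The same obstruction blocks the simpliciality of $\pi$: for an edge $\{v,w\}$ with both endpoints deep in a foreign horoball, Lemma~\ref{lem:HoroGeodeO-A-B} (which concerns three vertices of a single horoball) is inapplicable, and no Lipschitz bound on $L_i\circ F_i$ tied to the fixed parameters $D,r$ is on offer. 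The paper's iterative scheme is precisely what gets around this: vertices far from $\Horo(g_iP_{(i)})$ are dragged toward $O_K$ one $[D/2]$-step at a time along geodesics in the full hyperbolic space $\Xaug$, with the first case of inequality~\eqref{eq:xKetaq} --- pure $\delta$-hyperbolicity, not horoball geometry --- doing the essential work that your simultaneous vertical push cannot reproduce.
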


\begin{proof}
The proof is based on the arguments in \cite[Section 4.2.]{MR1086648}.
We denote by $d_X$ the graph metric of the augmented space $\Xaug$.
Set $O_K\coloneqq(i,g_i,R)$, and
\begin{align*}
 K'&\coloneqq \eta^{-1}(\Rip(\VaugIr{i}{r})),\\
 K''&\coloneqq \eta^{-1}(\Rip(\VaugIr{i}{r+D})).
\end{align*}
If $K'$ is empty, we have nothing to do, so we assume that 
$K'\neq \emptyset$.
We choose a vertex $p$ of $K'$
such that 
\begin{align*}
 d_X(O_K,\eta(p)) = \max_{q\in K'^{(0)}}d_X(O_K,\eta(q)), 
\end{align*}
here
$K'^{(0)}$ denotes the set of vertices of $K'$. 
We choose a vertex $\hat{p}$ on a geodesic connecting $O_K$ and
$\eta(p)$ in $\Xaug$ such that
\begin{itemize}
 \item $d_X(\hat{p}, \eta(p)) = [D/2]$;
 \item $d_X(O_K,\hat{p}) = d_X(O_K,\eta(p)) - [D/2]$.
\end{itemize}

We will show that for all vertex $q$ of $K''$, we have
\begin{align}
\label{eq:xKetaq}
 d_X(\hat{p},\eta(q)) \leq 
\max \left\{D, d_X(\eta(p),\eta(q))\right\}.
\end{align}

First, we suppose that $d_X(O_K,\eta(q))\leq d_X(O_K,\eta(p))$.
Since $\Xaug$ is $\delta$-hyperbolic, 
for every vertex $q$ of $K''$, 
we have
\begin{align*}
 d_X(\hat{p},\eta(q)) \leq 
 \max \left\{d_X(O_K,\eta(q)) - d_X(O_K,\eta(p)) 
 + \left[\frac{D}{2}\right] + 2\delta,\,
    d_X(\eta(p),\eta(q)) - \left[\frac{D}{2}\right] + 2\delta\right\}.
\end{align*}
Since $d_X(O_K,\eta(q)) \leq d_X(O_K,\eta(p))$, 
$\left[D/2\right] + 2\delta \leq D$ and 
$2\delta< \left[D/2\right]$, the inequality \cref{eq:xKetaq} holds.

Now we suppose that $d_X(O_K,\eta(q))\geq d_X(O_K,\eta(p))$.
By the choice of $p$, we have $q\in K''\setminus K'$.
Thus the depth of $q$ is strictly larger than that of $p$. So we can apply
\cref{lem:HoroGeodeO-A-B}, and we obtain 
 $d_X(\hat{p},\eta(q)) \leq d_X(\eta(p),\eta(q))$. Therefore 
the inequality~\cref{eq:xKetaq} holds.

For a vertex $q$ of $K$, we set 
\begin{align*}
 \eta_1(q)\coloneqq\begin{cases}
	    \hat{p} & \text{if } \eta(q) = \eta(p);\\
	    \eta(q) & \text{else.}
	 \end{cases}
\end{align*}
We remark that $p$ is not adjacent to any 
vertices in $K\setminus K''$.
Thus the inequality (\ref{eq:xKetaq}) shows that two simplicial maps
 $\eta$ and $\eta_1$ are contiguous, so the geometric realizations of
 these maps are homotopic. Now if 
\begin{align*}
 \eta_1(K)\subset \left\{\Rip(V[i])_r^R\times[0,1]
 \cup_{f_i} g_i\EP_{(i)} \right\},
\end{align*}
then we set $\eta'=\eta_1$ and we have done. Otherwise, there exists a vertex 
$p'\in K'$ such that $\eta_1(p')\in \Rip(\VaugIr{i}{r})$.
We repeat this procedure finitely many times.
\end{proof}

Let $\epsilon$ be a positive number. For a subset $L\subset g_iP_{(i)}$,
we denote by $N(i,L,\epsilon) $ the $\epsilon$-neighbourhood of $L$ in
$g_iP_{(i)}$ with respect to the distance $d_i$.
We also define
\begin{align}
\label{eqdef:EN}
 \EN(i,L,\epsilon)\coloneqq \bigcup_{g\in N(i,L,\epsilon)} g\Delta_{P_{(i)}}.
\end{align}
Here we recall that $\Delta_{P_{(i)}}$ is the fundamental domain of
$\EP_{(i)}$.  Let $\pi_i\colon g_iP_{(i)} \times \N \to g_iP_{(i)}$ be a
projection to the first coordinate.  Let $K$ be a finite subcomplex of
$R(V[i])$.  We denote by $K^{(0)}$ the set of vertices of $K$.

\begin{lemma}
\label{lem:contraction-in-P}
 Let $f_i\colon R(V[i]) \to g_i\EP_{(i)}$ be a 
map defined by (\ref{def:fi}).
There exists a constant 
$\epsilon =\epsilon(R)$ such that for any subcomplex  $K$ of 
$R(V[i])^R$, we have that 
$f_i(K)\subset \EN(i,\pi_i(K^{(0)}),\epsilon)$.
\end{lemma}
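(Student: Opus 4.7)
The plan is to exploit the $P_{(i)}$-equivariance of $f_{P_{(i)}}$ together with the cocompactness of the $P_{(i)}$-action on bounded-depth parts of the combinatorial horoball.

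First I would reduce to the case of trivial coset representative: since $f_i = g_i \circ f_{P_{(i)}} \circ g_i^{-1}$, and the maps $V_i$, $\pi_i$ are the corresponding $g_i$-conjugates of $V_{P_{(i)}}$ and the first-coordinate projection, it suffices to prove the analogous bound for $f \coloneqq f_{P_{(i)}}$ with $P \coloneqq P_{(i)}$ and translate by $g_i$ at the end. Next I would observe that $d_P = \distG|_{P\times P}$ is integer-valued and every $d_P$-ball in $P$ is finite, since it sits inside a finite word-ball of $G$. Combined with the horoball geodesic description in \cref{lem:HoroGeodeGM}, this implies that the $P$-coordinates of the vertices of any simplex of $\Rip(V(P,\{P\},d_P))$ having all vertices of depth $\le R$ lie in a $d_P$-ball of radius bounded by some $C_0 = C_0(D,R)$, and therefore there are only finitely many $P$-orbits of such simplices under the left $P$-action, hence also finitely many $P$-orbits of their barycenters in $\Rip^{(n)}(V(P,\{P\},d_P))^R$. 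For each orbit pick a representative barycenter $b_\alpha$ of a simplex $\tau_\alpha$, and let $h_\alpha\in P$ satisfy $f(b_\alpha)\in h_\alpha\Delta_P$. Set
\[
\epsilon_0 \coloneqq \max_\alpha d_P\bigl(h_\alpha,\,\pi(\tau_\alpha^{(0)})\bigr),
\]
which is finite because there are only finitely many $\alpha$.

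Then, by $P$-equivariance of $f$, any barycenter $b = p\cdot b_\alpha$ satisfies $V_P(f(b)) = p h_\alpha$, and left-invariance of $d_P$ gives $d_P(V_P(f(b)),\pi((p\tau_\alpha)^{(0)}))\le \epsilon_0$. For an arbitrary simplex $\tilde\sigma$ of $\Rip^{(n)}(V(P,\{P\},d_P))^R$, corresponding to a flag $\tau_0\subsetneq\dots\subsetneq\tau_m$ of simplices of depth $\le R$, the image $f(\tilde\sigma)$ lies in a single simplex of $\EP$; since $\EP$ is $P$-cofinite and $V_P$ is $P$-equivariant, simplices of $\EP$ have a uniformly bounded $V_P$-diameter $\epsilon_1$. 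Combining these two estimates, every point of $f(\tilde\sigma)$ has $V_P$-image at $d_P$-distance at most $\epsilon\coloneqq \epsilon_0+\epsilon_1$ from an element of $\pi(\tilde\sigma^{(0)})$ (where $\pi$ on a barycenter is interpreted as the $\pi$-image of its supporting original simplex), hence $f(\tilde\sigma)\subset \bigcup_{g\in N(\pi(\tilde\sigma^{(0)}),\epsilon)}g\Delta_P$. Taking unions over the simplices of $K$ and then translating back by $g_i$ yields $f_i(K)\subset \EN(i,\pi_i(K^{(0)}),\epsilon)$.

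The main obstacle is pure bookkeeping: $f_i$ is defined on the $n$-th barycentric subdivision, so the vertex set $K^{(0)}$ consists of barycenters rather than horoball vertices, and one must specify carefully how $\pi_i$ acts on $K^{(0)}$---for instance by pushing each barycenter to the vertex set of its supporting original simplex, or by composing with a $0$-skeleton map back to the original Rips complex. The geometry is easy once the orbit-finiteness observation is in hand, but one needs to confirm that $\epsilon$ can be chosen depending only on $R$ and on the fixed ambient data ($P$, $D$, $n$, $\EP$), not on the particular subcomplex $K$.
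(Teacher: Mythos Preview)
Your proposal is correct and follows essentially the same strategy as the paper: cocompactness of the $P_{(i)}$-action on the depth-$\le R$ part of the Rips complex, combined with $P_{(i)}$-equivariance of $f_{P_{(i)}}$ and left-invariance of $d_P$, gives a uniform bound $\epsilon=\epsilon(R)$.

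The only difference is packaging. The paper picks a single fundamental domain $\FD$ for the $P_{(i)}$-action on $R(V[(i)])^R$ containing a neighbourhood of $\{(e,n):1\le n\le R\}$, defines $\epsilon$ so that $g_i f_{P_{(i)}}(\FD)\subset \EN(i,\{g_i\},\epsilon)$, and then observes $K\subset\bigcup_{g\in\pi_i(K^{(0)})} g\FD$; equivariance gives the result in one line. You instead enumerate finitely many orbit representatives of simplices and take a maximum, then add a separate $\epsilon_1$ for simplex diameter in $\EP$. These are equivalent: your $\epsilon_0+\epsilon_1$ is what the paper absorbs into the single constant $\epsilon$ coming from the compact set $f_{P_{(i)}}(\FD)$. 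Your version is slightly more explicit about the barycentric-subdivision bookkeeping (how $\pi_i$ should be read on barycenters), a point the paper leaves implicit.
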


\begin{proof} 
We remark that a subgroup $P_{(i)}$ acts cocompactly on 
$R(V[(i)])^R$.
 Let $\FD$ be its fundamental domain. We suppose that $\FD$ contains
a neighbourhood of 
\begin{align*}
 \{(i,e,n): n\in \N, 1\leq n\leq R\}.
\end{align*}
There exists a constant
 $\epsilon = \epsilon(R)$ 
 such that 
 \begin{align}
  g_if_{P_{(i)}}(\FD) \subset \EN(i,\{g_i\},\epsilon).
 \end{align}

Let $K$ be a finite subcomplex of $R(V[i])^R$.
We have
 \begin{align*}
  K \subset \bigcup_{g\in \pi_i(K^{(0)})}g\FD; 
 \end{align*}
Now we have 
\begin{align*}
 f_i(K)&\subset f_i\left(\bigcup_{g\in \pi_i(K^{(0)})}g\FD\right)
 = \bigcup_{g\in \pi_i(K^{(0)})}gf_{P_{(i)}}(\FD)\\
 &\subset  \bigcup_{g\in \pi_i(K^{(0)})}gg_i^{-1}\EN(i,\{g_i\},\epsilon)\\
 &\subset \EN\left(i, \pi_i(K^{(0)}),\epsilon\right).
\end{align*}
\end{proof}


\begin{proposition}
\label{prop:one-pt-co-BM}
Let $x\in \barERX[i]\setminus \ERX[i]$
be a point. If $x\in \dGP\setminus \{s_i\}$, then the triplet 
$(\barERX[i],\ERX[i],x)$ satisfies local-\BM{}.
\end{proposition}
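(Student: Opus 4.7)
The plan is to apply \cref{lem:map-local-BM} in order to reduce local-\BM{} at $x$ for $(\barERX[i],\ERX[i])$ to local-\BM{} at $x$ for the pair $(\Rip(\Vaug)\cup\dGP,\,\Rip(\Vaug))$, and then to invoke the classical geodesic-contraction argument of Bestvina--Mess inside the $\delta$-hyperbolic space $\Xaug$. The crucial point is that $x$ lies in the unblown part of the corona, so a small enough neighbourhood of $x$ sees nothing of the blow-up.

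Concretely, since $x\in\dGP\setminus\{s_i\}$ is distinct from the parabolic point $s_i$ of $\Horo(g_iP_{(i)})$, and since by the topology of $\partial X[i]$ from \cite[Definition~7.13]{boundary} the set $W_i$ accumulates only at $s_i$, I first choose a neighbourhood $U$ of $x$ in $\overline{X}[i]$ with $U\cap W_i=\emptyset$ and such that $U\cap \Vaug\subset \VaugIR{i}$; that is, $U$ misses every vertex of the $i$-th horoball of depth exceeding $R$. With this choice, $U_x\coloneqq\RDc(U)$ is a neighbourhood of $x$ in $\barERX[i]$ disjoint from both the mapping cylinder $\Rip(V[i])_r^R\times[0,1]$ and from $g_i\EP_{(i)}$, so it lies entirely inside $\Rip(\VaugIR{i})\cup\hat{P}_i$. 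The tautological inclusion embeds this neighbourhood homeomorphically into $\Rip(\Vaug)\cup\dGP$, sending $x$ to its image in $\dGP$; applying \cref{lem:map-local-BM} with $Y=\Rip(\Vaug)$ and $\bar{Y}=\Rip(\Vaug)\cup\dGP$, the problem reduces to local-\BM{} for $(\Rip(\Vaug)\cup\dGP,\,\Rip(\Vaug),\,x)$.

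This last statement is a direct instance of the Bestvina--Mess argument \cite[Proposition~2.1]{MR1096169}: for any neighbourhood $U'$ of $x$ in $\Rip(\Vaug)\cup\dGP$, a smaller neighbourhood $V'$ can be contracted within $U'$ by pushing each finite subcomplex towards $x$ along geodesic rays in $\Xaug$ converging to $x$, and the proof there uses only $\delta$-hyperbolicity of the ambient space, not the group action on it. The main obstacle, and the reason the present proposition is not completely trivial, lies in the first step: one must verify from \cite[Definition~7.13]{boundary} that the blown-up topology of $\partial X[i]$ at $x$ really agrees with the Gromov topology of $\dGP$ at $x$, so that the inclusion invoked in \cref{lem:map-local-BM} is a genuine homeomorphism onto an open subset. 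Once this is secured, the remainder is pure hyperbolic geometry in $\Xaug$ and the peripheral data $(\overline{\EP_{(i)}},W_i)$ plays no role; in contrast, the companion case $y\in W_i$ will require the full force of \cref{lem:geodesic-contraction} and \cref{lem:contraction-in-P} together with the hypothesis that $(\overline{\EP_{(i)}},\EP_{(i)})$ satisfies \BM{}.
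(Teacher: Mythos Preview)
Your proposal is correct and follows essentially the same route as the paper: both arguments choose a neighbourhood of $x$ in $\barERX[i]$ that avoids $W_i$ and the mapping-cylinder part, observe that this neighbourhood is canonically homeomorphic to an open set in the Gromov compactification $\overline{\Rip(\Vaug)}$, apply \cref{lem:map-local-BM}, and then invoke the Bestvina--Mess result for the $\delta$-hyperbolic space $\Xaug$. The paper compresses the first step into a single ``it is clear that'' sentence, whereas you spell out why such a neighbourhood exists and flag the one point requiring care (that the blown-up topology at $x$ agrees with the Gromov topology); this added detail is helpful but does not change the strategy.
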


\begin{proof}
 Since $\Xaug$ is hyperbolic in the sense of Gromov, by the proof of
 \cite[Theorem 1.2.]{MR1096169}, the pair
 $(\overline{\Rip(\Vaug))},\Rip(\Vaug))$ satisfies the condition
 \BM{}. Here we denote by $\overline{\Rip(\Vaug))}$ the Gromov
 compactification of $\Rip(\Vaug)$.

 Let $x\in \dGP\setminus \{s_i\}$. It is clear that there
 exists a neighbourhood $U$ of $x$ in $\barERX[i]$ such that $U$ is
 canonically homeomorphic to the corresponding subset of
 $\overline{\Rip(\Vaug)}$. Thus by Lemma~\ref{lem:map-local-BM}, the triplet
 $(\barERX[i],\ERX[i],x)$ satisfies local-\BM{}.
\end{proof}

\begin{proposition}
\label{prop:one-pt-pa-BM}
Suppose that the pair $(\overline{\EP_{(i)}},\EP_{(i)})$ satisfies the
condition \BM.  Let $x\in \barERX[i]\setminus \ERX[i]$
be a point. If $x\in W_i$, then the triplet 
$(\barERX[i],\ERX[i],x)$ satisfies local-\BM{}.
\end{proposition}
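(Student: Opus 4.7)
The plan is to verify local-\BM{} at $x \in W_i$ directly. The idea is to push any finite subcomplex of a small neighbourhood of $x$ in $\barERX[i]$ first into the mapping-cylinder slab, then to slide it across the cylinder into $g_i\EP_{(i)}$, and finally to contract it using the \BM{} property assumed on $(\overline{\EP_{(i)}},\EP_{(i)})$. This latter property transfers to $(g_i\overline{\EP_{(i)}}, g_i\EP_{(i)})$ via the homeomorphism induced by left multiplication by $g_i$, so in particular the triplet $(g_i\overline{\EP_{(i)}}, g_i\EP_{(i)}, x)$ satisfies local-\BM{}.

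Given any neighbourhood $U$ of $x$ in $\barERX[i]$, I may assume $U = \RDp(V_1) \cap \barERX[i]$ for a neighbourhood $V_1$ of $x$ in $g_i\overline{P_{(i)}}$. Applying local-\BM{} for $(g_i\overline{\EP_{(i)}}, g_i\EP_{(i)}, x)$ first supplies a neighbourhood $V_1'$ of $x$ in $g_i\overline{\EP_{(i)}}$ that is weakly contractible inside a prescribed subset of $\RDp(V_1) \cap g_i\EP_{(i)}$. I then choose a smaller neighbourhood $V_2$ of $x$ in $g_i\overline{P_{(i)}}$, and set $V \coloneqq \RDp(V_2) \cap \barERX[i]$. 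Given a finite simplicial map $\eta\colon K \to \ERX[i]$ with image in $V \cap \ERX[i]$, I build a three-stage homotopy: (i) \cref{lem:geodesic-contraction} moves $\eta$ within $\RDp(V_2) \cap \ERX[i]$ to a map $\eta_1$ that lands in the slab $\Rip(V[i])_r^R \times [0,1] \cup_{f_i} g_i\EP_{(i)}$; (ii) the linear slide $t \mapsto 1$ in the cylinder coordinate pushes $\eta_1$ via $f_i$ to a map $\eta_2$ into $g_i\EP_{(i)}$; (iii) weak contractibility of $V_1'$ collapses $\eta_2$ to a point. Concatenation of these three stages yields the required contraction of $K$ in $U \cap \ERX[i]$.

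The main obstacle is size calibration: one must choose $V_2$ small enough that stages (i)--(ii) deliver $\eta_2$ into $V_1' \cap g_i\EP_{(i)}$. Stage (i) is by construction confined to $\RDp(V_2) \cap \ERX[i] \subset \RDp(V_1) \cap \ERX[i]$. For stage (ii), \cref{lem:contraction-in-P} supplies the key estimate $f_i(K') \subset \EN(i, N(i, \pi_i({K'}^{(0)}), \epsilon(R)), \epsilon(R))$ for any subcomplex $K'$ of $\Rip(V[i])^R$, so shrinking $V_2$ forces $\eta_2(K)$ into $V_1' \cap g_i\EP_{(i)}$. Balancing these estimates with the local-\BM{} data for $g_i\overline{\EP_{(i)}}$ to obtain a single choice of $V$ working uniformly for every finite $K$ is the technical heart of the argument; once this calibration is done, the concatenation of the three homotopies automatically stays inside $U \cap \ERX[i]$, completing the verification.
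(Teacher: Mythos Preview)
Your proposal is correct and follows essentially the same three-stage strategy as the paper's proof: push into the mapping-cylinder slab via \cref{lem:geodesic-contraction}, collapse along the cylinder coordinate via $f_i$ (the paper calls this map $\theta$), then contract in $g_i\EP_{(i)}$ using the assumed \BM{} property, with \cref{lem:contraction-in-P} supplying the calibration of $V_2$. The only minor slip is your quoted estimate: \cref{lem:contraction-in-P} gives $f_i(K') \subset \EN(i, \pi_i({K'}^{(0)}), \epsilon)$ directly, without the nested $N(i,\cdot,\epsilon)$.
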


\begin{proof}
Let $U$ be a neighbourhood of $x$ in $\barERX[i]$.  There exists a
 neighbourhood $U_1$ of $x$ in $g_i\overline{P_{(i)}} = g_iP_{(i)}\cup
 W_i$ such that $\RDp(U_1)\subset U$. Since the pair
 $(\overline{\EP_{(i)}},\EP_{(i)})$ satisfies the condition \BM, there
 exists a neighbourhood $V_1$ of $x$ in $g_i\overline{P_{(i)}}$ such
 that $E_i(V_1)$ is weakly contractible in $E_i(U_1)$ in the sense of
 Definition~\ref{def:BMconditions}.
Let $\epsilon$ be a constant in Lemma~\ref{lem:contraction-in-P}.
There exists a neighbourhood $V_2$ of $x$ in $g_i\overline{P_{(i)}}$ such that 
$EN(i,V_2\cap g_iP_{(i)},\epsilon)$ is contained in $V_1$.

Set $V\coloneqq \RDp(V_2) \subset U$.
Let $K$ be a finite simplicial complex and let $\eta\colon K\to V$ be 
a simplicial map. By Lemma~\ref{lem:geodesic-contraction}, 
$\eta$ is homotopic to a simplicial map
\[
 \eta'\colon K\to V \cap \left\{\Rip(V[i])_r^R\times[0,1]
 \cup_{f_i} g_i\EP_{(i)} \right\}.
\]


We define a map 
\begin{align*}
 \theta \colon \Rip(V[i])_r^R\times[0,1]
 \cup_{f_i} g_i\EP_{(i)}  \longrightarrow
\Rip(V[i])_r^R\times[0,1]
 \cup_{f_i} g_i\EP_{(i)},
\end{align*}
by $\theta(q,t)\coloneqq f_i(q)$ for $(q,t) \in \Rip(V[i])_r^R\times[0,1)$ and
$\theta(q')\coloneqq q'$ for $q'\in g_i\EP_{(i)}$.
It is clear that $\theta$ is homotopic to the identity.

Set $\eta''\coloneqq \theta\circ\eta'$.
By Lemma~\ref{lem:contraction-in-P}, the image $\eta''(K)$ lies on
$V_1$. Since $V_1$ is weakly contractible in $U_1$,
the map $\eta''$ is homotopic to a constant map in $E_i(U_1)$.
It follows that the map $\eta$ is homotopic to a constant map in 
$U$.
\end{proof}


\begin{proof}[Proof of \cref{prop:BM-single}]
 Proposition~\ref{prop:one-pt-co-BM}
 and Proposition~\ref{prop:one-pt-pa-BM} imply
 Proposition~\ref{prop:BM-single}.
\end{proof}

\subsection{Projective limit}
\label{sec:projective-limit}
Set
\begin{align}
\label{eqdef:ERXn}
 \ERX_n &\coloneqq \Rip(\Vaug)^R \cup 
 \bigcup_{j=1}^{n}
  \left\{\Rip(V[j])_r^R)\times[0,1]
  \cup_{f_j} g_j\EP_{(j)} \right\}\\
\notag
&\cup
\bigcup_{j=n+1}^{\infty}
  \left\{\Rip(V[j]_r^R)\times[0,1]
  \cup \Rip(V[j])_r \times\{1\}\right\}.
\end{align}
In \cite[Appendix A]{relhypgrp}, it is shown that the space
\begin{align*}
 \EG \coloneqq \ERX_\infty =  
  \Rip(\Vaug)^R 
   \cup \bigcup_{j=1}^{\infty}
   \left\{\Rip(V[j]_r^R)\times[0,1]
   \cup_{f_j} g_j\EP_{(j)} \right\}
\end{align*}
is a universal space for proper actions.
For $j\in \N$, we choose a homotopy

\begin{align}
 \label{eqdef:h_j}
  h_j \colon  \Rip(V[j])_r \times
[0,1] \to \Rip(V[j])_r
\end{align}

such that $h_j(-,0)$ is the identity and $h(-,1) = f_j^{-1}\circ f_j$.
Here, $f_j^{-1}$ is a homotopy inverse of $f_j$. 

For $n < m$, 
we define a continuous map $\theta_n^m\colon \ERX_m \to \ERX_n$ 
by the following way.
\begin{itemize}
 \item The restriction to $\Rip(\Vaug)^R$ is the identity.
 \item For $m< j$, the restriction to
       \[
	\Rip(V[j])_r^R\times[0,1] \cup
       \Rip(V[j])_r \times\{1\}
       \] 
       is the identity.
 \item For $n < j \leq m$,
       $\Rip(V[j])_r^R\times [0,1) \ni
       (x,t) \mapsto (h_j(x,t),t)\in \Rip(V[j])_r^R\times [0,1)$.
 \item For $n< j \leq m$, $g_j\EP_{(j)} \ni p 
       \mapsto (f_j^{-1}(p),1) \in  \Rip(V[j])_r\times \{1\}$.
 \item For $1\leq j\leq  n$, the restriction to
       $\Rip(V[j])_r^R\times[0,1]
       \cup_{f_j} g_j\EP_{(j)}$ is the identity.
\end{itemize}

Set $\theta_n \coloneqq \theta_n^\infty\colon \ERX_\infty \to \ERX_n$.
We define a homotopy inverse 
\begin{align}
 \label{eqdef:tau_n}
 \tau_n\colon \ERX_n \to \EG = \ERX_\infty
\end{align}
of
$\theta_n$ as follows.
\begin{itemize}
 \item The restriction to $\Rip(\Vaug)^R$ is the identity.
 \item For $j\in \N$, the restriction to
       $\Rip(V[j])_r^R\times [0,1)$ is the
       identity.
 \item For $j> n$, $\Rip(V[j])_r \times \{1\}\ni (q,1) \mapsto
       f_j(q) \in g_j\EP_{(j)}$.
 \item For $j\leq n$, the restriction to
       $g_j\EP_{(j)}$ is the identity.
\end{itemize}

\begin{lemma}
\label{lem:H_n-construction}
By elementary obstruction theory, we can construct a homotopy 
$H_n\colon \EG\times [0,1] \to \EG$ such that 
\begin{enumerate}
 \item $H_n(p,0) = p$ for all $p\in \EG$;
 \item $H_n(p,1) = \tau_n\circ \theta_n(p)$ for all $p\in \EG$;
 \item \label{item:H_n-close}
       $H_n$ is close to the projection 
       $\EG\times [0,1]\ni (p,t) \mapsto p\in \EG$.
\end{enumerate}
\end{lemma}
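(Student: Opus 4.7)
The plan is to build $H_n$ piecewise along the decomposition $\EG = \Rip(\Vaug)^R \cup \bigcup_{j\in\N}E_j$, where $E_j\coloneqq \Rip(V[j])_r^R\times[0,1]\cup_{f_j} g_j\EP_{(j)}$ is the $j$-th mapping cylinder, attached to $\Rip(\Vaug)^R$ along the slice $\Rip(V[j])_r^R\times\{0\}$. Inspecting the defining formulae for $\theta_n$ and $\tau_n$, one sees that $\tau_n\circ\theta_n$ already equals the identity on $\Rip(\Vaug)^R$ and on every $E_j$ with $j\leq n$. On these pieces I would set $H_n$ to be the constant homotopy, reducing the construction to the pieces $E_j$ with $j>n$.

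On such a piece, $\tau_n\circ\theta_n$ restricts to $(x,t)\mapsto(h_j(x,t),t)$ on the cylinder $\Rip(V[j])_r^R\times[0,1]$ (subject to the gluing $(q,1)\sim f_j(q)$) and to $p\mapsto f_j\circ f_j^{-1}(p)$ on $g_j\EP_{(j)}$, and it coincides with the identity on the attachment slice $\Rip(V[j])_r^R\times\{0\}$. On the cylinder I would take the explicit reparametrisation $H_j((x,t),s)\coloneqq(h_j(x,st),t)$, which starts at the identity, ends at the required map, and is stationary when $t=0$. On $g_j\EP_{(j)}$ I would construct a homotopy from $\mathrm{id}$ to $f_j\circ f_j^{-1}$ that is compatible with the cylinder construction at the gluing locus $t=1$, namely $H_n(f_j(x),s)=f_j(h_j(x,s))$. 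This compatibility specifies the homotopy on the subcomplex $f_j(\Rip(V[j])_r^R)$, and since $g_j\EP_{(j)}$ is contractible and the two endpoint maps are homotopic, elementary obstruction theory provides an extension over the remaining cells. Gluing yields a continuous $H_j\colon E_j\times[0,1]\to E_j$ stationary on $\Rip(V[j])_r^R\times\{0\}$, and assembling all the $H_j$ with the constant pieces produces $H_n\colon \EG\times[0,1]\to\EG$ satisfying (a) and (b) by construction.

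The main obstacle is property (\ref{item:H_n-close}), which requires the coarse displacement of $H_n$ to be uniformly bounded in $j$. The decisive observation is that $\famP=\{P_1,\ldots,P_k\}$ is a finite family, so every $f_j$, homotopy inverse $f_j^{-1}$, and homotopy $h_j$ is the $g_j$-conjugate of one of finitely many $P_i$-equivariant maps or homotopies on a $P_i$-finite simplicial complex. Fixing these finitely many equivariant choices once and for all yields a constant $C>0$ bounding the coarse displacement of $f_j^{\pm 1}$ and $h_j$ in the pullback coarse structure on $\VaugIR{j}$, uniformly in $j$. The cylinder formula inherits displacement at most $C$, and the $g_j\EP_{(j)}$-homotopies, constructed cellularly and $(g_jP_{(j)}g_j^{-1})$-equivariantly from the same finite data, can be chosen with displacement bounded by a multiple of $C$. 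Since $H_n$ is stationary elsewhere, this gives a uniform bound over all of $\EG$, which is exactly the required closeness to the projection.
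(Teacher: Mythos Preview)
The paper does not prove this lemma: it only refers to the construction in \cite[Section~3]{MR1388312}. Your proposal is therefore more explicit than what the paper provides, and the overall strategy is sound. In particular, your observation that $\tau_n\circ\theta_n$ is already the identity on $\Rip(\Vaug)^R$ and on each $E_j$ with $j\le n$ is correct, and your argument for property~(\ref{item:H_n-close})---that the finitely many $P_i$-equivariant maps $f_{P_i}$, their homotopy inverses, and the homotopies $h_{P_i}$ have uniformly bounded displacement on $P_i$-cocompact complexes, and that conjugation by $g_j$ preserves this bound---is exactly the point.

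There is, however, a technical gap in the explicit cylinder formula $H_j((x,t),s)=(h_j(x,st),t)$. In $E_j$ the slice $t=1$ is collapsed by $f_j$, so for this formula to descend continuously to $E_j$ you need $f_j(h_j(x,s))$ to depend only on $f_j(x)$; equivalently, $f_j\circ h_j(-,s)$ must factor through $f_j$ for every $s$. This is not automatic: in the paper $h_j$ is merely \emph{some} homotopy from the identity to $f_j^{-1}\circ f_j$, with no such compatibility imposed, and it is easy to produce examples of homotopy equivalences where the analogous condition fails. Consequently your prescribed boundary value $H_n(f_j(x),s)=f_j(h_j(x,s))$ on the image of $f_j$ need not be well-defined as a function of the point $f_j(x)\in g_j\EP_{(j)}$, and the extension step over $g_j\EP_{(j)}$ cannot get started. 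The repair is routine---either choose $h_j$ equivariantly so that the factorisation holds (possible since $f_j$ is a $P_{(j)}$-equivariant simplicial homotopy equivalence), or drop the explicit reparametrisation and build $H_j$ on the contractible piece $E_j$ purely by equivariant obstruction theory from the outset---and either route preserves your displacement bound for (\ref{item:H_n-close}).
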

For the details of the construction, see the arguments in \cite[Section
3.]{MR1388312}. 


We have a coarse compactification
\begin{align*}
 \barERX_n&\coloneqq \ERX_n \cup  \left\{\dGP 
   \setminus \bigcup_{j\leq n}\{s_j\}\right\} 
   \cup \bigcup_{j\leq n} W_j;\\
\end{align*}

For $n<m$, we extend $\theta_n^m$ to $\bar{\theta}_n^m\colon \barERX_m
\to \barERX_n$ as follows.
\begin{itemize}
 \item The restriction to $\dGP\setminus \bigcup_{j\leq m}\{s_j\}$ is
       the identity.
 \item For $n< j \leq m$, $W_j \ni x\mapsto s_j \in
       \dGP$.
 \item For $1\leq j \leq n$, the restriction to $W_j$ is
       the identity.
\end{itemize}

Since $\{\bar{\theta}_n^m\colon \barERX_m \to \barERX_n\}$ 
forms a projective system, we have a compactification
\begin{align*}
 \barEG& \coloneqq \varprojlim \overline{\ERX_n} = 
   \EG \cup \left\{\dGP \setminus \bigcup_{j\in \N}\{s_j\} \right\}
   \cup \bigcup_{j\in \N} W_j.
\end{align*}
It follows from \cite[Proposition 7.14.]{boundary} that $\barEG$ is a
coarse compactification.
Let $\bar{\theta}_n \colon \barEG \to
\barERX_n$ be the natural projection, which is the canonical extension
of $\theta_n$.


\begin{definition}
\label{def:beta_n-i}
 For $i\leq n$, we define a map
 $\beta_{n}[i]\colon \barERX_n \to \barERX[i]$
 as follows.
 \begin{itemize}
 \item $\Rip(\VaugIR{i}) \ni x \mapsto x\in \Rip(\VaugIR{i})$.
 \item For $j > n$, 
       $\Rip(V[j])_r^R\times [0,1] \ni (x,t)
       \mapsto x\in \Rip(\VaugIR{i})$.
 \item For $j > n$, $\Rip(V[j])_r\times \{1\} \ni (x,1) \mapsto x\in
       \Rip(\VaugIR{i})$.
 \item For $j \leq n, j\neq i$, $\Rip(V[j])_r^R\times [0,1) \ni (x,t) \mapsto
       h_j(x,t)\in \Rip(\VaugIR{i})$.
 \item For $j \leq n, j\neq i$, 
       $g_j\EP_{(j)} \ni x\mapsto f_j^{-1}(x) \in \Rip(\VaugIR{i})$.
 \item The restriction to $\Rip(V[i])_r^R
       \times [0,1] \cup_{f_i} g_i\EP_{(i)}$ is the identity.
 \item The restriction to $\dGP \setminus
       \bigcup_{j \leq n}\{s_j\}$ is the identity.
 \item For $j \leq n, j\neq i$, 
       $W_j \ni x \mapsto s_j \in \dGP$.
 \item The restriction to $W_i$ is the identity.
 \end{itemize}
\end{definition}

\begin{lemma}
 If $(\overline{\EP_i},\EP_i)$ satisfies the condition \BM{} for all
 $1\leq i \leq k$, then so does $(\barERX_n,\ERX_n)$ for all $n\in \N$.
\end{lemma}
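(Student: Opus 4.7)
The plan is to verify local-\BM{} at each corona point $x \in \barERX_n \setminus \ERX_n$ by transferring it from the corresponding local statement for some $(\barERX[i], \ERX[i])$, which is supplied by \cref{prop:BM-single}. The transfer uses the comparison map $\beta_n[i]$ of \cref{def:beta_n-i} together with \cref{lem:hmtpymap-local-BM}. The corona of $\barERX_n$ decomposes as $\bigl(\dGP \setminus \bigcup_{j \leq n}\{s_j\}\bigr) \sqcup \bigsqcup_{j \leq n} W_j$.

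Given a point $x$ in the corona, I pick an index $1 \leq i \leq n$ as follows: if $x \in W_i$ for some $i$, use that $i$; if $x \in \dGP \setminus \bigcup_{j \leq n}\{s_j\}$, pick any $i \leq n$, so that $x \in \dGP \setminus \{s_i\}$. Inspection of \cref{def:beta_n-i} shows that $\beta_n[i]$ fixes $x$ in both cases. Matching the neighbourhood-basis descriptions $\RDp(U)$ and $\RDc(U)$ from \cref{sec:rips-compl-augm}, one checks that for sufficiently small basic neighbourhoods $U_x$ of $x$ in $\barERX_n$, the restriction $\beta_n[i]|_{U_x}$ surjects onto a basic neighbourhood of $x$ in $\barERX[i]$. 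Indeed $\beta_n[i]$ collapses precisely the strata that are present in $\ERX_n$ but not in $\ERX[i]$: the pieces $\Rip(V[j])_r^R \times [0,1] \cup_{f_j} g_j\EP_{(j)}$ for $j \leq n$, $j \neq i$, and $\Rip(V[j])_r^R \times [0,1] \cup \Rip(V[j])_r \times \{1\}$ for $j > n$.

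To apply \cref{lem:hmtpymap-local-BM}, I construct a map $g\colon \beta_n[i](U_x) \to \barERX_n$ together with a homotopy $h\colon U_x \cap \ERX_n \times [0,1] \to \ERX_n$ from $\mathrm{id}$ to $g \circ \beta_n[i]$ that is close to the projection onto the first coordinate. Both $g$ and $h$ are built stratum-by-stratum by elementary obstruction theory, paralleling the construction of $H_n$ in \cref{lem:H_n-construction} and \cite[Section 3.]{MR1388312}. For $j \leq n$ with $j \neq i$, the contribution to $h$ combines the homotopy $h_j$ of \cref{eqdef:h_j} with a straight-line homotopy along the $[0,1]$-factor of the $j$-th mapping cylinder; for $j > n$, the contribution is a straight-line contraction of the $[0,1]$-factor. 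Each local deformation lives in a single stratum of $\ERX_n$, so the composite $h$ is close to the projection in the coarse structure induced by $V_{[i]}$ of \cref{eqdef:mapV_bib}. \cref{lem:hmtpymap-local-BM} then transfers local-\BM{} at $x$ from $(\barERX[i], \ERX[i])$ to $(\barERX_n, \ERX_n)$, and varying $x$ over the corona yields \BM{}.

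The main technical obstacle is the coarse control of $h$. One must check that each $h_j$ stays within a bounded region of the $j$-th stratum, that the collection of strata is coarsely locally finite with respect to the pullback coarse structure induced by $V_{[i]}$, and that the piecewise homotopy glues continuously across stratum boundaries. These estimates ultimately rest on the Gromov-hyperbolic geometry of $\Xaug$ already exploited in \cref{lem:geodesic-contraction}.
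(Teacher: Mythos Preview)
Your proposal is correct and follows essentially the same route as the paper: transfer local-\BM{} from $(\barERX[i],\ERX[i])$ via $\beta_n[i]$ and \cref{lem:hmtpymap-local-BM}, invoking \cref{prop:BM-single}. The one simplification you miss is that the paper first shrinks $U_x$ so that it is disjoint from the mapping-cylinder pieces $\Rip(V[j])_r^R\times[0,1]\cup_{f_j} g_j\EP_{(j)}$ for $j\leq n$, $j\neq i$; on such a neighbourhood $\beta_n[i]$ admits an obvious right inverse and the required homotopy is straightforward, so the stratum-by-stratum obstruction-theory construction and the coarse-control estimates you flag as the ``main technical obstacle'' become unnecessary.
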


\begin{proof}
We will show that the
triplet $(\barERX_n,\ERX_n,x)$ satisfies local-\BM{} for all points
$x\in \barERX_n\setminus \ERX_n$.

First, we suppose that $x\in W_i$ for $1\leq i\leq n$. Then we can find a
 neighbourhood $U_{x,i}$ of $x$ in $\barERX_n$ such that

\begin{align*}
 U_{x,i} \cap \left\{\bigcup_{1\leq j\leq n,\, j\neq i}
 \Rip(V[j])_r^R)\times[0,1]
  \cup_{f_j} g_j\EP_{(j)} \right\} = \emptyset.
\end{align*}
 We can construct the right inverse
 $\beta_n^{-1}[i]\colon \beta_n[i](U_{x,i})\to \barERX_n$ of $\beta_n[i]$,
 and a homotopy $h\colon \ERX_n\times [0,1] \to \ERX_n$
 satisfying the conditions in
 Lemma~\ref{lem:hmtpymap-local-BM}. By Proposition~\ref{prop:BM-single},
 the triplet $(\barERX[i],\ERX[i],f(x))$ satisfies local-\BM{}, thus so
 does $(\barERX_n,\ERX_n,x)$.
 
Next, we suppose that $x\in \dGP\setminus \bigcup_{j\leq n}\{s_j\}$.  In
this case, we can also show that the triplet $(\barERX_n,\ERX_n,x)$
satisfies local \BM{}, by the argument similar to the first case with
replacing $\beta_n[j]$ by $\beta_n[1]$.
\end{proof}

\begin{lemma}
\label{lem:tau_n-close} Let $x\in \barERX_n \setminus \ERX_n$ be a point
 and let $U_0$ be a neighbourhood of $x$ in $\barERX_n$. Then there
 exists a neighbourhood $U_1$ of $x$ in $\barERX_n$ such that
\begin{align}
\label{eq:tau_n-close}
 \tau_n(U_1\cap \ERX_n) \subset \bar{\theta}_n^{-1}(U_0) \cap \EG.
\end{align}
\end{lemma}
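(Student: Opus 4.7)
The plan is to reduce (\ref{eq:tau_n-close}) to a statement about the composition $\theta_n\circ \tau_n$, and then invoke the general principle that, for a coarse compactification, a bounded-displacement self-map can be absorbed by shrinking a neighbourhood of any corona point. Since $\bar{\theta}_n|_{\EG}=\theta_n$, the preimage $\bar\theta_n^{-1}(U_0)\cap\EG$ equals $\theta_n^{-1}(U_0\cap \ERX_n)$, so (\ref{eq:tau_n-close}) is equivalent to finding $U_1$ with
\[
\theta_n\bigl(\tau_n(U_1\cap \ERX_n)\bigr)\subset U_0.
\]

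Next I would write $\theta_n\circ\tau_n$ out piece by piece using the defining bullets of $\theta_n=\theta_n^\infty$ and of $\tau_n$ from (\ref{eqdef:tau_n}). A direct calculation shows that this composite is the identity on $\Rip(\Vaug)^R$ and on $g_j\EP_{(j)}$ for $j\leq n$; on $\Rip(V[j])_r^R\times [0,1)$ with $j>n$ it sends $(x,t)\mapsto (h_j(x,t),t)$; and on $\Rip(V[j])_r\times\{1\}$ with $j>n$ it sends $(q,1)\mapsto (f_j^{-1}\circ f_j(q),1)$. Choosing each $h_j$ as the $g_j$-conjugate of an equivariant homotopy $h_{P_{(j)}}$ on $\Rip(V(P_{(j)},\{P_{(j)}\},d_{P_{(j)}}))_r$, the track of $h_j$ is $P_{(j)}$-invariant of bounded diameter on the cocompact fundamental domain; since $\famP$ is finite, a single bound $R_0>0$ works uniformly in $j$. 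Hence $\theta_n\circ\tau_n$ is within coarse distance $R_0$ of the identity on $\ERX_n$, i.e.\ its graph is a controlled subset of $\ERX_n\times\ERX_n$.

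Finally, I would use that $\barERX_n$ is a coarse compactification of $\ERX_n$: for every Higson function $f$ on $\ERX_n$, $f\circ\theta_n\tau_n-f$ vanishes at infinity along any controlled set, so $f\circ\theta_n\tau_n$ and $f$ induce the same continuous function on $h\ERX_n$ and hence on $\barERX_n$. Equivalently, if $p_\alpha\to x\in\barERX_n\setminus\ERX_n$ then $\theta_n\tau_n(p_\alpha)\to x$ as well. Consequently, for any neighbourhood $U_0$ of $x$ there exists a smaller neighbourhood $U_1$ of $x$ with $\theta_n\tau_n(U_1\cap \ERX_n)\subset U_0$, which gives (\ref{eq:tau_n-close}).

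The main technical hurdle in this plan is the uniform displacement bound $R_0$: one must ensure that the homotopies $h_j$ for $j>n$ can be chosen so that their tracks share a common bound. This rests on the finiteness of $\famP$ together with the fact that each $h_{P_i}$ can be chosen $P_i$-equivariant with bounded track on a cocompact $P_i$-complex, after which the $g_j$-conjugation preserves the bound.
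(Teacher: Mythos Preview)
Your reduction to showing $\theta_n\circ\tau_n(U_1\cap\ERX_n)\subset U_0$ is correct, and your piecewise computation of $\theta_n\circ\tau_n$ is accurate. However, the central claim that $\theta_n\circ\tau_n$ is within a uniform coarse distance $R_0$ of the identity is false, and this is where the argument breaks down.

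The problem is on the pieces $\Rip(V[j])_r\times\{1\}\subset\ERX_n$ for $j>n$. Here $\Rip(V[j])_r$ is the Rips complex on $g_jP_{(j)}\times\{l\in\N:l\geq r\}$, which contains vertices at \emph{arbitrarily large depth} $l$. The $P_{(j)}$-action on $\Rip(V[j])_r$ is therefore \emph{not} cocompact: a fundamental domain is an infinite ray in the depth direction, so your appeal to ``bounded track on the cocompact fundamental domain'' is not available. Concretely, the homotopy inverse $f_j^{-1}\colon g_j\EP_{(j)}\to\Rip(V[j])_r$ is $P_{(j)}$-equivariant with cocompact domain, hence its image lies at depth bounded by some constant $C$. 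Thus for $q$ at depth $l$, the point $f_j^{-1}\!\circ f_j(q)$ has depth $\leq C$, and the displacement is at least $l-C$, which is unbounded as $l\to\infty$. The same unboundedness infects the homotopy $h_j$, since $h_j(-,1)=f_j^{-1}\!\circ f_j$. For a direct counterexample to your convergence claim, take $x=s_j\in\dGP\setminus\bigcup_{i\leq n}\{s_i\}$ with $j>n$ and $p_l\coloneqq((g_j,l),1)\in\Rip(V[j])_r\times\{1\}$; then $p_l\to s_j$ in $\barERX_n$, but $\theta_n\tau_n(p_l)$ remains in a bounded region of $\ERX_n$ and does not converge to $s_j$.

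The paper's proof circumvents this by exploiting a structural feature of the neighbourhood basis rather than a displacement bound: setting $RV[j]\coloneqq\Rip(V[j])_r^R\times[0,1]\cup\Rip(V[j])_r\times\{1\}$, one chooses $U_1=\RDp(U_0')$ and observes that for $j>n$ either $RV[j]\subset U_1$ or $RV[j]\cap U_1=\emptyset$ (this ``all-or-nothing'' property comes from item~(\ref{item:FinvFUo}) in the description of $\RDp$). Since $\tau_n(RV[j])$ lies in the $j$-th mapping cylinder of $\EG$, which in turn lies in $\bar\theta_n^{-1}(RV[j])$, the inclusion \cref{eq:tau_n-close} follows without any control on tracks. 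Your approach could be salvaged only by proving this all-or-nothing property directly, at which point the bounded-displacement mechanism becomes unnecessary.
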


\begin{proof}
 Let $x\in \barERX_n \setminus \ERX_n$ be a point and let $U_0$ be a
neighbourhood of $x$ in $\barERX_n$. 

First, we consider the case that $x\in W_i$ for $i\leq n$.  There exists
a neighbourhood $U_0'$ of $x$ in $g_iP_{(i)}\cup W_i$ such that
$\RDp(U_0') \subset U_0$ and for all $l\leq n, \, l\neq i$, we have
\begin{align*}
 \RDp(U_0')\cap
   \left(\Rip(V[l])_r^R\times[0,1]\cup_{f_l} g_l\EP_{(l)}\right) 
 = \emptyset.
\end{align*}
Since the restriction of $\tau_n$ to 
$\Rip(\Vaug)^R \cup \Rip(V[i])_r^R\times[0,1]\cup_{f_i} g_i\EP_{(i)}$
is the identity, we have 
\begin{align*}
 \tau_n\left(U_0\cap 
\left\{\Rip(\Vaug)^R\cup \Rip(V[i])_r^R\times[0,1]
 \cup_{f_i} g_i\EP_{(i)}\right\}\right) 
\subset \bar{\theta}_n^{-1}(U_0).
\end{align*}
For $j>n$, set 
\begin{align*}
 RV[j]: = \Rip(V[j])_r^R\times [0,1] \cup \Rip(V[j])_r\times \{1\}.
\end{align*}
We remark that
\begin{align*}
\tau_n(RV[j])\subset \Rip(V[j])_r^R\times[0,1]\cup_{f_j} g_j\EP_{(j)}.
\end{align*}
Set $U_1\coloneqq\RDp(U_0')$. 
By (\ref{item:FinvFUo}) in \cref{sec:rips-compl-augm},
it is easy to show that for $j>n$, if $U_1\cap RV[j] \neq \emptyset$,
then $RV[j]\subset U_1$, and thus
\begin{align*}
 \Rip(V[j])_r^R\times[0,1]\cup_{f_j} g_j\EP_{(j)}
 \subset \bar{\theta}_n^{-1}(U_1) \cap \EG.
\end{align*}
It follows that, if $U_1\cap RV[j] \neq \emptyset$, then
\begin{align*}
 \tau_n(U_1\cap RV[j]) \subset \tau_n(RV[j]) 
 \subset \bar{\theta}_n^{-1}(U_1) \cap \EG.
\end{align*} 
Therefore we have $\tau_n(U_1)\subset \bar{\theta}_n^{-1}(U_1)\cap \EG$.

In the case $x\in \dGP \setminus \bigcup_{j\leq n}\{s_j\}$,
we can prove (\ref{eq:tau_n-close}) using the same argument with replacing 
$\RDp(U_1)$ by $\RDc(U_1)$.
\end{proof}

\begin{lemma}
\label{lem:H_n-close} Let $x\in \barERX_n \setminus \ERX_n$ be a
 point and let $U_0$ be a neighbourhood of $x$ in $\barERX_n$. Then
 there exists a neighbourhood $U_1'$ of $x$ in $\barERX_n$ such that
\begin{align}
\label{eq:H_n-close-U-contain}
 H_n(\theta_n^{-1}(U_1'\cap \ERX_n)\times [0,1]) 
 \subset \theta_n^{-1}(U_0)\cap \EG. 
\end{align}
\end{lemma}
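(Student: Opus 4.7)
The plan is to exploit the fact, guaranteed by item~(\ref{item:H_n-close}) of \cref{lem:H_n-construction}, that $H_n$ is close to the projection $\EG \times [0,1] \to \EG$, together with the coarse compactification property of $\barERX_n$.

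First, I would extract from the closeness condition a controlled set $E$ on $\EG$ such that $\bigl(H_n(p,t), p\bigr) \in E$ for every $p \in \EG$ and every $t \in [0,1]$. Since $\theta_n \colon \EG \to \ERX_n$ is a coarse map by construction of the projective system, the image $(\theta_n \times \theta_n)(E)$ is contained in some controlled set $E'$ on $\ERX_n$.

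Next, since $\barERX_n$ is a coarse compactification, the Higson-type condition applied to $E'$ and to the given neighbourhood $U_0$ of $x$ produces a smaller neighbourhood $U_1'$ of $x$ in $\barERX_n$ with
\[
 E'[U_1' \cap \ERX_n] \subset U_0 \cap \ERX_n,
\]
where $E'[A]$ denotes the set of points of $\ERX_n$ that are $E'$-related to some element of $A$. Then, for any $q \in \theta_n^{-1}(U_1' \cap \ERX_n)$ and any $t \in [0,1]$, one has $\bigl(\theta_n(H_n(q,t)), \theta_n(q)\bigr) \in E'$ and $\theta_n(q) \in U_1' \cap \ERX_n$, so $\theta_n(H_n(q,t)) \in E'[U_1' \cap \ERX_n] \subset U_0$; hence $H_n(q,t) \in \theta_n^{-1}(U_0) \cap \EG$, which is exactly the containment~\cref{eq:H_n-close-U-contain}.

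The main obstacle I anticipate is verifying the coarse compactification property of $\barERX_n$ carefully at both kinds of corona points---those in $\dGP \setminus \bigcup_{j \leq n}\{s_j\}$ and those in $W_j$ for some $j \leq n$---using the explicit neighbourhood bases from \cref{sec:rips-compl-augm} (encoded through the sets $\RDc(-)$ and $\RDp(-)$); this is the same sort of bookkeeping as in the proof of \cref{lem:tau_n-close}, on which I would model the verification. Once that step is handled, the rest of the argument is formal manipulation of controlled sets.
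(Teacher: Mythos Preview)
Your approach is correct and is essentially the same as the paper's: the paper's proof is a one-line appeal to condition~(\ref{item:H_n-close}) of \cref{lem:H_n-construction}, and your argument simply unpacks that appeal via controlled sets and the coarse-compactification property of $\barERX_n$. The extra care you propose (checking that $\theta_n$ is bornologous and verifying the Higson-type shrinking property at both kinds of corona points) is exactly the content the paper leaves implicit.
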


\begin{proof}
 Let $x\in \barERX_n \setminus \ERX_n$ be a point and let $U_0$ be a
 neighbourhood of $x$ in $\barERX_n$. By the
 condition~(\ref{item:H_n-close}) in Lemma~\ref{lem:H_n-construction},
 we can find a smaller neighbourhood $U_1'$ of $x$ in $\barERX_n$
 satisfying the equation (\ref{eq:H_n-close-U-contain}).
\end{proof}

\begin{proposition}
\label{prop:EG-BM}
 If $(\overline{\EP_i},\EP_i)$ satisfies the condition \BM{} for all
 $1\leq i \leq k$, then so does $(\barEG,\EG)$.
\end{proposition}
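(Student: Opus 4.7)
The plan is to deduce local-\BM{} at each boundary point $x\in \barEG\setminus\EG$ from the already-established \BM{} condition on the finite-stage compactifications $\barERX_n$, by exploiting the projective limit description $\barEG=\varprojlim\barERX_n$ together with the maps $\theta_n$, $\tau_n$ and the homotopy $H_n$ of \cref{sec:projective-limit}. Let $U$ be an arbitrary neighbourhood of $x$ in $\barEG$. First I choose $n$ large enough that $\bar{\theta}_n(x)$ avoids the collapsed parabolic points: if $x\in W_i$ take $n\geq i$, and if $x\in \dGP\setminus\bigcup_j\{s_j\}$ any $n$ will do. After shrinking, I may assume $U=\bar{\theta}_n^{-1}(U_0)$ for some neighbourhood $U_0$ of $\bar{\theta}_n(x)$ in $\barERX_n$, since such sets form a neighbourhood basis of $x$ in the projective limit.

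Next I apply Lemmas~\ref{lem:tau_n-close} and \ref{lem:H_n-close} to $U_0$ and intersect the two resulting neighbourhoods with each other and with $U_0$, producing a single neighbourhood $U_2\subset U_0$ of $\bar{\theta}_n(x)$ satisfying
\[
  \tau_n(U_2\cap \ERX_n)\subset \bar{\theta}_n^{-1}(U_0)\cap \EG
  \quad\text{and}\quad
  H_n\bigl(\theta_n^{-1}(U_2\cap \ERX_n)\times [0,1]\bigr)\subset \bar{\theta}_n^{-1}(U_0)\cap \EG.
\]
The preceding lemma shows $(\barERX_n,\ERX_n)$ satisfies \BM{}, so local-\BM{} at $\bar{\theta}_n(x)$ furnishes a neighbourhood $V_0\subset U_2$ of $\bar{\theta}_n(x)$ that is weakly contractible in $U_2$. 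I set $V\coloneqq\bar{\theta}_n^{-1}(V_0)$, a neighbourhood of $x$ in $\barEG$.

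To verify that $V$ is weakly contractible in $U$, let $K\subset V\cap\EG$ be a finite subcomplex, and I build a null-homotopy $K\times[0,2]\to U\cap\EG$ in two phases. Phase~1, on $K\times[0,1]$, is the restriction of $H_n$: it starts at the inclusion of $K$ and ends at $\tau_n\circ\theta_n|_K$. Since $K\subset\theta_n^{-1}(V_0\cap\ERX_n)\subset\theta_n^{-1}(U_2\cap\ERX_n)$, the second displayed containment confines this homotopy to $U\cap\EG$. For phase~2, observe that $\theta_n(K)$ is a compact subset of $V_0\cap\ERX_n$; using the explicit neighbourhood basis of $\barERX_n$ described in \cref{sec:rips-compl-augm}, I enclose $\theta_n(K)$ in a finite subcomplex $K'$ of $\ERX_n$ with $K'\subset V_0\cap\ERX_n$, subdividing if necessary. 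Weak contractibility of $V_0$ in $U_2$ provides a null-homotopy $F\colon K'\times[1,2]\to U_2\cap\ERX_n$, and I define phase~2 by $(k,t)\mapsto \tau_n\bigl(F(\theta_n(k),t)\bigr)$. This matches $\tau_n\circ\theta_n|_K$ at $t=1$, is constant at $t=2$, and lies in $\tau_n(U_2\cap\ERX_n)\subset U\cap\EG$ by the first displayed containment. Concatenating the two phases yields the desired null-homotopy of $K$ in $U\cap\EG$.

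The principal technical obstacle is the enclosing step in phase~2: one must check that the compact set $\theta_n(K)\subset V_0\cap\ERX_n$ can be fitted inside a finite subcomplex of $\ERX_n$ still contained in $V_0\cap\ERX_n$. This hinges on the specific form of the neighbourhood basis of $\barERX_n$ from \cref{sec:rips-compl-augm}, according to which each $V_0\cap\ERX_n$ decomposes as a union of full subcomplexes of $\Rip(\Vaug)^R$ together with mapping-cylinder pieces $\Rip(V[j])_r^R\times[0,1]\cup_{f_j}g_j\EP_{(j)}$ for $j\leq n$ and product pieces $\Rip(V[j])_r^R\times[0,1]\cup \Rip(V[j])_r\times\{1\}$ for $j>n$, each of which supports a natural finite-subcomplex exhaustion. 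Once this structural fact is settled, the rest of the argument is formal bookkeeping with the compatibilities already provided by Lemmas~\ref{lem:H_n-construction}, \ref{lem:tau_n-close}, and \ref{lem:H_n-close}.
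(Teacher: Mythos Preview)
Your argument is correct and is essentially the paper's proof unrolled: the paper invokes \cref{lem:hmtpymap-local-BM} (the citation of \cref{lem:map-local-BM} in the text is evidently a slip, since $\bar{\theta}_n$ is not a local homeomorphism) with $f=\bar{\theta}_n$, $g=\tau_n$, and $h=H_n$, whereas you carry out that lemma's two-phase homotopy by hand. The enclosing step you isolate as the principal obstacle is exactly the point the proof of \cref{lem:hmtpymap-local-BM} also skates over, and your resolution via the subcomplex structure of the basic neighbourhoods $\RDc(U)$ and $\RDp(U)$ is the intended one.
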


\begin{proof}
Let $x\in \barEG\setminus \EG$ be a point. We will show that the
triplet $(\barEG,\EG,x)$ satisfies local-\BM{}.

Let $U$ be a neighbourhood of $x$ in $\barEG$. 
Since $\barEG$ is the projective limit of $\barERX_n$, 
there exist $n \in \N$ and 
a neighbourhood $U_0$ of $x$ in $\barERX_n$ such that 
$\bar{\theta}_n^{-1}(U_0)\subset U$.

By Lemma~\ref{lem:tau_n-close} and Lemma~\ref{lem:H_n-close},
there exists a neighbourhood $U_1\subset U_0$ of $x$ in $\barERX_n$ such that 
$\tau_n(U_1)$ is a neighbourhood of $x=\tau_n(x)$ in $\barEG$, and
\begin{align*}
 &\tau_n(U_1\cap \ERX_n) \subset U\cap \EG,\\
 &H_n(\theta_n^{-1}(U_1\cap \ERX_n)\times [0,1]) \subset U\cap \EG,
\end{align*} 
where $H_n$ is the homotopy constructed in \cref{lem:H_n-construction}.
Since the pair $(\barERX_n,\ERX_n)$ satisfies the condition \BM{}, 
by \cref{lem:map-local-BM}, the triplet $(\barEG,\EG,x)$ satisfies local-\BM{}.

%
%
%
\end{proof}

\subsection{Proof of Theorem~\ref{thmIntro:main-BM}}
\begin{lemma}
\label{lem:blown-up-finite-dim} Let $G$ be a finitely generated group
 which is hyperbolic relative to a finite family of infinite subgroups
 $\famP=\{P_1,\dots, P_k\}$.  We suppose that each subgroup $P_i$ admits
 a finite $P_i$-simplicial complex $\EP_i$ which is a universal space
 for proper actions. For $i=1,\dots,k$, let $\overline{\EP_i}$ be a
 coarse compactification of $\EP_i$.  Set $W_i\coloneqq\overline{\EP_i}
 \setminus \EP_i$. Let $\dG$ be the blown-up corona of $(G,\famP,
 \{W_1,\dots, W_k\})$ and let $\barEG\coloneqq\EG \cup \dG$ be a coarse
 compactification.  Suppose that for all $i = 1,\dots,k$,
 $\overline{\EP_i}$ has finite covering dimension, then so does $\barEG$.
\end{lemma}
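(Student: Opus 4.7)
The strategy is to exploit the projective-limit decomposition $\barEG = \varprojlim_n \barERX_n$ from \cref{sec:projective-limit} and establish a uniform bound $\dim \barERX_n \leq N$. Since the covering dimension of a compact Hausdorff inverse limit is dominated by the supremum of the dimensions of its terms, this yields $\dim \barEG \leq N$.

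I first bound $\dim \ERX_n$ uniformly in $n$. The space $\ERX_n$ is a locally finite union of CW-pieces: the truncated Rips complex $\Rip(\Vaug)^R$; and for each $j\in\N$, the cylinder $\Rip(V[j])_r^R\times[0,1]$ glued via $f_j$ either to $g_j\EP_{(j)}$ (for $j\leq n$) or to $\Rip(V[j])_r\times\{1\}$ (for $j>n$). Each piece has finite dimension bounded by a constant depending only on $R$, $D$, the generating set $\mathcal{S}$, and $M \coloneqq \max_{1\leq i\leq k}\dim\overline{\EP_i}$, independently of $j$ and $n$ (since $\famP$ has only $k$ elements). Local finiteness then yields $\dim \ERX_n \leq N_0$ for some constant $N_0$. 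Next, for the corona $\mathcal{C}_n \coloneqq (\dGP\setminus\bigcup_{j\leq n}\{s_j\}) \cup \bigcup_{j\leq n}W_j$ of $\barERX_n$, the natural continuous surjection $\varphi_n\colon\mathcal{C}_n\to\dGP$ collapsing each $W_j$ to $s_j$ and fixing the rest has fibres that are either single points or some $W_j$ of dimension $\leq M$; Hurewicz's dimension-raising inequality for closed maps of compact metric spaces then gives $\dim\mathcal{C}_n\leq\dim\dGP+M$. Combining via the countable sum theorem for metrizable spaces (applied to an exhaustion of $\ERX_n$ by compact finite subcomplexes together with the closed set $\mathcal{C}_n$) yields $\dim\barERX_n\leq\max(N_0,\dim\dGP+M)$, uniform in $n$.

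The main obstacle is establishing $\dim\dGP<\infty$. To this end I would adapt Bestvina-Mess's analysis of Gromov compactifications to the proper hyperbolic graph $\Xaug$: although $G$ fails to act cocompactly on $\Xaug$ due to the infinite horoballs, the $G$-cofiniteness of the depth-bounded truncations of $\Xaug$, together with the geodesic control in horoballs provided by \cref{lem:HoroGeodeGM} and \cref{lem:parabolic-pt-of-horoball}, should produce a finite-dimensional coarse compactification dominating $\dGP$. Once this bound is secured, the uniform estimate $\dim\barERX_n\leq N$ passes through the inverse-limit theorem to give $\dim\barEG\leq N$, completing the proof.
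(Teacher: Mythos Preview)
Your approach has a genuine error: the spaces $\ERX_n$ are \emph{not} finite-dimensional, so the uniform bound $\dim\ERX_n\leq N_0$ fails. For each $j>n$ the piece $\Rip(V[j])_r\times\{1\}\subset\ERX_n$ is the Rips complex on the combinatorial horoball at all depths $\geq r$. At depth $l$ a single horizontal edge joins $(p,l)$ to every $(q,l)$ with $d(p,q)\leq 2^l$, so the $D$-ball around $(p,l)$ contains unboundedly many vertices as $l\to\infty$; hence $\Rip(V[j])_r$ has simplices of arbitrarily high dimension. The paper records exactly this phenomenon just after \cref{prop:BM-single} (``$\ERX[i]$ has an infinite dimension''), and the same reasoning applies to $\ERX_n$. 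Consequently $\dim\barERX_n=\infty$ for every finite $n$, and the inverse-limit inequality $\dim\varprojlim\barERX_n\leq\sup_n\dim\barERX_n$ gives no information.

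A second gap is that you do not establish $\dim\dGP<\infty$. The sketched adaptation of Bestvina--Mess is not obviously workable: their argument uses a cocompact action, the horoballs destroy cocompactness, and the depth-truncation idea is not carried out.

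The paper sidesteps both problems by working with the corona $\dG$ directly rather than through the $\barERX_n$. It splits $\dG$ into the conical part $\dGP\setminus\{s_i:i\in\N\}$, which is homeomorphic to the Gromov boundary $\partial\hat\Gamma(G)$ of the coned-off Cayley graph and is finite-dimensional by a lemma of Dahmani, and the parabolic part $\bigcup_{i\in\N}W_i$, a countable union of compacta of uniformly bounded dimension (hence finite-dimensional by the countable sum theorem). The addition inequality $\dim(A\cup B)\leq\dim A+\dim B+1$ then bounds $\dim\dG$; since $\EG$ is a $G$-finite simplicial complex it is already finite-dimensional, and the conclusion follows. If you wish to rescue the inverse-limit strategy, note that the coronae also form an inverse system with $\dG=\varprojlim_n\mathcal C_n$, so one could try to bound $\dim\mathcal C_n$ directly via your Hurewicz argument---but this still requires the external input $\dim\dGP<\infty$, i.e.\ Dahmani's result or an equivalent.
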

\begin{proof}
 Since $\EG$ has finite dimension, it is enough to show that 
the blown-up corona
\[
 \dG= \left(\dGP \setminus \bigcup_{i\in
\N}\{s_i\}\right) \cup \bigcup_{i\in \N}W_i
\] has a finite dimension. Here we remark that $W_i = g_iW_{(i)}$ for $i\in \N$.

 We denote by $\hat{\Gamma}(G)$ the {\itshape coned-off} Cayley graph,
 which was introduced by Farb\cite{Far98}. We remark that
 $\hat{\Gamma}(G)$ is hyperbolic geodesic space but is not proper
 metric space. It is known that its Gromov boundary $\partial
 \hat{\Gamma}(G)$ is homeomorphic to $\dGP \setminus
 \bigcup_{i\in\N}\{s_i\}$. See \cite[Proposition 9.1.]{MR2922380}.

 In the proof of~\cite[Lemma 3.7.]{MR1974394}, it is shown that
 $\partial \hat{\Gamma}(G)$ has finite dimension. Since
 $\bigcup_{i\in \N}W_i$ is a countable union of bounded dimensional
 compact sets, by~\cite[Theorem III 2.]{MR0006493}, it has finite
 dimension. Then it follows from \cite[III-2-B]{MR0006493} that
 \[
  \dim \dG \leq \dim (\partial \hat{\Gamma}(G)) + \dim(\cup_i W_i) + 1.
 \]
 Therefore $\dG$ has a finite dimension.
\end{proof}

\begin{proof}[Proof of Theorem~\ref{thmIntro:main-BM}]
A universal space $\EG$ is constructed in Section~\ref{sec:projective-limit}.
 By Proposition~\ref{prop:EG-BM} and
 Lemma~\ref{lem:blown-up-finite-dim}, the pair $(\barEG,\EG)$ satisfies
 the condition \fBM{}. Thus by Proposition~\ref{prop:Z-set},
 $\dG$ is a $\mathcal{Z}$-set in $\barEG$. So $\barEG$ is homotopic to $\EG$.
Since $\EG$ is contractible, so is $\barEG$.
This proves the statement~(\ref{item:BM}). 
The statement~(\ref{item:equiv}) follows
 from Proposition~\ref{prop:equivariant-corona}.
\end{proof}

\section{cohomological dimension}
\label{sec:cohom-dimens}


\subsection{Proof of Corollary~\ref{thm:main-cohomdim}}
Let $G$ be a finitely generated
 group that is hyperbolic relative to a finite family of infinite
 subgroups $\famP=\{P_1,\dots, P_k\}$.  We suppose that each subgroup
 $P_i$ admits a finite $P_i$-simplicial complex $\EP_i$ which is a
 universal space for proper actions.  For $i=1,\dots,k$, let
 $\overline{\EP_i}$ be a coarse compactification of $\EP_i$.  Set
 $W_i\coloneqq\overline{\EP_i} \setminus \EP_i$.  Let $\dG$ be the
 blown-up corona of $(G,\famP, \{W_1,\dots, W_k\})$. 

Let $HX^* (G;\Z)$ denotes the coarse cohomology of $G$. By \cite[Example 5.21]{MR2007488}
we have $H^*(G;\Z G) \cong HX^* (G;\Z)$. 
Then by \cite[Theorem 1.2.]{boundary},
 we have
\begin{align*}
 H^p(G;\Z G) \cong HX^p(G;\Z) \cong \widetilde{H}^{p -1}(\dG;\Z) \quad \text{for all} \quad p\in \N.
\end{align*}

Now we suppose that
 $(\overline{\EP_i},W_i)$ is a \calZ-structure of $P_i$ for all 
 $1\leq i \leq k$.
Then by Theorem~\ref{thmIntro:main-BM}  and
Theorem~\ref{thm:BM-implies-cohomdim}, 
we have

\begin{align*}
  \dim(\dG) =& \max\{p: H^{p}(\dG;\Z)\neq 0\}\\
            =& \max\{p: H^{p}(G;\Z G)\neq 0\} -1.
\end{align*}

Let $\EG$ be a finite $G$-simplicial complex which is a universal space
for proper actions. 
Now we assume that $G$ is torsion free.
Then, the action of $G$ on $\EG$ is free, so 
the quotient $\EG/G$ is a finite simplicial complex, which is a $K(G,1)$-space. 
Thus $G$ is of type FL by \cite[VIII (6.3)]{Brown-cohom-gr}.
Therefore by \cite[VIII (6.7)]{Brown-cohom-gr}, we have

\begin{align}
\label{eq:cd-blbdry}
  \cd{}{G} = \max\{p: H^{p}(G;\Z G) \neq 0\} = \dim(\dG) + 1.
\end{align}
Alternatively, we can also prove \cref{eq:cd-blbdry} by applying 
\cite[Theorem 1.7]{Bestv-local-homo-bdry}

\subsection{Examples}
\label{sec:calc-examples}
\label{sec:example-cohomdim} We give two examples of the explicit
calculation of the dimension of the blown-up coronae. In the first case,
the dimension of the blown-up corona is greater than that of the
Bowditch boundary. In the second case, the opposite inequality holds.
\begin{example}
 Let $A, B$ be finitely generated groups. A free product $A*B$ is
 hyperbolic relative to $\{A, B\}$. The Bowditch boundary
 $\partial(A*B,\{A,B\})$ is homeomorphic to the Cantor space. Therefore
 $\dim \partial(A*B,\{A,B\}) = 0$. Let $n\geq 2$ be an integer.  Set
 $A=B\coloneqq\Z^n$.  Then $\underline{E}\Z^n = \R^n$ and a closed disk $D^n \coloneqq
 \{x\in \R^n: \abs{x} \leq 1\}$ is a coarse compactification of $\R^n$
 with a corona which is homeomorphic to the sphere $S^{n-1}$. 
Let $\partial^{\mathrm{bl}}(\Z^n * \Z^n)$ be the blown-up
 corona of $(\Z^n * \Z^n,\{\Z^n,\Z^n\},\{S^{n-1},S^{n-1}\})$. Then we
 have
\begin{align*}
 \tilde{H}^{p}(\partial^{\mathrm{bl}}(\Z^n * \Z^n)) 
 =\begin{cases}
   \bigoplus_{i\in \N}\Z, & \text{ if } p = n-1\text{ or } p=0,\\
   0, & \text{otherwise}.
  \end{cases}
\end{align*}
 Therefore
 $\dim \partial(A*B,\{A,B\}) =0 < \dim \partial^{\mathrm{bl}}(\Z^n * \Z^n) = n-1$.
 For details of the above computation, see \cite{fukaya2024boundary}.
\end{example}
\begin{proposition}
\label{prop:hypmanifd-cohomdim} Let $G$ be a group which properly
 isometrically acts on an $m$-dimensional simply-connected pinched
 negatively curved complete Riemannian manifold $X$.  Suppose that the
 quotient is with finite volume, but not compact.

 Let $\famP$ be a set of representatives
 of conjugacy invariant classes of maximal parabolic subgroups of $G$
 with respect to the action on $X$. 
 Let $\dG$ be a blown-up corona of $(G,\famP)$ described in the proof of
 \cite[Corollary 9.3.]{boundary}. 
 Then we have
 \begin{align*}
   \dim \dGP = m-1 > \dim \dG = m-2.
 \end{align*}
\end{proposition}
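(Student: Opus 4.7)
The plan is to establish the two equalities separately. First, for $\dim\dGP = m-1$: since $G$ acts isometrically on the $m$-dimensional Hadamard manifold $X$ with finite covolume, $G$ is geometrically finite with limit set equal to the full visual boundary $\partial_\infty X$. Under the standard identification of the Bowditch boundary with the limit set, one obtains $\dGP \cong \partial_\infty X \cong S^{m-1}$, yielding $\dim\dGP = m-1$.

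Next, I would compute $\dim\dG$ via the cohomological dimension formula~\cref{eq:cd-blbdry} of \cref{thm:main-cohomdim}. In the blown-up corona constructed in~\cite[Corollary 9.3.]{boundary}, each $P_i$ is virtually nilpotent of Hirsch length $m-1$ by the Margulis lemma (it acts cocompactly on a horosphere diffeomorphic to $\R^{m-1}$), and each compactification is chosen so that $(\overline{\EP_i},W_i)$ is a $\calZ$-structure of $P_i$, so the hypotheses of \cref{thm:main-cohomdim} are met. To handle possible torsion, pass to a torsion-free finite-index subgroup $G'\le G$ via Selberg's lemma; the blown-up corona of $G'$ is homeomorphic to $\dG$, so it suffices to show $\cd{}{G'}=m-1$. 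The quotient $M\coloneqq X/G'$ is a non-compact complete Riemannian $m$-manifold of finite volume; truncating the finitely many cusps yields a compact manifold with boundary onto which $M$ deformation retracts, and this in turn retracts onto an $(m-1)$-dimensional spine, so $\cd{}{G'}\le m-1$. Conversely, a torsion-free finite-index subgroup of any $P_i$ is torsion-free nilpotent of Hirsch length $m-1$, hence has cohomological dimension $m-1$, forcing $\cd{}{G'}\ge m-1$. Combining, $\cd{}{G'}=m-1$, and~\cref{eq:cd-blbdry} yields $\dim\dG=m-2$.

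The main obstacle I anticipate is verifying that the dimension of the blown-up corona is genuinely invariant under passage to a torsion-free finite-index subgroup $G'\le G$, and that the $\calZ$-structure hypothesis holds for virtually nilpotent (rather than just virtually abelian) parabolics in the general pinched negative curvature setting; both of these points are essentially addressed in the construction of~\cite[Corollary 9.3.]{boundary}, but they constitute the delicate technical inputs that a full write-up would need to verify or cite explicitly.
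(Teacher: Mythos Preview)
Your identification $\dGP \cong \partial_\infty X \cong S^{m-1}$ matches the paper's argument exactly. The divergence is in computing $\dim\dG$. The paper does not pass to a torsion-free subgroup or compute cohomological dimension geometrically; instead it quotes directly from \cite[Corollary 9.3.]{boundary} the computation
\[
H^p(G;\Z G) \cong HX^p(G) \cong \tilde{H}^{p-1}(\dG) \cong
\begin{cases}
\bigoplus_{i\in\N}\Z & (p = m-1),\\
0 & (p \neq m-1),
\end{cases}
\]
and then applies formula~\cref{eqIntro:cohomdim} of \cref{thm:main-cohomdim}, namely $\dim\dG = \max\{p: H^p(G;\Z G)\neq 0\} - 1$, which holds without any torsion-freeness assumption on $G$. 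This immediately gives $\dim\dG = m-2$.

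Your route via~\cref{eq:cd-blbdry} and Selberg's lemma has a gap that goes beyond the technical concerns you flag at the end. Selberg's lemma requires $G$ to be linear, but for a general pinched negatively curved manifold $X$ (as opposed to a rank-one symmetric space) there is no reason to expect $G \le \mathrm{Isom}(X)$ to embed in some $\mathrm{GL}_n$, and linearity of such lattices is not known in general. So the passage to a torsion-free finite-index subgroup may simply be unavailable in the stated generality, and with it the geometric spine argument. The paper's approach sidesteps this entirely, since the coarse-cohomology computation in \cite{boundary} and formula~\cref{eqIntro:cohomdim} apply regardless of torsion; the appearance of $\cd{}{G}$ in the paper's final line is incidental rather than load-bearing for the conclusion $\dim\dG = m-2$.
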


\begin{proof}
 It is known that $\mathbb P$ is a finite family of virtually
nilpotent groups, and $G$ is hyperbolic relative to $\mathbb P$
(\cite[8.6]{asym_invs} and also \cite[Theorem 5.1]{Far98}). The Bowditch
boundary $\dGP$ is the visual boundary of $X$, which is homeomorphic to
the $m-1$ dimensional sphere $S^{m-1}$.
Let $\dG$ be a blown-up corona of $(G,\famP)$ described in the proof of
\cite[Corollary 9.3.]{boundary}. It is shown that 
\begin{align*}
 H^p (G;\Z G)\cong  
 HX^p (G)\cong \tilde{H}^{p -1}(\dG)\cong \left\{
\begin{array}{cc}
\bigoplus_{i\in\mathbb N} \mathbb Z   & (p =m-1)   \\
0  & (p \neq m-1)   \\
\end{array}
\right. 
.
\end{align*}

By Corollary~\ref{thm:main-cohomdim}, we have 
$\cd{}{G} = \dim \dG +1 = m-1$.
\end{proof}

\begin{remark}
 Takatsu \cite[Section 5.3]{takatsu2023blown} 
 obtained further examples of explicit computations of the dimensions of blown-up coronae
 by using the automorphisms groups of K3 surfaces related to sphere packings. 
\end{remark}

\section{Blown-up corona of a hyperbolic group with an almost malnormal family of  quasiconvex subgroups}
\label{sec:hyp-malnormal} Let $G$ be a non-elementary hyperbolic group,
and let $\famP=\{P_1,\dots, P_k\}$ be an almost malnormal family of
proper, quasiconvex subgroups of $G$. By Theorem~\ref{th:hyp-is-relhyp},
$G$ is hyperbolic relative to $\famP$.


Manning~\cite{Manning-Bowditch-boundary} gave an explicit description of
its Bowditch boundary $\dGP$. In this section, using his description, we
show that the blown-up corona of $(G,\famP)$ is homeomorphic to the Gromov
boundary. First, we recall the description of $\dGP$ following~\cite{Manning-Bowditch-boundary}.


Let $\partial G$ be the Gromov boundary of
$G$, and for each $i\in \N$, let $\partial P_i$ be the limit set of
$P_i$.  We see that $g\partial P_i\cap h\partial P_j$ is empty unless
$i=j$ and $g^{-1}h\in P_i$.  We choose an order of coset
$\{g_n\}_{n\in\N}$ of $(G,\famP)$ in the sense of
Definition~\ref{def:order-of-cosets}.  Set
\begin{align}
 A&\coloneqq\{g_i\partial P_{(i)}: i\in \N\}; \label{eq:5}\\
 B&\coloneqq\{\{x\}:x\in \partial G\setminus \bigcup A\}.\label{eq:6}
\end{align}
The union $\mathcal{C}\coloneqq A\cup B$ is therefore a decomposition of
$\partial G$ into closed sets. By Theorem~\ref{th:G-to-B-full}, the quotient
topological space $\partial G/\mathcal{C} = A\cup B$ is homeomorphic to
the Bowditch boundary $\dGP$.

We need the following elementary fact on continuous maps from compact spaces
to Hausdorff spaces.
\begin{lemma}
\label{lem:hX-Y} 
 Let $X$, $Y$, and $Z$ be topological spaces, and 
 let $\xi \colon Z\to X$ be a continuous surjection.
 We suppose that $Z$ is compact and $X$ is Hausdorff.
 Then a map $f\colon X\to Y$ is continuous if and only if
 the composite $f\circ \xi\colon Z\to Y$ is continuous.
\end{lemma}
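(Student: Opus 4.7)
The plan is to prove the two implications separately, with the forward direction being immediate from the composition rule, and the reverse direction exploiting the fact that a continuous surjection from a compact space to a Hausdorff space is automatically closed, hence a quotient map.

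For the ``only if'' direction, if $f$ is continuous, then $f\circ\xi$ is a composition of continuous maps, hence continuous. This is a one-liner.

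For the ``if'' direction, assume $f\circ\xi$ is continuous. I plan to check continuity of $f$ by verifying that the preimage of every closed subset of $Y$ is closed in $X$. Let $V\subset Y$ be closed. Then $(f\circ\xi)^{-1}(V)=\xi^{-1}(f^{-1}(V))$ is closed in $Z$ by continuity of $f\circ\xi$. Since $Z$ is compact, this subset is compact. Its image under $\xi$ is compact in $X$ as a continuous image of a compact set, and by surjectivity of $\xi$ we have
\[
\xi\bigl(\xi^{-1}(f^{-1}(V))\bigr)=f^{-1}(V).
\]
Since $X$ is Hausdorff, every compact subset is closed, so $f^{-1}(V)$ is closed in $X$. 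Hence $f$ is continuous.

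There is essentially no obstacle here; the only subtlety worth flagging is the use of surjectivity of $\xi$ in the identity $\xi(\xi^{-1}(A))=A$, together with the standard topological facts that compact subsets of Hausdorff spaces are closed and continuous images of compact sets are compact. Equivalently, one may phrase the argument by noting that $\xi$ is a closed map (continuous surjection from compact to Hausdorff) and therefore a quotient map, so $f$ is continuous iff $f\circ\xi$ is, by the universal property of quotients.
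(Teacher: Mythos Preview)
Your proof is correct and follows essentially the same route as the paper: the paper first remarks that $\xi$ is a closed map and then concludes $f^{-1}(A)=\xi\bigl((f\circ\xi)^{-1}(A)\bigr)$ is closed, which is exactly your argument (you just unpack the closedness of $\xi$ via compactness and Hausdorffness rather than citing it). Your final remark about the quotient-map reformulation is also accurate and matches the spirit of the paper's proof.
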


\begin{proof}
 Under the hypothesis of the lemma, it is well known that $\xi$ is a closed map.
 Suppose that the composite $f\circ \xi\colon Z\to Y$ is continuous.
 Let $A\subset Y$ be a closed set. 
 So $(f\circ \xi)^{-1}(A)$ is closed in $Z$. 
 Then the image $\xi ((f\circ \xi)^{-1}(A))$ is closed in $X$.
 Since $\xi$ is surjective, 
 \begin{align*}
  f^{-1}(A) = \xi (\xi^{-1}(f^{-1}(A))) = \xi ((f\circ \xi)^{-1}(A)).
 \end{align*}
 Thus $f^{-1}(A)$ is closed.  Therefore $f$ is continuous.
\end{proof}



\begin{proof}[Proof of Theorem~\ref{th:B-to-G}]
 We use the notation defined by (\ref{eq:5}) and (\ref{eq:6}).  By the
 construction of $\dG$ in \cite[Definition 7.14]{boundary}, we have a
 decomposition
 \begin{align*}
  \dG = \left(\dGP\setminus \cup_i \{s_i\}\right)
  \cup \left(\cup_i g_i\partial P_{(i)}\right).
 \end{align*}
 By Theorem~\ref{th:G-to-B-full}, we can identify
 $\dGP\setminus \cup_i \{s_i\}$ with the subset $B$.
 Thus, there exists a canonical
 $G$-equivariant bijection $\varphi\colon G\cup \dG \to G\cup \partial G$.
 Since both of $\dG$ and $\partial G$ are coarse compactifications,
 there exist continuous surjections $\xi_\infty\colon hG \to G\cup \dG$
 and $\eta\colon hG \to G\cup \partial G$, both are the extensions of the
 identity $G\to G$, where $hG$ is the Higson corona of $G$.  We
 remark that an explicit description of $\xi_\infty$ is given in
 \cite[Section 7.2.]{boundary}. It follows from this description of
 $\xi_\infty$, we see that 
 $\eta = \varphi\circ \xi_\infty$
Thus
 $\varphi$ is continuous by Lemma~\ref{lem:hX-Y}, therefore it is 
a homeomorphism.
\end{proof}

\section{Action on the blown-up corona}
\label{sec:act-on-corona}
In this section, we show the following proposition, which corresponds to
the statement~(\ref{item:equiv}) in Theorem~\ref{thmIntro:main-BM}.

\begin{proposition}
\label{prop:equivariant-corona}
 Let $G$ be a group that is hyperbolic relative to a finite family of
infinite subgroups $\famP = \{P_1,\dots, P_k\}$. Suppose that for all 
$i\in \{1,\dots,k\}$, each $P_i$ has $P_i$-equivariant corona 
 $\partial P_i$. Then the blown-up corona of 
$(G,\famP,\{\partial P_1,\dots\partial P_k\})$ is an $G$-equivariant corona.
\end{proposition}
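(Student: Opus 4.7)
The plan is to define the $G$-action on $G \cup \dG$ explicitly and then establish continuity using the neighbourhood bases described in~\cite{boundary}. For $g \in G$, I would define the action on the Bowditch part $\dGP \setminus \bigcup_i \{s_i\}$ as the restriction of the canonical $G$-action on the Bowditch boundary $\dGP$; this is well-defined since $G$ permutes the parabolic points $\{s_i\}$. For the blown-up part, given $g \in G$ and $i \in \N$, there exist unique $j \in \N$ and $p \in P_{(j)}$ with $g g_i = g_j p$, and I would set $g \cdot (g_i x) \coloneqq g_j (p \cdot x)$ for $x \in \partial P_{(i)} = \partial P_{(j)}$, using the hypothesis that $\partial P_{(j)}$ is $P_{(j)}$-equivariant. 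A routine check shows that this is independent of choices and assembles into a group action by bijections on $G \cup \dG$ extending the left-multiplication action on $G$.

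The main content is continuity. At a point $x \in \dGP \setminus \bigcup_i \{s_i\}$, neighbourhoods in $\dG$ are inherited from those in $\dGP$, so continuity there reduces to the continuity of the $G$-action on $\dGP$, which is standard. At a point $y \in W_i$, the neighbourhood basis is built from a neighbourhood $U$ of $y$ in the coarse compactification of $g_i P_{(i)}$ together with a ``coarse cone'' extending into the horoball structure. I would verify that left multiplication by $g$ sends such a neighbourhood around $y$ to the analogous one around $g \cdot y \in W_j$, using that the $G$-action on the augmented space $\Xaug$ is by isometries that permute the horoballs $\Horo(g_i P_{(i)})$ according to the induced permutation on cosets.

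The main obstacle is the bookkeeping required to check that each ingredient of the neighbourhood basis at $y \in W_i$ (the preimage under the retraction $F_i$, the sub-horoball, and the map $\bar{L}_i$) transforms covariantly under left translation. A conceptually cleaner alternative I would also consider is to work at the level of the Higson compactification: the $G$-action on itself by left multiplication is isometric with respect to $d_G$, so it extends canonically and continuously to $hG$. Since $G \cup \dG$ is a quotient of $hG$ via the surjection $\xi_\infty$ of~\cite{boundary}, it suffices to show that the equivalence relation whose quotient realizes $G \cup \dG$ is $G$-invariant; this follows by combining the $G$-equivariance of $\dGP$ with the $P_i$-equivariance of each $\partial P_i$, and yields the desired continuous $G$-action on $\dG$ at once.
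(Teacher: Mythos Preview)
Your proposal is correct, and the ``cleaner alternative'' you sketch at the end is precisely the route the paper takes: it defines the action $\alpha$ on $\dG$ exactly as you do (via the Bowditch action on conical points and the coset-translation formula $gg_i = g_j p$ on the blown-up pieces), verifies that $\alpha$ is a group action, and then proves continuity by showing that the canonical surjection $\xi_\infty \colon \nu G \to \dG$ is $G$-equivariant, so that $\alpha(h)\circ\xi_\infty$ is continuous and hence $\alpha(h)$ is continuous because $\xi_\infty$ is a closed surjection from a compact space to a Hausdorff space.

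The direct neighbourhood-basis verification you describe first is not carried out in the paper at all; the Higson-compactification argument you present as an afterthought is the entire proof. One small remark: rather than phrasing it as ``the equivalence relation is $G$-invariant'', the paper checks directly that $\xi$ intertwines the canonical $G$-action on $\nu G$ with the already-defined $\alpha$, splitting into the conical and parabolic cases; this is the same content, but it avoids having to argue separately that the quotient action agrees with the one you wrote down.
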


\subsection{Equivariant corona}
\label{sec:equivariant-corona} Let $X$ be a proper metric space. We
denote by $hX$ the Higson compactification of $X$, and denote by $\nu
X \coloneqq hX\setminus X$ the Higson corona.  A corona $X$ is a pair
$(W,\zeta)$ of a compact Hausdorff space $W$ and a continuous map
$\zeta\colon \nu X\to W$.  A corona $(W,\zeta)$ gives a coarse
compactification $\XbarW\coloneqq hX\cup_\zeta W$, which is obtained by
identifying $x\in \nu X$ with $\zeta(x)\in W$. See \cite[Section
2.2]{boundary} for details.

Let $G$ be a group acting on $X$ by isometries.  A corona $(W,\zeta)$ is
{\itshape $G$-equivariant} if $G$ acts continuously on $\XbarW$ such
that the embedding $X\hookrightarrow \XbarW$ is $G$-equivariant.  We
also say that $\XbarW$ is a {\itshape $G$-equivariant compactification}.




\begin{lemma}
\label{lem:zeta-equivariant}
Let $X$ be a proper metric space, and $G$ be a group acting on $X$ by 
 isometries. Let $(W,\zeta)$ be a corona of $X$. Assume that $G$ also 
 acts on $W$. Then the corona $(W,\zeta)$ is $G$-equivariant if and only if
 so is the map $\zeta \colon \nu X \to W$.
\end{lemma}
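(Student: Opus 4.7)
The plan is to use the canonical description of $\XbarW$ as the pushout $hX \sqcup_{\nu X} W$, together with the functorial extension of isometric actions to the Higson compactification. Let $\pi \colon hX \to \XbarW$ denote the canonical continuous surjection which is the identity on $X$ and equals $\zeta$ on $\nu X$. Because $G$ acts on $X$ by isometries, functoriality of the Higson compactification yields a unique continuous extension of this action to $hX$; uniqueness follows from density of $X$ in $hX$ and Hausdorffness of $hX$. This extended action restricts to a continuous $G$-action on $\nu X$.

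For the ``only if'' direction, suppose $(W,\zeta)$ is $G$-equivariant, so that $G$ acts continuously on $\XbarW$ extending the action on $X$ (and on $W$). For each $g \in G$, the maps $\pi \circ g$ and $g \circ \pi$ from $hX$ to $\XbarW$ are both continuous and agree on the dense subset $X$, since both restrict there to the $g$-action $X \to X \hookrightarrow \XbarW$. Since $\XbarW$ is Hausdorff, they coincide on all of $hX$. Therefore $\pi$ is $G$-equivariant, and restricting to $\nu X$ shows $\zeta$ is $G$-equivariant.

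For the ``if'' direction, suppose $\zeta \colon \nu X \to W$ is $G$-equivariant. View $\XbarW$ as the quotient of $hX \sqcup W$ by the equivalence relation identifying $x \in \nu X$ with $\zeta(x) \in W$, and let $q \colon hX \sqcup W \to \XbarW$ be the quotient map. For each $g \in G$, the map $g \colon hX \sqcup W \to hX \sqcup W$ is continuous, and equivariance of $\zeta$ guarantees that it descends to a well-defined map $\bar{g} \colon \XbarW \to \XbarW$: indeed, if $x \sim \zeta(x)$ then $g\cdot x \sim \zeta(g\cdot x) = g \cdot \zeta(x)$. Continuity of $\bar g$ follows from the universal property of the quotient applied to the continuous composition $q \circ g$. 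Since $\bar{g} \circ \overline{g^{-1}}$ is the identity, each $\bar g$ is a homeomorphism, giving a continuous $G$-action on $\XbarW$ that restricts to the prescribed actions on $X$ and $W$, so $(W,\zeta)$ is $G$-equivariant.

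There is no substantial obstacle here; the lemma is purely formal. The only points requiring care are the uniqueness of the extension of the action to $hX$ (handled by density plus Hausdorffness) and the continuity of the quotient action in the ``if'' direction (handled by the universal property of the quotient map $q$). Both are standard, so the proof is essentially a bookkeeping exercise organizing the pushout description of $\XbarW$.
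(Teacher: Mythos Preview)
Your proof is correct and follows essentially the same approach as the paper's. Both directions rely on the quotient/pushout description of $\XbarW$ and the functorial extension of the $G$-action to $hX$. The only cosmetic difference is in the ``only if'' direction: the paper chooses a net in $X$ converging to a given point of $\nu X$ and tracks its images, whereas you invoke directly that two continuous maps into a Hausdorff space agreeing on a dense subset must coincide; these are the same argument in different clothing.
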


\begin{proof}
We denote, respectively, by $\alpha^\nu$ and $\alpha$ the action of $G$ 
 on $\nu X$ and $W$. 
We also denote by $\bar{\zeta}$ an extension of $\zeta$ to $hX$ 
by the identity,  that is, 
$\bar{\zeta}\coloneqq \mathrm{id}\cup \zeta \colon hX\to \XbarW$.

We suppose that $\zeta$ is $G$-equivariant. We note that $\XbarW$ has 
a quotient topology induced by $\bar{\zeta}$. Since the action $\alpha^\nu$ 
is continuous, so is the action $\alpha$.
Thus $(W,\zeta)$ is $G$-equivariant.
 
Conversely, we suppose that a corona $(W,\zeta)$ is $G$-equivariant.
 Let $x^\nu \in \nu X$. We choose a sequence 
 $\{x_\lambda\}_{\lambda\in \Lambda}$ in $X$ which converges to $x^\nu$. 
 Set $x\coloneqq \zeta(x^\nu)$. 
Then $x_\lambda$ converges to $x$ in $\XbarW$, 
 since $\bar{\zeta} \colon hX\to \XbarW$ is continuous.
Now let $g\in G$. 
Since the action $\alpha^\nu$ is continuous, 
 $g\cdot x_\lambda$ converges to $\alpha^\nu(g)\cdot x$ in $hX$.
 By the same reason, $g\cdot x_\lambda$ converges to $\alpha(g)\cdot x$ in 
 $\XbarW$. Therefore we have 
 $\zeta(\alpha^\nu (g)\cdot x^\nu) = \alpha(g)\cdot x$.
\end{proof}

\subsection{Action on the blown-up corona}
\label{sec:action-blown-up}
 Let $G$ be a group that is hyperbolic relative to a finite family of
infinite subgroups $\famP = \{P_1,\dots, P_k\}$. Suppose that for all
$i\in \{1,\dots,k\}$, each $P_i$ has $P_i$-equivariant corona $\partial
P_i$. We set $\overline{P}_i\coloneqq P_i \cup \partial P_i$.

We define isometries $\ciso_i \colon P_{(i)} \to g_iP_{(i)}$ by left
multiplication of $g_i$, that is, $\ciso_i(p) \coloneqq g_ip$ for 
$p\in P_{(i)}$. Then we have canonical continuous extensions 
$\bar{\ciso}_i \colon \overline{P}_{(i)} \to g_i\overline{P}_{(i)}$.

For each $i\in \N$, we define maps $\phi_i\colon G \to \N$ and 
$\psi_i\colon G \to P_{(i)}$ as follows.
For any $h\in G$, there exists a unique integer $\phi_i(h)$ such that 
\[
 hg_iP_{(i)} = g_{\phi_i(h)}P_{(i)}.
\]
Then we set $\psi_i(h)\coloneqq g_{\phi_i(h)}^{-1}hg_i$.

\begin{remark}
\label{rem:phi-modulo}
For $i\in \N$ and $h\in G$, we have that $\psi_i(h)$ lies on $P_{(i)}$ and
$(i) = (\phi_i(h))$.
\end{remark}

We have the following commutative diagram.
\begin{align*}
 \xymatrix{
g_iP_{(i)}\ar[r]^{h\cdot} \ar[d]_{\ciso_i^{-1} = g_i^{-1}\cdot}
 & g_{\phi_i(h)}P_{(i)}\\
P_{(i)}\ar[r]^{\psi_i(h)\cdot}
 & P_{(i)}  
 \ar[u]_{\ciso_{\phi_i(h)} = g_{\phi_i(h)}\cdot} 
}
\end{align*}
Here, upper and lower horizontal arrows are multiplications of 
$h$ and $\psi_i(h)$, respectively, from the left.

\begin{lemma}
\label{lem:psi-homlike}
 Let $i \in \N$ and $h,k\in G$. Set $j\coloneqq \phi_i(k)$. Then we have 
 $\phi_j(h) = \phi_i(hk)$.
\end{lemma}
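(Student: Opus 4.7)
The plan is to unwind the definitions of $\phi_i$ and $\phi_j$, compute the coset $(hk)g_iP_{(i)}$ in two different ways using associativity of the action, and then invoke uniqueness of the order of cosets to equate indices.

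First I would apply the definition of $\phi_i(k)$ to rewrite
\[
 kg_iP_{(i)} = g_{\phi_i(k)}P_{(\phi_i(k))} = g_jP_{(j)},
\]
noting by \cref{rem:order-ij} (and the fact that $(j)=(i)$ implicit in the setup) that this is consistent with the indexing. Next I would apply the definition of $\phi_j(h)$ to the coset $g_jP_{(j)}$, obtaining
\[
 h g_jP_{(j)} = g_{\phi_j(h)}P_{(\phi_j(h))}.
\]
Combining these two identities, and using associativity of the left multiplication of $G$ on itself,
\[
 (hk)g_iP_{(i)} = h\bigl(kg_iP_{(i)}\bigr) = h g_jP_{(j)} = g_{\phi_j(h)}P_{(\phi_j(h))}.
\]

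On the other hand, applying the definition of $\phi_i$ directly to the element $hk\in G$ gives
\[
 (hk)g_iP_{(i)} = g_{\phi_i(hk)}P_{(\phi_i(hk))}.
\]
Thus $g_{\phi_j(h)}P_{(\phi_j(h))} = g_{\phi_i(hk)}P_{(\phi_i(hk))}$ as cosets in $G$. Since $\{g_n\}_{n\in\N}$ is an order of the cosets of $(G,\famP)$ in the sense of \cref{def:order-of-cosets}, the map $n\mapsto g_nP_{(n)}$ is a bijection from $\N$ onto $\bigsqcup_{r=1}^k G/P_r$, so the two indices must agree: $\phi_j(h) = \phi_i(hk)$.

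The argument is purely definitional; there is no real obstacle beyond carefully keeping track of the indices $(i)$, $(j)$, $(\phi_i(h))$ modulo $k$, which is handled by \cref{rem:phi-modulo} applied to both $\phi_i(k)$ and $\phi_j(h)$ and ensures the peripheral subgroups on the two sides of each identity match.
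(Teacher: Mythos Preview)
Your argument is correct and essentially the same as the paper's: both compute that the coset $(hk)g_iP_{(i)}$ coincides with $g_{\phi_j(h)}P_{(j)}$ and with $g_{\phi_i(hk)}P_{(i)}$, then invoke the injectivity of the indexing $n\mapsto g_nP_{(n)}$ (the paper phrases this via \cref{rem:order-ij}, you via the bijection in \cref{def:order-of-cosets}). The only cosmetic difference is that the paper works with the element $hkg_i$ and shows it lies in both cosets, whereas you manipulate the cosets directly.
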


\begin{proof}
 By Remark~\ref{rem:phi-modulo}, both of $g_{\phi_j(h)}^{-1}hg_j$ 
 and $g_{\phi_i(k)}^{-1}kg_i$ lie on $P_{(j)} = P_{(i)}$. So we have
 \[
  g_{\phi_j(h)}^{-1}hkg_i = g_{\phi_j(h)}^{-1}hg_j g_j^{-1}kg_i 
 \in P_{(i)} = P_{(j)}.
 \]
 Thus we have
 \[
  hkg_i \in g_{\phi_j(h)}P_{(j)}\cap g_{\phi_i(hk)}P_{(i)} \neq \emptyset.
 \]
 Therefore $\phi_j(h) = \phi_i(hk)$ by \cref{rem:order-ij}.
\end{proof}


Let $\dG$ be the blown-up corona defined in 
\cite[Definition 7.13.]{boundary}.
Let $\pi_n \colon \dX_{n+1} \to \dX_{n}$ 
be the projection defined in the proof of 
~\cite[Proposition 7.14.]{boundary}.
Set $\pi_0^n\coloneqq \pi_0\circ \dots \circ \pi_n$. 
Then we have a continuous map 
$\pi\coloneqq \varprojlim \pi_0^n \colon \dG \to \dGP$.

For a parabolic points $s_i$ of $g_iP_{(i)}$ in $\dGP$, we have
$\pi^{-1}(s_i) = \partial (g_iP_{(i)})$. 
We set $\Omega \coloneqq \bigcup_{i\in \N} \partial(g_iP_{(i)})$ and 
$\Lambda\coloneqq \dG \setminus \Omega$.  
Then $\Omega = \pi^{-1}(\{s_i:i\in \N\})$ and 
the restriction of $\pi$ to $\Lambda$ is a bijection onto 
$\Lambda_0 \coloneqq \dGP \setminus \{p_i:i\in \N\}$, 
thus we identify $\Lambda$ with $\Lambda_0$.

Now we define a group action $\alpha$ of $G$ 
on $\dG$. Let $\alpha_0$ be the action of $G$ on $\dGP$.
We fix $h\in G$. For $x\in \Lambda$, we define
$\alpha(h) \cdot x\coloneqq \alpha_0(h) \cdot x$.
Next, for $i\in \N$ and $x\in \partial(g_iP_{(i)}) \subset \Omega$, 
we define 
\begin{align*}
 \alpha(h)\cdot x\coloneqq 
   \bar{\ciso}_{\phi_i(h)}(\psi_i(h)\cdot \bar{\ciso}_i^{-1}(x)).
\end{align*}
Here $\psi_i(h)\cdot \bar{\ciso}_i^{-1}(x)$ is an action of 
$\psi_i(h) \in P_{(i)}$ to a point $\bar{\ciso}_i^{-1}(x)$ 
in $\partial P_{(i)}$.

\begin{lemma}
 For any $h,k \in G$ and $x\in \dG$,
 we have $\alpha(h)\cdot (\alpha(k)\cdot x) = \alpha(hk)\cdot x$.
\end{lemma}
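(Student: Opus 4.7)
The plan is to split into cases according to whether $x$ lies in $\Lambda$ or in $\Omega$, and then reduce the associativity check to the bookkeeping lemma \cref{lem:psi-homlike} together with an elementary cocycle identity $\psi_j(h)\psi_i(k) = \psi_i(hk)$.

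First, for $x \in \Lambda$, the action coincides with the restriction of $\alpha_0$ on $\dGP \setminus \{s_i : i \in \N\}$. Since the $G$-action on $\dGP$ permutes the parabolic points $\{s_i\}$ (each $s_i$ is the unique Gromov boundary point of $\Horo(g_iP_{(i)})$, which is sent by $h$ to the corresponding boundary point of $\Horo(hg_iP_{(i)}) = \Horo(g_{\phi_i(h)}P_{(i)})$), the subset $\Lambda_0$ is $G$-invariant and $\alpha(h)\cdot(\alpha(k)\cdot x) = \alpha_0(hk)\cdot x = \alpha(hk)\cdot x$ follows from $\alpha_0$ being an action.

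For $x \in \Omega$, say $x \in \partial(g_iP_{(i)})$, I would first verify that $\alpha(k)\cdot x \in \partial(g_j P_{(j)})$ where $j \coloneqq \phi_i(k)$: by definition $\alpha(k)\cdot x = \bar{\ciso}_j(\psi_i(k) \cdot \bar{\ciso}_i^{-1}(x))$, and $\bar{\ciso}_i^{-1}(x) \in \partial P_{(i)} = \partial P_{(j)}$ (using \cref{rem:phi-modulo} that $(i) = (j)$), so $\psi_i(k)$ acts on it as an element of $P_{(j)}$ and $\bar{\ciso}_j$ sends the result into $\partial(g_jP_{(j)})$. Setting $m \coloneqq \phi_j(h)$ and applying the definition once more,
\begin{align*}
\alpha(h)\cdot(\alpha(k)\cdot x)
&= \bar{\ciso}_{m}\bigl(\psi_j(h)\cdot \bar{\ciso}_j^{-1}(\alpha(k)\cdot x)\bigr)\\
&= \bar{\ciso}_{m}\bigl(\psi_j(h)\cdot \psi_i(k) \cdot \bar{\ciso}_i^{-1}(x)\bigr).
\end{align*}
On the other hand, $\alpha(hk)\cdot x = \bar{\ciso}_{\phi_i(hk)}(\psi_i(hk)\cdot \bar{\ciso}_i^{-1}(x))$. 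By \cref{lem:psi-homlike}, $\phi_i(hk) = \phi_j(h) = m$, so the two sides will agree provided $\psi_j(h)\psi_i(k) = \psi_i(hk)$ in $P_{(i)}$. This last identity is a direct computation from the definitions:
\[
\psi_j(h)\psi_i(k) = (g_{m}^{-1} h g_j)(g_{j}^{-1} k g_i) = g_{m}^{-1} hk g_i = g_{\phi_i(hk)}^{-1}(hk) g_i = \psi_i(hk).
\]

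There is no serious obstacle here; the only slight subtlety is keeping the index shifts $i \mapsto \phi_i(k) \mapsto \phi_{\phi_i(k)}(h)$ consistent with $i \mapsto \phi_i(hk)$, which is exactly the content of \cref{lem:psi-homlike} and already dealt with there. Thus the verification reduces to two short algebraic identities plus the observation that $\alpha_0$ is a group action, and the lemma follows.
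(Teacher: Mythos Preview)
Your proof is correct and follows essentially the same approach as the paper: both split into the cases $x\in\Lambda$ and $x\in\Omega$, and in the second case reduce to \cref{lem:psi-homlike} together with the identity $\psi_j(h)\psi_i(k)=\psi_i(hk)$, which the paper also verifies by the same direct expansion $g_{\phi_j(h)}^{-1}hg_j\cdot g_j^{-1}kg_i = g_{\phi_i(hk)}^{-1}(hk)g_i$. The only difference is presentational: you isolate the cocycle identity for $\psi$ as a separate step and add a word on why $\Lambda$ is $G$-invariant, whereas the paper does the whole computation in a single chain of equalities.
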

\begin{proof}
 Let $h,k \in G$. Let $x\in \dG$. 
 First, we suppose $x\in \Lambda$. 
 Since $\alpha(k)\cdot x = \alpha_0(k)\cdot x \in \Lambda_0 = \Lambda$, 
 we have $\alpha(h)\cdot (\alpha(k)\cdot x) = \alpha(hk)\cdot x$. 

Next, we suppose $x\in \partial(g_iP_{(i)})$ for some $i\in \N$.
Set $j\coloneqq\phi_i(k)$. Then by Remark~\ref{rem:phi-modulo}, we have $(j) = (i)$.
Now we have
\begin{align}
 \alpha(h)\cdot (\alpha(k)\cdot x)
 &= \alpha(h)\cdot 
    (\bar{\ciso}_{\phi_i(k)}\circ \psi_i(k)\circ \bar{\ciso}_i^{-1}(x))
    \nonumber\\
 &= \bar{\ciso}_{\phi_j(h)}\circ \psi_j(h)\circ \bar{\ciso}_j^{-1}
    \circ \bar{\ciso}_{j}\circ \psi_i(k)\circ \bar{\ciso}_i^{-1}(x)
    \nonumber\\
 &= \bar{\ciso}_{\phi_j(h)}\circ \psi_j(h)\circ 
    \psi_i(k)\circ \bar{\ciso}_i^{-1}(x)
    \nonumber\\
 &= \bar{\ciso}_{\phi_j(h)}\circ 
 (g_{\phi_j(h)}^{-1}hg_{j} g_{j}^{-1}kg_i) \circ \bar{\ciso}_i^{-1}(x)
   \nonumber\\
 &= \bar{\ciso}_{\phi_j(h)}\circ 
 (g_{\phi_j(h)}^{-1}hkg_i) \circ \bar{\ciso}_i^{-1}(x)
   \nonumber\\
 &= \bar{\ciso}_{\phi_i(hk)}\circ \label{eq:a}
 (g_{\phi_i(hk)}^{-1}(hk)g_i) \circ \bar{\ciso}_i^{-1}(x)
   \nonumber\\
 &= \alpha(hk)\cdot x. \nonumber
\end{align}
 Here, the second equality from below holds by 
 Lemma~\ref{lem:psi-homlike}. 
\end{proof}

Now we have shown that the action $\alpha$ is well-defined.  We will
show that it is continuous.  By~\cite[Proposition 7.14.]{boundary},
there exists a canonical surjection 
$\xi_\infty\colon \nu G \to \dG$. 
For simplicity, we abbreviate $\xi_\infty$ by $\xi$.
\begin{lemma}
\label{lem:commute}
The map $\xi$ is $G$-equivariant.
\end{lemma}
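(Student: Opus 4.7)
The plan is to prove $G$-equivariance of $\xi=\xi_\infty$ by unfolding its construction as a projective limit of partial blow-up maps and then tracking carefully how left multiplication by $h\in G$ interacts with each level of the projective system. First I would recall that left multiplication by $G$ on itself is by isometries of the word metric, hence extends continuously to the Higson compactification $hG$, to $\nu G$, to the Bowditch boundary $\dGP$, and (by the definition of $\alpha$ just verified above) to each partial blow-up $\dX_n$ in the projective system whose limit is $\dG$. Since $\xi=\varprojlim \xi_n$ where $\xi_n\colon\nu G\to\dX_n$, and since a projective limit of continuous equivariant maps is continuous and equivariant, it suffices to show that every $\xi_n$ is $G$-equivariant.

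The base case $\xi_0\colon\nu G\to\dGP$ is standard: it factors the canonical continuous surjection $hG\to \overline{X}(G,\mathbb{P},\mathcal{S})\to\dGP$ provided by the theory of relatively hyperbolic groups, and all these maps are $G$-equivariant. For the inductive step, I would reduce to a single blow-up replacing the parabolic point $s_i\in \dGP$ by $\partial(g_iP_{(i)})$, and split the verification according to whether a point $x^\nu\in\nu G$ satisfies $\xi(x^\nu)\in\Lambda$ or $\xi(x^\nu)\in\partial(g_iP_{(i)})\subset\Omega$. The case $\xi(x^\nu)\in\Lambda$ is immediate since $\xi$ and the projection $\pi\circ\xi$ into $\dGP$ agree there, and the $G$-action on $\Lambda$ is defined by $\alpha(h)=\alpha_0(h)$.

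For the case $\xi(x^\nu)\in\partial(g_iP_{(i)})$, I would pick a net $\{x_\lambda\}\subset G$ converging to $x^\nu$ in $hG$. The explicit description of $\xi_\infty$ in \cite[Section 7.2.]{boundary}, built from the maps $F_i$ and $L_i$ with extensions $\bar{L}_i\colon\hat P_i\cup W_i\to g_iP_{(i)}\cup W_i$, implies that the ``coset coordinate'' $\bar\ciso_i^{-1}(\bar L_i$-image of $x_\lambda)\in \overline{P}_{(i)}$ converges in $\overline{P}_{(i)}$ to $\bar\ciso_i^{-1}(\xi(x^\nu))\in \partial P_{(i)}$. Applying the computation that $h\cdot(g_ip)=g_{\phi_i(h)}\psi_i(h)p$ for $p\in P_{(i)}$ (precisely the identity used to define $\alpha$ on $\Omega$), together with the fact that $h$ sends $g_iP_{(i)}$ to $g_{\phi_i(h)}P_{(i)}$ as an isometry intertwining $\ciso_i$ and $\ciso_{\phi_i(h)}\circ\psi_i(h)$, and the assumed $P_{(i)}$-equivariance of $\partial P_{(i)}$, I would then conclude that $\xi(h\cdot x^\nu)=\bar\ciso_{\phi_i(h)}(\psi_i(h)\cdot\bar\ciso_i^{-1}(\xi(x^\nu)))=\alpha(h)\cdot \xi(x^\nu)$.

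The main obstacle I expect is the precise passage, along a convergent net $x_\lambda\to x^\nu$ in $hG$ with $\xi(x^\nu)\in\partial(g_iP_{(i)})$, from the abstract projective-limit construction of $\xi$ to the concrete statement that the coset coordinate of $x_\lambda$ in $\overline{P}_{(i)}$ converges to $\bar\ciso_i^{-1}(\xi(x^\nu))$; this demands unpacking the behaviour of $F_i$, $L_i$, and the gluing that defines $\dX_{n+1}\to\dX_n$ at the blow-up step. Once this is in hand, the equivariance follows mechanically from Remark~\ref{rem:phi-modulo} and Lemma~\ref{lem:psi-homlike}, which package exactly the cocycle identity relating $\phi_i,\psi_i$ to the group law.
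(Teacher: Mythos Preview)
Your overall strategy matches the paper's: split according to whether $\xi(x^\nu)\in\Lambda$ or $\xi(x^\nu)\in\Omega$, dispatch the $\Lambda$ case via the $G$-equivariance of $\xi_0$, and for $\Omega$ use the factorisation $h|_{g_iP_{(i)}}=\ciso_{\phi_i(h)}\circ\psi_i(h)\circ\ciso_i^{-1}$ together with the $P_{(i)}$-equivariance of $\partial P_{(i)}$.

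Where you diverge is in the execution of the $\Omega$ case, and here the paper's route is cleaner and avoids exactly the ``main obstacle'' you flag. Instead of choosing a net $x_\lambda\to x^\nu$ and arguing about convergence of coset coordinates in $\overline{P}_{(i)}$, the paper works entirely at the level of the Higson corona. Since $\ciso_i$, $\ciso_{\phi_i(h)}$, and left multiplication by $\psi_i(h)$ are all isometries, they induce maps $\nu\ciso_i^{-1}$, $\nu\ciso_{\phi_i(h)}$, $\nu\psi_i(h)$ on the relevant pieces of $\nu G$, and one obtains a commutative diagram whose top row is $\alpha^\nu(h)$, whose bottom row is $\psi_i(h)$ acting on $\partial P_{(i)}$, and whose vertical maps are built from $\xi$ and the $\nu\ciso$'s, $\bar\ciso$'s. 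The outer triangles commute by naturality of $\xi$ under isometries; the inner square commutes precisely because $\partial P_{(i)}$ is a $P_{(i)}$-equivariant corona, which by Lemma~\ref{lem:zeta-equivariant} says that $\xi$ intertwines $\nu\psi_i(h)$ with $\psi_i(h)$. Chasing the diagram gives $\xi(\alpha^\nu(h)\cdot x^\nu)=\alpha(h)\cdot\xi(x^\nu)$ directly.

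This buys you two things: you never need to unpack $F_i$, $L_i$, or the projective-limit gluing, and you never need to pass through the intermediate $\xi_n$'s (the paper argues directly with $\xi=\xi_\infty$, not by induction on $n$). Your net argument would eventually arrive at the same place, but the diagrammatic version is what makes the proof short.
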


\begin{proof}
 Let $\xi_0\colon \nu \Xaug \to \dGP$ be a map defined 
 in \cite[Section 7.2]{boundary}.
Since $\xi_0$ is $G$-equivariant,
The restriction of $\xi$ on $\xi^{-1}(\Lambda)$ is $G$-equivariant.
%
%
We suppose $h\in G$ and $x\in \xi^{-1}(\Omega)$. Then we have 
$x\in \xi^{-1}(\partial (g_iP_{(i)}))$ for some $i\in \N$.
Set $j\coloneqq \phi_i(h)$. 
We denote by $\alpha^\nu$ the action of $G$ on the Higson corona $\nu G$.
The following diagram is commutative.
\begin{align*}
 \xymatrix{
 & \ar[dl]_\xi \xi^{-1}( \partial (g_iP_{(i)})) \ar[r]^{\alpha^\nu(h)} 
   \ar[d]_{\nu \ciso_i^{-1}} & \xi^{-1}(\partial (g_{j}P_{(i)}))\ar[rd]^\xi &\\
 \ar[dr]^{\bar{\ciso}_i^{-1}} \partial (g_iP_{(i)})& 
   \xi^{-1}(\partial P_{(i)}) \ar[d]_\xi \ar[r]^{\nu \psi_i(h)}&  
   \xi^{-1}(\partial P_{(i)}) \ar[d]_\xi \ar[u]^{\nu \ciso_j}& 
   \partial (g_jP_{(j)})&\\
 & \partial P_{(i)} \ar[r]^{\psi_i(h)} 
 & \partial P_{(i)} \ar[ru]^{\bar{\ciso}_j} &
 }
\end{align*}
This shows that $\alpha(h)(\xi(x)) = \xi(\alpha^\nu(h)(x))$.
\end{proof}

\begin{corollary}
 \label{cor:alpha-conti}
 For $h\in G$, the map $\alpha(h)$ is continuous.
\end{corollary}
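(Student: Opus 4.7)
The plan is to deduce continuity of $\alpha(h)$ as an immediate consequence of \cref{lem:commute} together with \cref{lem:hX-Y}. The point is that the blown-up corona $\dG$ carries the quotient topology from the Higson corona $\nu G$ via the continuous surjection $\xi\colon \nu G \to \dG$, so continuity on $\dG$ can be tested by precomposing with $\xi$.

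First I would verify that the hypotheses of \cref{lem:hX-Y} are satisfied with $Z = \nu G$, $X = \dG$, and the surjection being $\xi$. Since $\nu G$ is the Higson corona of the proper coarse space $G$, it is compact. Since $\dG$ is a corona of $G$ (i.e.\ the complement of $G$ inside a Hausdorff coarse compactification of $G$), it is Hausdorff. Finally, $\xi$ is the continuous surjection produced in \cite[Proposition 7.14]{boundary}.

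Next, using \cref{lem:commute}, we have the equality
\begin{equation*}
\alpha(h)\circ \xi \;=\; \xi\circ \alpha^\nu(h)
\end{equation*}
of maps $\nu G \to \dG$. The right-hand side is a composition of two continuous maps, so it is continuous. Therefore the composite $\alpha(h)\circ \xi$ is continuous, and by \cref{lem:hX-Y} applied to $f = \alpha(h)$, the map $\alpha(h)\colon \dG \to \dG$ is continuous.

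There is essentially no obstacle here, since all the substantive work has already been done: constructing the continuous surjection $\xi$ from $\nu G$, establishing $G$-equivariance of $\xi$ in \cref{lem:commute} (which is the step that required the careful bookkeeping with $\phi_i$, $\psi_i$, and $\rho_i$), and isolating the quotient-topology principle in \cref{lem:hX-Y}. With these in hand, \cref{cor:alpha-conti} is a one-line diagram chase. The only thing worth remarking is that this argument handles every $h\in G$ uniformly, so combined with the group-action axioms already verified, it yields a continuous $G$-action on $\dG$ extending the action on $G$, which is exactly what is required for \cref{prop:equivariant-corona} via \cref{lem:zeta-equivariant}.
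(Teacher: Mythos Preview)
Your proof is correct and follows exactly the paper's approach: use \cref{lem:commute} to see that $\alpha(h)\circ\xi = \xi\circ\alpha^\nu(h)$ is continuous, then apply \cref{lem:hX-Y} with $Z=\nu G$ compact and $X=\dG$ Hausdorff to conclude that $\alpha(h)$ is continuous. The paper's proof is the same two-line argument, just without spelling out the verification of the hypotheses of \cref{lem:hX-Y}.
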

\begin{proof}
 By \cref{lem:commute}, $\alpha(h)\circ\xi$ is continuous, 
 so by \cref{lem:hX-Y}, $\alpha(h)$ is continuous.
\end{proof}

Proposition~\ref{prop:equivariant-corona} follows by 
\cref{lem:zeta-equivariant,lem:commute,cor:alpha-conti}.

\appendix
\crefname{theorem}{Thm}{Thm}
\crefname{lemma}{Lem.}{Lem.}
\crefname{corollary}{Corollary}{Corollaries}
\crefname{proposition}{Proposition}{Propositions}
\crefname{remark}{Rem.}{Rem.}
\crefname{example}{Example}{Examples}
\crefname{section}{Section}{Sections}
\crefname{definition}{Def.}{Def.}
\crefname{equation}{}{}
\crefname{claim}{Claim}{Claims}

\newpage
\section{Table of symbols}
\label{sec:table-symbols}
\begin{table}[h]
 \centering

\begin{tabular}{c|c|c}
\hline
\multicolumn{3}{c}{~\cref{sec:introduction}}\\
\hline
  $\dGP$ & Bowditch boundary & \cref{th:G-to-B-full}\\
\hline
\multicolumn{3}{c}{~\cref{sec:higs-comp}}\\
\hline
 \BM{}& condition related to \calZ-structures & \cref{def:BMconditions}\\ 
 local-\BM{}& localization of \BM{} & \cref{def:BMconditions}\\ 
\hline
\multicolumn{3}{c}{~\cref{sec:relat-hyperb-groups}}\\
\hline
  $\Horo(P)$ & combinatorial horoball  & \cref{def:Horoball}\\
  $\Xaug $ & augmented space & \cref{notation:1}\\
  $(i,g,l)$ & coordinate on $\Horo(g_iP_{(i)})$ & \cref{rem:vertices}\\
\hline
\multicolumn{3}{c}{~\cref{subsec:setting}}\\
\hline
  $\Vaug $& vertex set of $\Xaug$ & \cref{def:notationA}\\
  $V(\EP)$& vertex set of $\EP$ & \cref{def:notationA}\\
  $\Delta_P,e_P$& fundamental domain of $P$ on $\EP$ & \cref{def:notationA}\\
  $e_P$& base point $e_P\in \Delta_P$& \cref{def:notationA}\\
  $\iota_P$& map $P\to V(\EP)$ $g\mapsto g\cdot e_P$& \cref{def:notationA}\\
  $V_P$& map $\EP\to P$ $q\mapsto g$ s.t. $q\in g\Delta_P$ & \cref{def:notationA}\\
  $E_P(K^{0})$& $\cup_{g\in K^{0}} \, (g\cdot\Delta_P)$& \cref{def:notationA}\\
  $s_P$& parabolic point of $\Horo(P)$ & \cref{def:notationA}\\
  $W_P$& corona of $P$ & \\ 

  $\iota_i$ & conjugate of $\iota_{P_{(i)}}$ by $g_i$ & \cref{def:notationB}\\
  $V_i$ & conjugate of $V_P$ by $g_i$ & \cref{def:notationB}\\
  $E_i(K^{0})$ & $\cup_{g\in K^{0}} \, (g\cdot(\Delta_{P_{(i)}}))$ & \cref{def:notationB}\\
  $s_i$& parabolic point of $\Horo(g_iP_{(i)})$ & \cref{def:notationB}\\
  $d_i$ & the restriction of $d_G$ on $g_iP_{(i)}$& \\
\hline
 \end{tabular}
\vspace{4cm}
\end{table}

\begin{table}[h]
\centering
  \begin{tabular}{c|c|c}
\hline
  \multicolumn{3}{c}{~\cref{sec:rips-compl-augm}}\\
 \hline
  $\Vaug_r$ & $\bigsqcup_{i\in \N} (g_iP_{(i)}\times \{n\in \N: n\geq r\})$& \\
  $\Vaug^R$ & $G\sqcup \bigsqcup_{i\in \N} 
             (g_iP_{(i)}\times \{n\in \N: 1\leq n \leq R\});$ & \\ 
  $\Vaug^R$& $\Vaug_r \cap \Vaug^R$& \\ 
  $\Rip(\Vaug)_r$ & full subcomplex of $\Vaug_r$& \\ 
  $\Rip(\Vaug)^R$ & full subcomplex of $\Vaug_R$& \\ 
  $\Rip(\Vaug)_r^R$ & full subcomplex of $\Vaug_r^R$& \\
  $f_P$ & $P$-homotopy equivalence from $\Rip^{(n)}(V(P,\{P\},d_P))_r$ to $\EP$& \cref{eqdef:f_P}\\ 
  $f_i$ & conjugate of $f_P$ by $g_i$ & \cref{def:fi}\\ 
  $V[i]$ & abbreviation of $V(g_iP_{(i)},\{g_iP_{(i)}\},d_i)$& \\ 
  $\VaugIR{i}$ & $ \Vaug \setminus \{(i,g,l):g\in g_iP_{(i)}, l>R\}$ & \cref{eqdef:VaugIRi}\\ 
  $\Rip(\VaugIR{i})$ & full subcomplex of $\VaugIR{i}$ & \\ 
  $\ERX[i]$ & $\Rip(\VaugIR{i}) \cup  \left\{\Rip(V[i])_r^R\times[0,1] \cup_{f_i} g_i\EP_{(i)} \right\}$& \cref{def:ERX}\\
  $V_{[i]}$& map $\ERX[i]\to \VaugIR{i}$ & \cref{eqdef:mapV_bib}\\
  $\partial X[i]$ & blown-up corona of $X[i]$ & \cref{eqdef:dXbib}\\
  $\hat{P}_i$ & $\dGP \setminus \{s_i\}$ & \\
  $\RDc(U)$& neighbourhood of a conical limit point& \\
  $\RDp(U)$& neighbourhood of a parabolic point& \\
 \hline
  \multicolumn{3}{c}{~\cref{sec:proof-theor-refthm:b}}\\
 \hline
  $N(i,L,\epsilon)$& $\epsilon$-neighbourhood of $L$ in $g_iP_{(i)}$& \\
  $\EN(i,L,\epsilon)$& $\bigcup_{g\in N(i,L,\epsilon)} g\Delta_{P_{(i)}}$ & \cref{eqdef:EN}\\
\hline
  \multicolumn{3}{c}{~\cref{sec:projective-limit}}\\
 \hline
 $\ERX_n$ & $\Rip(\Vaug)^R \cup 
 \bigcup_{j=1}^{n}
  \left\{\Rip(V[j])_r^R)\times[0,1]
  \cup_{f_j} g_j\EP_{(j)} \right\}$ & \cref{eqdef:ERXn}\\ 
   & $\cup
\bigcup_{j=n+1}^{\infty}
  \left\{\Rip(V[j]_r^R)\times[0,1]
  \cup \Rip(V[j])_r \times\{1\}\right\} $
& \\ 
  $\EG$ & $\ERX_\infty$& \\ 
 $h_j$  & homotopy from the identity to $f_j^{-1}\circ f_j$& \cref{eqdef:h_j}\\
 $\theta_n^m$  & map $\ERX_m\to \ERX_n$& \\ 
  $\theta_n$   &  $\theta_n^\infty$ & \\ 
 $\tau_n$ & homotopy inverse of $\theta_n$& \cref{eqdef:tau_n}\\ 
 $\beta_{n}[i]$  & map $\barERX_n \to \barERX[i]$ & \cref{def:beta_n-i}\\ 
 \end{tabular}

\end{table}

\newpage
\phantom{dummy}
\bibliographystyle{amsplain}
\bibliography{/Users/tomo/Library/tex/math}

\bigskip
\address{ Tomohiro Fukaya \endgraf
Department of Mathematics and Information Sciences, 
Tokyo Metropolitan University,
Minami-osawa Hachioji, Tokyo 1920397, Japan}

\textit{E-mail address}: \texttt{tmhr@tmu.ac.jp}

\end{document}